\def\a{\alpha}
\def\d{\delta}
\def\e{\epsilon}
\def\f{\frac}                          
\def\g{\gamma}
\def\G{\Gamma}
\def\k{\kappa}
\def\lb\{{\left\{}                     
\def\la{\lambda}
\def\La{\Lambda}
\def\lla{\longleftarrow}               
\def\lm{\limits}                       
\def\lra{\longrightarrow}              
\def\dllra{\Longleftrightarrow}        
\def\llra{\longleftrightarrow}         
\def\n{\nabla}
\def\ngth{\negthickspace}              
\def\ngtn{\negthinspace}              
\def\ola{\overleftarrow}               
\def\Om{\Omega}
\def\om{\omega}
\def\op{\oplus}
\def\oper{\operatorname}             
\def\oplm{\operatornamewithlimits}   
\def\ora{\overrightarrow}            
\def\ov{\overline}                   
\def\ova{\overarrow}                 
\def\ox{\otimes}                     
\def\p{\partial}                     
\def\rb\}{\right\}}                  
\def\s{\sigma}
\def\sbq{\subseteq}                  
\def\spq{\supseteq}                  
\def\sqp{\sqsupset}                  
\def\supth{{\text{th}}}              
\def\T{\Theta}
\def\th{\theta}
\def\tkap{\thickapprox}              
\def\tl{\tilde}
\def\tril{\triangleleft}             
\def\thra{\twoheadrightarrow}        
\def\un{\underline}                  
\def\ups{\upsilon}
\def\vp{\varphi}
\def\vt{\vartheta}
\def\wh{\widehat}                    
\def\wt{\widetilde}                  
\def\x{\times}                       
\def\z{\zeta}
\def\({\left(}
\def\){\right)}
\def\[{\left[}
\def\]{\right]}
\def\<{\left<}
\def\>{\right>}
\newcommand{\lng}{\langle}        
\newcommand{\rng}{\rangle}        
\newcommand{\bk}[1]{\langle #1\rangle}
\def\ra{\rightarrow}
\newcommand{\rz}{\raisebox{.2ex}{*}}
\newcommand{\rzc}{\raisebox{.1ex}{\circ}}
\newcommand{\rzf}{\raisebox{.1ex}{f}}
\newcommand{\thom}{\widetilde{\hom}}
\def\tec{Teichm\"uller\ }
\def\sconr{\hbox{\medspace\vrule width 0.4pt height 4.7pt depth
0.4pt \vrule width 5pt height 0pt depth 0.4pt\medspace}}
\def\SA{\mathcal A}
\def\SB{\mathcal B}
\def\SC{\mathcal C}
\def\SD{\mathcal D}
\def\SE{\mathcal E}
\def\SF{\mathcal F}
\def\SG{\mathcal G}
\def\SH{\mathcal H}
\def\SI{\mathcal I}
\def\SJ{\mathcal J}
\def\SK{\mathcal K}
\def\SL{\mathcal L}
\def\SM{\mathcal M}
\def\SN{\mathcal N}
\def\SO{\mathcal O}
\def\SP{\mathcal P}
\def\SQ{\mathcal Q}
\def\SR{\mathcal R}
\def\SS{\mathcal S}
\def\ST{\mathcal T}
\def\SU{\mathcal U}
\def\SV{\mathcal V}
\def\SW{\mathcal W}
\def\SX{\mathcal X}
\def\SY{\mathcal Y}
\def\SZ{\mathcal Z}
\newcommand{\BA}{\ensuremath{\mathbf A}}
\newcommand{\BB}{\ensuremath{\mathbf B}}
\newcommand{\BC}{\ensuremath{\mathbf C}}
\newcommand{\BD}{\ensuremath{\mathbf D}}
\newcommand{\BE}{\ensuremath{\mathbf E}}
\newcommand{\BF}{\ensuremath{\mathbf F}}
\newcommand{\BG}{\ensuremath{\mathbf G}}
\newcommand{\BH}{\ensuremath{\mathbf H}}
\newcommand{\BI}{\ensuremath{\mathbf I}}
\newcommand{\BJ}{\ensuremath{\mathbf J}}
\newcommand{\BK}{\ensuremath{\mathbf K}}
\newcommand{\BL}{\ensuremath{\mathbf L}}
\newcommand{\BM}{\ensuremath{\mathbf M}}
\newcommand{\BN}{\ensuremath{\mathbf N}}
\newcommand{\BO}{\ensuremath{\mathbf O}}
\newcommand{\BP}{\ensuremath{\mathbf P}}
\newcommand{\BQ}{\ensuremath{\mathbf Q}}
\newcommand{\BR}{\ensuremath{\mathbf R}}
\newcommand{\BS}{\ensuremath{\mathbf S}}
\newcommand{\BT}{\ensuremath{\mathbf T}}
\newcommand{\BU}{\ensuremath{\mathbf U}}
\newcommand{\BV}{\ensuremath{\mathbf V}}
\newcommand{\BW}{\ensuremath{\mathbf W}}
\newcommand{\BX}{\ensuremath{\mathbf X}}
\newcommand{\BY}{\ensuremath{\mathbf Y}}
\newcommand{\BZ}{\ensuremath{\mathbf Z}}
\def\Ff{\mathfrak f}
\def\Fg{\mathfrak g}
\def\Fr{\mathfrak r}
\def\Fs{\mathfrak s}
\def\Ft{\mathfrak t}
\def\bba{{\mathbb A}}
\def\bbb{{\mathbb B}}
\def\bbc{{\mathbb C}}
\def\bbd{{\mathbb D}}
\def\bbe{{\mathbb E}}
\def\bbf{{\mathbb F}}
\def\bbg{{\mathbb G}}
\def\bbh{{\mathbb H}}
\def\bbi{{\mathbb I}}
\def\bbj{{\mathbb J}}
\def\bbk{{\mathbb K}}
\def\bbl{{\mathbb L}}
\def\bbm{{\mathbb M}}
\def\bbn{{\mathbb N}}
\def\bbo{{\mathbb O}}
\def\bbp{{\mathbb P}}
\def\bbq{{\mathbb Q}}
\def\bbr{{\mathbb R}}
\def\bbs{{\mathbb S}}
\def\bbt{{\mathbb T}}
\def\bbu{{\mathbb U}}
\def\bbv{{\mathbb V}}
\def\bbw{{\mathbb W}}
\def\bbx{{\mathbb X}}
\def\bby{{\mathbb Y}}
\def\bbz{{\mathbb Z}}
\def\rns{\rz\bbr^n_{nes}}            
\def\rms{\rz\bbr^m_{nes}}            
\def\rps{\rz\bbr^m_{nes}}            
\def\rs{\rz\bbr_{nes}}            
\newcommand{\bsm}[1]{\boldsymbol{#1}}    
\def\ssm{\smallsetminus}
\def\fst{\frak{st}}
\newcommand{\cMn}{$C^\infty(M,\bbr^n)$}
\newcommand{\dcMn}{C^\infty(M,\bbr^n)}
\newcommand{\cM}{$C^\infty(M,\bbr )$}
\newcommand{\dcM}{C^\infty(M,\bbr )}
\newcommand{\cmn}{$C^\infty(\bbr^m,\bbr^n)$}
\newcommand{\cnn}{$C^\infty(\bbr^n,\bbr^n)$}
\newcommand{\dcmn}{C^\infty(\bbr^m,\bbr^n)}
\newcommand{\dcnn}{C^\infty(\bbr^n,\bbr^n)}
\newcommand{\cm}{$C^\infty(\bbr^m,\bbr)$}
\newcommand{\dcm}{C^\infty(\bbr^m,\bbr)}
\newcommand{\cn}{$C^\infty(\bbr^n,\bbr)$}
\newcommand{\dcn}{C^\infty(\bbr^n,\bbr)}
\newcommand{\cnpr}{$C^\infty_{pr}(\bbr^n,\bbr)$}
\newcommand{\cnnpr}{$C^\infty_{pr}(\bbr^n,\bbr^n)$}
\newcommand{\dcnpr}{C^\infty_{pr}(\bbr^n,\bbr)}
\newcommand{\dcnnpr}{C^\infty_{pr}(\bbr^n,\bbr^n)}
\newcommand{\cjkmn}{C^{\infty}(\SJ^k_{m,n})}
\newcommand{\cjkmom}{C^{\infty}(\SJ^k_{m,1},\bbr_m)}
\newcommand{\strcmn}{$\rz C^\infty(\bbr^m,\bbr^n)$}
\newcommand{\dstrcmn}{\rz C^\infty(\bbr^m,\bbr^n)}
\newcommand{\strcm}{$\rz C^\infty(\bbr^m,\bbr)$}
\newcommand{\dstrcm}{\rz C^\infty(\bbr^m,\bbr)}
\newcommand{\strcn}{$\rz C^\infty(\bbr^n,\bbr)$}
\newcommand{\strcnn}{$\rz C^\infty(\bbr^n,\bbr^n)$}
\newcommand{\dstrcn}{\rz C^\infty(\bbr^n,\bbr)}
\newcommand{\dstrcnn}{\rz C^\infty(\bbr^n,\bbr^n)}
\newcommand{\strcnpr}{$\rz C^\infty_{pr}(\bbr^n,\bbr)$}
\newcommand{\dstrcnpr}{\rz C^\infty_{pr}(\bbr^n,\bbr)}
\newcommand{\strcMn}{$\rz C^\infty(M,\bbr^n)$}
\newcommand{\dstrcMn}{\rz C^\infty(M,\bbr^n)}
\newcommand{\strcM}{$\rz C^\infty(M,\bbr)$}
\newcommand{\dstrcM}{\rz C^\infty(M,\bbr)}
\newcommand{\scmn}{$SC^\infty(\bbr^m,\bbr^n)$}
\newcommand{\scnn}{$SC^\infty(\bbr^n,\bbr^n)$}
\newcommand{\scn}{$SC^\infty(\bbr^n,\bbr)$}
\newcommand{\scnnpr}{$SC^\infty_{pr}(\bbr^n,\bbr^n)$}
\newcommand{\dscmn}{SC^\infty(\bbr^m,\bbr^n)}
\newcommand{\dscnn}{SC^\infty(\bbr^n,\bbr^n)}
\newcommand{\dscnnpr}{SC^\infty_{pr}(\bbr^n,\bbr^n)}
\newcommand{\scm}{$SC^\infty(\bbr^m,\bbr)$}
\newcommand{\dscm}{SC^\infty(\bbr^m,\bbr)}
\newcommand{\dscn}{SC^\infty(\bbr^n,\bbr)}
\newcommand{\scMn}{$SC^\infty(M,\bbr^n)$}
\newcommand{\dscMn}{SC^\infty(M,\bbr^n)}
\newcommand{\scM}{$SC^\infty(M,\bbr)$}
\newcommand{\dscM}{SC^\infty(M,\bbr)}
\newcommand{\cgcmn}{$^\s\! C^\infty(\bbr^m,\bbr^n)$}
\newcommand{\cgcm}{$^\s\! C^\infty(\bbr^m,\bbr)$}
\newcommand{\cgcMn}{$^\s\! C^\infty(M,\bbr^n)$}
\newcommand{\tangm}{$C^\infty(T\bbr^m)$}
\newcommand{\tangM}{$C^\infty(TM)$}
\newcommand{\stangM}{$SC^\infty(TM)$}
\newcommand{\tangMnes}{$C^\infty(TM)_{\nes}$}
\newcommand{\strtangm}{$\rz C^\infty(T\bbr^m)$}
\newcommand{\strtangM}{$\rz C^\infty(TM)$}
\newcommand{\tangn}{$C^\infty(T\bbr^n)$}
\newcommand{\strtangn}{$\rz C^\infty(T\bbr^n)$}
\newcommand{\tanf}[1]{$C^\infty(#1^{-1}T\bbr^n)$}
\newcommand{\tanfnes}[1]{$C^\infty(#1^{-1}T\bbr^n)_{nes}$}
\newcommand{\strtanf}[1]{$\rz C^\infty(#1^{-1}T\bbr^n)$}
\newcommand{\dcgcmn}{^\s \!C^\infty(\bbr^m,\bbr^n)}
\newcommand{\dcgcm}{^\s\! C^\infty(\bbr^m,\bbr)}
\newcommand{\dcgcMn}{^\s\! C^\infty(M,\bbr^n)}
\newcommand{\dtangm}{C^\infty(T\bbr^m)}
\newcommand{\dtangM}{C^\infty(TM)}
\newcommand{\dstangM}{SC^\infty(TM)}
\newcommand{\dtangMnes}{C^\infty(TM)_{\nes}}
\newcommand{\dstrtangm}{\rz C^\infty(T\bbr^m)}
\newcommand{\dstrtangM}{\rz C^\infty(TM)}
\newcommand{\dtangn}{C^\infty(T\bbr^n)}
\newcommand{\dstrtangn}{\rz C^\infty(T\bbr^n)}
\newcommand{\dtanf}[1]{C^\infty(#1^{-1}T\bbr^n)}
\newcommand{\dtanfnes}[1]{C^\infty(#1^{-1}T\bbr^n)_{nes}}
\newcommand{\dstrtanf}[1]{\rz C^\infty(#1^{-1}T\bbr^n)}
\newcommand{\cts}[1]{\*C^\infty(T\bbr^#1)}
\newcommand{\RDM}{\SD_M^{\bbr}}
\newcommand{\lRDn}{\SD_n^{\bbr}}
\newcommand{\DM}{\SD_{M}}
\newcommand{\DMs}{\SD_{M,nes}}
\newcommand{\Dn}{\SD_{n}}
\newcommand{\Dns}{\SD_{n,nes}}
\newcommand{\flXt}{$\BF l^X_t$}           
\newcommand{\flXs}{$\BF l^X_s$}
\newcommand{\flYt}{$\BF l^Y_t$}
\newcommand{\flYs}{$\BF l^Y_s$}
\newcommand{\flXd}{$\BF l^X_{\d}$}
\newcommand{\flYd}{$\BF l^Y_{\d}$}
\newcommand{\dflXt}{\BF l^X_t}           
\newcommand{\dflXs}{\BF l^X_s}
\newcommand{\dflYt}{\BF l^Y_t}
\newcommand{\dflYs}{\BF l^Y_s}
\newcommand{\dflXd}{\BF l^X_{\d}}
\newcommand{\dflYd}{\BF l^Y_{\d}}
\newcommand{\norm}[1]{\left\|#1\right\|}
\newcommand{\normk}[1]{\left\|#1\right\|_k}
\newcommand{\normkk}[1]{\left\|#1\right\|_{k+1}}
\newcommand{\normkpl}[1]{\left\|#1\right\|_{k+1}}
\newcommand{\normo}[1]{\left\|#1\right\|_1}
\newcommand{\normz}[1]{\left\|#1\right\|_0}
\newcommand{\normj}[1]{\left\|#1\right\|_j}
\newcommand{\nsnormj}[1]{\rz \left\|#1\right\|_j}
\newcommand{\quo}[1]{\frac{\norm{#1}}{\norm{#1}+1}}
\newcommand{\quok}[1]{\frac{\normk#1}{\normk#1+1}}
\newcommand{\quoj}[1]{\frac{\normj#1}{\normj#1+1}}
\newcommand{\quoke}[1]{\frac{\normk#1}{\normkpl#1}}
\newcommand{\quoje}[1]{\frac{\normj#1}{\normj#1}}
\newcommand{\nsquoj}[1]{\frac{\rz \ngtn\normj#1}{*\ngtn\normj#1+1}}
\newcommand{\smk}[1]{\sum_{k=0}^{\infty}\frac{1}{2^k}#1}
\newcommand{\smj}[1]{\sum_{j=0}^{\infty}\frac{1}{2^j}#1}
\newcommand{\nssmj}[1]{\rz \ngtn\sum_{j=0}^{\infty}\frac{1}{2^j}#1}
\newcommand{\pow}[2]{\ensuremath{$#1^#2$}}
\def\k{\kappa}
\newcommand{\btrn}[1]{\bigtriangledown_#1}
\newcommand{\btrndg}{\bigtriangledown_{\d}(g)}
\newcommand{\trndg}{\raisebox{.1ex}{\bigtriangledown_{\d}(g)}}
\newcommand{\fpd}{\rz f\circ \varphi_{\d}}
\newcommand{\frad}{\frac{1}{\d}}
\newcommand{\pdf}{\psi_{\d} \circ \rz f}
\newcommand{\dsim}{\stackrel{\d}{\sim}}
\newtheorem{theorem}{Theorem}[section]
\newtheorem{lemma}{Lemma}[section]
\newtheorem{definition}{Definition}[section]
\newtheorem{corollary}{Corollary}[section]
\newtheorem*{notation}{Notation}
\theoremstyle{definition}
\newtheorem*{remark}{Remark}
\title{Generalized Solutions of Nonlinear Differential Equations \\ A Nonstandard Jets Approach
  }
\author{Tom McGaffey}
\begin{document}

\begin{abstract}
    Using the rudiments of pde jets theory in a nonstandard setting, we first deepen and extend previous nonstandard existence results for generalized solutions of linear differential equations and second extend the previous results for linear differential equations to a much broader class of nonlinear differential equations.
\end{abstract}

\maketitle

\tableofcontents

\begin{abstract} We extend a recently proved result of Todorov
asserting the existence of generalized solutions of very general
linear partial differential operators. To linear operators whose
symbols vanish only to finite order, we prove existence of solutions
to infinite order on $^\s\bbr^m$. We prove existence of generalized
solutions for nonlinear operators satisfying $^\s\! PCP$, a
condition that implies Todorov's condition in the linear case. In
the conclusion, we prove that generalized solutions on $^\s\bbr^m$
are remarkably abundant .

\end{abstract}









\section{Introduction} In this paper, we extend the  results
   of Todorov \cite{Todorov96} (on the existence of generalized solutions for a general set of differential operators) in two directions.
   If $P$ is a linear partial differential operator of order r, written as $P\in LPDO(r)$, with
   $C^{\infty}$ coefficients, and ${\la}_P$ is its (total) symbol, then Todorov
   proves the existence of generalized solutions $f$ for the equation $\rz P(f)(\rz x)=\rz g(\rz x)$ for all $x\in\bbr^m$ outside $\SZ_{{\la}_P}$  and for quite
   general $g$, where ${\SZ}_h$ denotes the
   $\{x\in\bbr^n:h(x)=0\}$. From a slightly different perspective, Todorov's result says that for a general set of standard $g$,
   there exists internal $f\in\rz\dcm$, such that $(\rz\! P(f)-\rz\! g)|_{^\s\bbr^m}=0$.
   ($^\s\bbr^m$ denotes the standard vectors in the internal vector space $\rz\bbr^m$).
   That is, $\rz P(f)-g$ vanishes pointwise, ie., has $0^{th}$ order contact with $\rz\bbr^n$ at each point of $^\s\bbr^m$. The standard geometry and jet definitions with respect to PDEs (partial differential equations) are recalled in the next section. The first extension of Todorov in this paper is to give a straightforward construction that there exists
   internal smooth maps $f$ such that  $\rz\! P(f)-\rz\! g$ has
   infinite order contact with $\rz\bbr^n$ at all points of $^\s\bbr^m$. (See Theorem \ref{thm:lin eqn,infinite contact
   soln}.) Another way of stating this is that Todorov solves the equations at each standard $x$; here we solve the equations along with the infinite family of integrability differential equations associated to the given differential equation at each such $x$. But the two corollaries carry the critical import of this
   theorem. Corollary \ref{cor: infinite order Todorov} is the
   infinite order direct descendant of Todorov's result. It depends
   on the standard jet space work done in Section \ref{section: standard jet work,prolong and
   rank}. The needed standard statement following from this work  lies in Corollary \ref{lem:symb prolong is
   surj}. The second corollary becomes possible only within the
   perspective of this paper. We can consider those partial differential operators
   whose (total) symbols vanish to some finite order, ie., have any finite order contact with $\rz\bbr^m$ along $^\s\bbr^m$; see Section \ref{section:prolong and
   vanishing}\;. Note here that Todorov only consider the $0^{th}$ order vanishing case. Our Corollary \ref{cor: infinite order solns for finite
   contact} says that as long as the vanishing order of $\rz g$ at
   standard points is controlled by that of the symbol of $\rz P$, we
   can find internal smooth $f$ solving $\rz P(f)-\rz g =0$ to
   infinite order on $^\s\bbr^m$. Such a theorem is unstatable
   within the venue of Todorov's setting. The work in standard
   geometry allowing the proof of this result occurs in section \ref{section:prolong and
   vanishing}; see Corollary \ref{cor:removing finite zeroes}. Parenthetically, it's conceivable that the PDE jet results of Sections \ref{section:prolong and vanishing} and \ref{section: standard jet work,prolong and rank} exist in the literature, but the author could not find them.

   The overwhelming bulk of the work in this paper concerns the linear theory. But, the nonlinear PDE jet framework and NSA fit quite well together, and so the second direction of extension of the result of Todorov is into nonlinear partial differential equations, NLPDEs.  There is
   a well developed theory of nonlinear partial differential equations within the jet bundle framework,
   exemplified in the texts of Pommaret,\cite{Pommaret1978}
  Olver, \cite{Olver1993} and Vinogradov, \cite{VinogradovGeomJetSpNLDE1986}. Nonstandard analysis is as comfortable in this framework as in the linear.
   So, in Section \ref{section: nonlinear work}, we introduce  simple conditions, $PCP$, and
   ${} ^\s PCP$, on the symbols of general NLPDEs of finite order, and give an easy proof of
   existence of generalized solutions in the sense of Todorov for
   those NPDO's satisfying these criteria. We show that Todorov's
   nonvanishing condition on the symbol implies that his $LPDO$'s
   satisfy ${}^\s PCP$. But, our theorem asserts the existence of generalized solutions in the
   far broader nonlinear arena.

    The standard import of these results is yet to be
   worked out. See the conclusion for a curious result on this. We prove a result
   that might appear startling: almost all internal smooth functions are solutions on $^\s\bbr^m$ of any standard differential
   operator that has the zero function as a solution.
   It seems that the work of Baty, etal., \cite{BatyOneDimlGas2007}, might be a useful framing for this. That is, their analysis needs a lot of elbow room on the infinitesimal level to allow adjusting eg., the infinitesimal widths of Heaviside jumps, etc. It seems that the results here might be interpreted as saying that the formal (nonstandard) jet theory of symbols allows such roominess for such empirically motivated adjustments.

   The author relies on a jet bundle framework when some might consider it too
   big a machine for the job.  Yet, from the point of view of
   nonstandard analysis, the jet bundle framework is natural and eg., allows an easy
   generalization of Todorov's result to the nonlinear case. The total and principal symbols  of a differential operator have a natural
   geometric setting which when extended to the nonstandard world
   allows a geometric consideration of generalized solutions and, in fact generalized differential operators vis *smooth symbols.

   Todorov defines his differential equation and  constructs his
   solutions within spaces of generalized functions defined on $\rz\Om$, where $\Om$ an arbitrary
   open subset of $\bbr^m$, and gets his localizable differential
   algebra of generalized functions by `quotienting' out by the
   parts of the $\rz C^{\infty}$ functions defined on
   nonnearstandard points of $\rz \Om$. (Note also his NSA jazzed up version of the constructions of Colombeau,  Oberguggenberger, and company in eg., \cite{TodorVern2008}, of which we will say more later).
   On the other hand, my paper focuses almost exclusively
   on extending Todorov's existence result to more general classes
   of differential operators and very little on a broader analysis
   of his differential algebra of generalized functions. (In a follow up to this paper, we will refine the results appearing here within the aforementioned  nonstandard version of Colombeau's algebra of generalized functions constructed by Todovov and Oberguggenberger, \cite{OT98}).
   Accordingly, our constructions occur on all of $\rz\bbr^m_{nes}$.
   If we restrict to differential operators whose
     finite vanishing order sets don't have infinitely many components
     so that they have no
   nontrivial *limiting behavior at nonnearstandard points,
   the  results here should hold without change for Todorov's
   localizable   differential algebras.

   The
   geometric theory of differential equations and their symmetries,
   as exemplified in eg., Olver, \cite{Olver1993} and Pommaret, \cite{Pommaret1978}.
   is a natural framework within which to integrate the
   generalizing notions of NSA. This is the first of a series of
   papers in which the author intends to attempt a theory of generalized solutions (existence and regularity) and symmetries of
   differential equations within the context of the extensive jet theory.
   Note that although this approach seems to be new,
   there are a growing number of research programs moving beyond classical approaches; see eg., Colombeau, \cite{Colombeau1985},  Oberguggenberger, \cite{Oberguggengerger1992}
   and Rossinger, \cite{Rosinger1987}. For a good overview of the new theories of generalized functions, see eg., Hoskins and Pinto, \cite{HoskinsPintoGeneralizFcns2005}, and for specific  surveys of the obstacles to the construction of
   a nonlinear generalization of distributions and a comparison of
   the characteristics of the these new theories, see
   Oberguggenberger, \cite{Oberguggenberger1995a} and more recently Colombeau, \cite{ColombeauSurvGeneralizFcnsInfinites2006}.
    Note , in particular the flury of work extending the arena of Colombeau algebras into mathematical physics involving diferential geometry and topology that Kunzinger, \cite{Kunzinger2007}, summarizes.

   Further note that Oberguggenberger and
   Todorov, see eg., \cite{OT98}, have shown how much of the
   theoretical foundations of Schwarz type Colombeau algebras can be simplified
   and strengthened within the venue of nonstandard constructions and  recent work of Todorov and coworkers, see eg., \cite{TodorVern2008}, have extended the results with this model. See also Todorov's lecture notes, \cite{TodorovNotes}.
   None of these approaches consider the symbol of the
   differential operator, and the nonstandard extension of its geometric milleu as the primary
   object of study. This is the perspective of the current work. Finally, it seems that nonstandard methods are much more encompassing than the impressive work of the Colombeau school of generalized functions; eg., consider the work of the mathematical physicists working around Baty, eg., see \cite{BatyOneDimlGas2007} and \cite{BatyShockwaveStar2009}. Note, in particular the perspective of Baty, etal on p37 of \cite{BatyOneDimlGas2007} (with respect to the benefits of nonstandard methods)  where they note that the generalized functions of the Columbeau school

\begin{quote}
          ...are not smooth functions and do not support all of the operations of ordinary algebra and calculus, the multiplication of singular generalized functions is accomplished via a weak equality called association. In contrast to such calculations, the objects manipulated in equations (4.4) to (4.13) (and indeed in the following section of this report) are smooth nonstandard functions.
\end{quote}
       In reading their papers, it's clear that their need for ``ordinary algebra'', etc., is critical to their analysis. The author also believes that, here also, having at hand the full capacity of mathematics via transfer straightforwardly allows many of the  constructions of this paper.

\section{Some jet PDE basics and nonstandard variations}

\subsection{Nonstandard analysis}
\subsubsection{Resources}
    Good introductions to nonstandard analysis abound. One might start with the pedestrian tour of its basics in the introduction of the authors dissertation, \cite{McGaffeyPhD}; then get  deeper with the introduction of Lindstr{\o}m, \cite{Lindstrom1988} and follow this with the constructive introduction of Henson, \cite{Henson1997}. There is also Nelson's (axiomatic) internal set theory, a theory with similar goals and achievements to Robinson's (currently superstructure/ultrapower) nonstandard analysis; a good text being that of Lutz and Goze, \cite{LutzGoze1981}. One could also check out strategic outgrowths from these nonstandard schools, eg., the work of Di Nasso and Forti, \cite{DiNassoFortiTopExtens2006}. One might also check out their (and Benci's) constructive survey, \cite{BenciFortiNassoEightfoldPath2006}, of a variety of means to a nonstandard mathematics (which also includes a good introduction). There are yet other approaches to a nonstandard mathematics, notably the Russian  school; for a good example, see \cite{InfinitesAnalyGordonKusraevKutateladzeBk2002}.
\subsubsection{Impressionistic introduction with terminology}
     Let's give an impressionistic introduction to nonstandard mathematics via the (extended) ultrapower constructions.
     There are many motivations for the need of a nonstandard mathematics. To have our real numbers, $\bbr$, embedded in a much more robust object, $\rz\bbr$, with the properties of the real numbers, but also containing infinite and infinitesimal quantities is a boon to a direct formalization of intuitive strategies. Model theoretically, these have been around for more than 60 years (some would argue much longer) via  eg., ultrapowers or the compactness theorem. 
     
     The ultrapower  is the construction generally least involved with theoretical matters of the foundations of math (but see \cite{DiNassoFortiTopExtens2006}). Thinking of eg., infinite numbers as limiting properties of sequences of real numbers, one might attempt to construct nonstandard real numbers as equivalence classes of such sequences, ie., $\rz\bbr=\bbr^\bbn/\sim$ where $/\!\sim$ denotes the forming of such equivalence classes. Clearly, one can extend the operations and relations on $\bbr$ to $\bbr^\bbn$ coordinate wise, getting a partially ordered ring; but almost all of the nice properties of $\bbr$ are lost.
     But, if $\SP(\bbr)=2^\bbn$, the power set of $\bbn$,  it turns out that there are objects $\SU\subset\SP(\bbr)$, the ultrafilters, such that defining our equivalence relation in terms of elements of $\SU$ preserves all ``well stated'' properties of $\bbr$. More specifically, given $(r_i),(s_i)\in\bbr^\bbn$, define $(r_i)\sim (s_i)$ if  $\{i:r_i=s_i\}\in\SU$ and  $(r_i)<(s_i)$ if, again $\{i:r_i<s_i\}\in\SU$, we find that the extended ring operations and partial order are well defined on the quotient $\bbr^\bbn/\SU$ and, in fact, it's not hard to prove that we get a totally ordered field containing $\bbr$  (the set of equivalence classes of constant sequences) as a subfield.
     For $r\in\bbr$, let $\rz r=(r)/\sim$, the equivalence class containing the corresponding constant sequence and $\bsm{{}^\s A}=\{\rz r:r\in A\}\subset\rz\bbr$ denote the image of $A\subset\bbr$ in our nonstandard model of $\bbr$; eg., ${}^\s\bbr$ is the image of $\bbr$. In general, we will let $\bsm{\bk{r_i}}$ denote the equivalence class of a sequence $(r_i)\in\bbr^\bbn$.  
     Given this, existence of infinite elements is clear:  if $(r_i)\in\bbr^\bbn$ with $r_i\ra\infty$ as $i\ra\infty$, then for each $s\in\bbr$, the set $\{i:r_i>s\}$ is in $\SF(\subset\SU)$ and so, by our definition, $\bk{r_i}>\rz s$. Note that to verify the field properties and total ordering of $\rz\bbr$, we need the full strength of ultrafilters, eg., the maximality property: if $A\subset\bbn$, then precisely one of $A$ or $\bbn\ssm A$ is in $\SU$.
     In particular, if $\om=\bk{m_i}\in\rz\bbn$ where $m_i\uparrow\infty$ as $i\ra\infty$, eg., $\om$ is infinite

     Since we can form the $\SU$ equivalence class of arbitrary sequences of real numbers and get a much larger field with all of the `same' well formed properties as $\bbr$, why can't we do this for $\bbn$, $\bbq$, $\bbq(\sqrt{5})$, etc. and get `enlarged' versions of these? We can, but try to do this with the algebra $F(\bbr)$ of real valued smooth maps on $\bbr$; ie., consider $F(\bbr)^\bbn/\sim$ as before. Clearly, this is a ring, but do these `functions' (on $\rz\bbr$) have, in some good sense, all of the properties of functions in $F(\bbr)$? Ignoring subtleties, the simple answer is yes, simply because these elements are internal and therefore fall under the aegis of the all encompassing {\it principle of transfer}; but let's see, to some extent, how this works in this case.
     Let's consider, for example, the nonstandard support (*support) of an equivalence class $\bk{f_i}\in\rz F(\bbr)$. (Recall that if $f\in F(\bbr)$, then the support of $f$, $supp(f)$, is the closure of the set of $t\in\bbr$ where $f(t)\not=0$.) But, then as we seem to be following a recipe of extending everything component wise and then taking the quotient, if $A_i=supp(f_i)$, then $\rz supp(\bk{f_i})$ must be the equivalence class $\bk{A_i}$.

     Yet, how is this a subset of $\rz\bbr$?
     This is a special case of the next problem of nonstandard analysis: extending `is an element of' to our ultrapower constructions. Miraculously, the properties of ultrafilters (eg., our $\SU$) allow one to (simplemindedly!) define $\bk{r_i}\in\bk{A_i}$ if $\{i:r_i\in A_i\}\in\SU$. (This really should be written $\bk{r_i}\;\rz\!\!\in\bk{A_i}$, but starring all extended operations, relations, etc. can rapidly get confusing.) Note that these subsets of $\SP(\rz\bbr)$ of the form $\bk{A_i}$ are called {\it internal sets} and are {\it precisely those subsets that extend the properties of $\SP(\bbr)$ (and therefore shall be denoted $\rz\SP(\bbr)$) via the principle of transfer}.
     For example, the typical bounded subset $\SC$ of $\rz\bbr$ does not have a nonstandard supremum, ie., $\rz\sup \SC$ does not exist; in particular, the transfer principle applied to the theorem that bounded subsets of $\bbr$ have suprema does not transfer to all *bounded elements of $\SP(\rz\bbr)$. (For example, the set of {\it infinitesimals}, denoted $\mu(0)$ here, certainly does not have a supremum.) Nonetheless, the transfer principle certainly does apply to the internal $\bk{A_i}$, and the proper definition is (surprise!)  $\rz\sup\bk{A_i}=\bk{\sup A_i}$. Note here that other notable examples of external subsets of $\rz\bbr$ are ${}^\s\bbr$ (and in fact ${}^\s A$ for any infinite subset $A\subset\bbr$), 
      \begin{align}     
         \rz\bbr_{nes}=\{\Ft\in\rz\bbr:|\Ft-\rz s|\in\mu(0)\;\text{for some}\;s\in\bbr\}, 
      \end{align}
     the {\it nearstandard real numbers}, $\rz\bbr_\infty$, the infinite real numbers and the infinite natural numbers, $\rz\bbn_\infty$ which are said to be {\it *finite}. If $A\subset\bbr$ is finite, let $|A|\in\bbn$ denote its cardinality, let $\om=\bk{m_i}\in\rz\bbn$ be an infinite *finite integer and $A_i\subset\bbr$ be such that $\{i:|A_i|=m_i\}\in\SU$. Then we say that $\bk{A_i}\subset\rz\bbr$ is a {\it *finite subset of *cardinality} $\om$. Although (for $\om$ infinte) these sets are infinite (in fact uncountable!), transfer implies that *finite subsets of $\bbr$ have the `same' properly stated properties that finite subsets have. Nonetheless, for a much stronger {\it sufficiently saturated} ultrapowers, there exists *finite subsets of $\rz\bbr$ containing ${}^\s\bbr$. These will play a role in this paper.

     We still haven't considered how the elements of $\rz F(\bbr)=F(\bbr)^\bbn/\sim$ can be considered as functions on $\rz\bbr$, but by now the reader can see that we must define $\bk{f_i}(\bk{x_i})=\bk{f_i(x_i)}$ and hope that the properties of $\SU$ ensure that this is well defined (ie., independent of choice of representatives) and is a function. This can indeed be verified and these functions are the {\it internal functions} in $F(\rz\bbr)$; the function $\Ff:\rz\bbr\ra\rz\bbr$ defined by $\Ff(x)=x$ if $x\sim 0$, ie., if $x$ is infinitesimal, and $\Ff(x)=0$ if $x\not\sim 0$ is an {\it external function}, eg., does not satisfy the internality criteria allowing the use of transfer. For example, it is *bounded (bounded in $\rz\bbr$), but $\rz\sup\Ff$ does not exist. Yet again, it's straightforward that for *bounded $\bk{f_i}$, $\rz\sup\bk{f_i}$ is well defined precisely by our recipe: $\bk{\sup f_i}\in\rz\bbr$ (this *supremum may be an infinite element of $\rz\bbr$). Internal subsets of $\rz\bbr$ of the form $\bk{A}$ (ie., the equivalence class containing the constant sequence $(A_i)$ for some $A\subset\bbr$) are called the {\it standard sets}. Following our recipe for denoting the equivalence class of a constant sequence by starring, $\bk{A}$ is usually denoted $\rz A$.
     For perspective, note that the copy of $[0,1]$ lying in $\rz[0,1]$, ie., ${}^\s[0,1]\subset\rz[0,1]$ is very sparse. For example, given an infinitesimal, $0<\Ft=\bk{t_i}\in\rz\bbr$ (eg., suppose  $t_i\downarrow 0$ as $i\ra\infty$) and $r\in(0,1)$, then $\rz r+[-\Ft,\Ft]\subset\rz[0,1]$, but intersects ${}^\s[0,1]$ only at $\rz r$. 
     Let's consider the {\it standard function} $\rz \sin(x)$. First of all, $\rz\sin$ is defined essentially as we defined standard sets, $\rz A$, ie., the $f_i$ above are all the function $\sin$. So if we define the {\it *domain} of $\bk{f_i}$ as we have all else: $\rz dom(\bk{f_i})\dot=\bk{dom(f_i)}$, we see that $\rz dom(\rz\sin)$ is all of $\rz\bbr$. (Or, as the domain of $\sin$ is $\bbr$, transfer says that the *domain of $\rz\sin$ is $\rz\bbr$.) A consequence of our constructive approach is the fact that $\rz dom(\rz\sin)=\bk{dom(f_i)}$ is internal. 
     It's not hard to check that $\rz\sin$ is really an extension of $\sin:\bbr\ra[-1,1]$: first, restricting the graph of $\rz\sin$ to ${}^\s\bbr$  just the image of the graph of $\sin$ in $\rz\bbr^2$; second, all of the symmetry and character properties hold and third, it has all of the (transferred) analytic properties that $\sin$ has.
     
     Before we conclude this tour, let's look at the standard part map. We defined the external (subring) $\rz\bbr_{nes}\subset\rz\bbr$ above. It should not be surprising that this is precisely those $\bk{r_i}\in\rz\bbr$ satisfying $|\bk{r_i}|<\rz t$ for some $t\in\bbr$ (here $|\;|:\rz\bbr\ra\rz[0,\infty)$ is defined as all else). But by it's definition, any $\bk{r_i}\in\rz\bbr_{nes}$ satisfies $\bk{r_i}\sim \rz r$ for some (clearly unique) $r\in\bbr$, eg., there is a well defined map (homomorphism onto!) $\fst:\rz\bbr_{nes}\ra\bbr$, the {\it standard part map}. Sometimes we will write ${}^o\bk{r_i}$ for $\fst\bk{r_i}$. 
     Note then that if $\bk{f_i}:\rz\bbr\ra\rz\bbr$ has image in $\rz\bbr_{nes}$, then we can define $\fst\bk{f_i}:\bbr\ra\bbr$ to be the map $r\in\bbr\mapsto\fst(\bk{f_i}(\rz r))$  
     Given this, if $\om=\bk{m_i}\in\rz 2\bbn$ with $m_i\uparrow\infty$ as $i\ra\infty$ (eg., $\om$ is infinite), consider $f_i$ given by $x\mapsto\sin(m_ix)$, so that writing $\xi=\bk{x_i}\in\rz\bbr$, we have $\rz\sin(\om \xi)=\bk{sin(m_ix_i)}$. By transfer, $\xi\mapsto\rz\sin(\om\xi)$ has all of the symmetry and analytic properties of $x\mapsto\sin(2mx)$ for some $m\in\bbn$, eg., solves the nonstandard *differential equation $\Ff''-\om^2\Ff=0$; yet it's standard part is not even Lebesgue measurable! 

\subsubsection{Formal tools}
    The four pillars of nonstandard analysis are the internal definition principle, transfer, saturation
     and (several versions of) ``overflow''. In order to discern the internal sets among all external sets, one can use the internal definition principle. It is basically an algorithmic way of determining if some object is of the form $\bk{S_i}$ and depends on the fact that all internal sets ar elements of some standard set $\rz T$. It asserts that if $\SB=\bk{B_i}$ is internal and $P(H)$ is a statement about an variable quantity $H$ in an internal set $\SX$ (of *functions, *measures, etc.,), then $\{H\in\SX:P(H)\;\text{is true}\}$ is internal.  
    As described above transfer allows us to, eg., translate to the
    nonstandard world careful statements about regular mathematics. Here we will need it to, eg., transfer to the nonstandard world the existence of maps of a certain type
    that have specified values on finite sets.
    Next, saturation has a variety of guises, one of which will be important here. Besides the need for the monads associated with neighborhood filters for a given topology, the specific form of saturation (see Stroyan and Luxemburg, \cite{StrLux76}, chapter 8) that will
    be needed here, in section 4, ensures that  *finite set are
    sufficiently large. Specifically, let  $X\in\SU$ be an infinite
    set of cardinality not bigger than $\SP(\dcm)$. Then, there
    exists a *finite $\SA\in\SU$ with ${}^\s X\subset\SA\subset\rz
    X$. This can be situated so that $\SA$ carries the same, well formed finitely stated, characteristics as $X$ (transfer). We will use this in the situation where $X$ is a particular
    collection of smooth maps or smooth section of a bundle. We will
    also use an overflow type  result that depends on our nonstandard model being
    polysaturated. See \cite{StrLux76} chapter 7; below we
    paraphrase their theorem 7.6.2 for our use.

\begin{theorem}\label{thm:extend external map}
    Suppose that $A\subset\rz\bbr^m$, not necessarily internal with cardinality less than that of $\rz\bbr^n$.
    Suppose that $F:A\ra\dstrcmn$ is any map. Then there is an
    internal subset $\bar{A}\subset\rz\bbr^m$ and an internal map
    $\bar{F}:\bar{A}\ra\dstrcmn$ such that $A\subset \bar{A}$ and
    $\bar{F}|_{A}=F$.
\end{theorem}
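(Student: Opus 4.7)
The plan is to deduce this theorem directly from polysaturation applied to an internal family of pairs. First I would introduce, for each $a\in A$, the set
\[
X_a \;=\; \{(B,G)\;:\; B\sbq \rz\bbr^m\text{ internal},\; G:B\ra \dstrcmn\text{ internal},\; a\in B,\; G(a)=F(a)\}.
\]
Each $X_a$ is internal by the internal definition principle: $a$ and $F(a)$ are concrete internal parameters (an element of $\rz\bbr^m$ and an element of $\dstrcmn$, respectively), and the range of the quantifier over pairs $(B,G)$ is an internal universe obtained by transfer of ``subset of $\bbr^m$ together with a map from that subset to $\dcmn$.''

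Second, I would verify the finite intersection property of the family $\{X_a\}_{a\in A}$. Given any standard-finitely many $a_1,\dots,a_k\in A$, the set $B_0=\{a_1,\dots,a_k\}$ is a standard-finite, hence internal, subset of $\rz\bbr^m$, and the assignment $G_0:B_0\ra \dstrcmn$ given by $G_0(a_j)=F(a_j)$ is a finite list of ordered pairs of internal objects and so is itself internal. By construction $(B_0,G_0)\in X_{a_1}\cap\cdots\cap X_{a_k}$, so the family has the finite intersection property.

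Third, I would invoke polysaturation. By hypothesis $|A|<|\rz\bbr^n|$, which is exactly the saturation level guaranteed under the polysaturation assumption that the paper has made explicit (and under which Stroyan--Luxemburg Theorem 7.6.2 is proved). Hence there exists $(\bar A,\bar F)\in\bigcap_{a\in A}X_a$, and this pair provides precisely the internal $\bar A\spq A$ and the internal $\bar F:\bar A\ra \dstrcmn$ with $\bar F|_A=F$ that the theorem demands.

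The only real subtlety — more a bookkeeping matter than a genuine obstacle — is making sure the saturation cardinal of the ambient model is at least $|\rz\bbr^n|$, so that a family of internal sets indexed by any $A$ of the prescribed cardinality is covered by the saturation principle. This is a standing assumption of the paper's setup; the remainder of the argument is purely the finite-intersection verification above, which is what makes the conclusion a direct corollary rather than a new construction.
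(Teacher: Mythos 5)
Your proof is correct and takes exactly the approach one would expect: you package the problem as a family $\{X_a\}_{a\in A}$ of internal sets, verify that each $X_a$ is internal (internal definition principle with internal parameters $a$ and $F(a)$, quantifying over the $\rz$-transfer of a standard set of pairs), establish the finite intersection property by taking the finite graph $(B_0,G_0)$ as a concrete witness, and then conclude by saturation using the cardinality bound $|A|<|\rz\bbr^n|$. Since the paper itself supplies no proof --- it simply paraphrases Stroyan--Luxemburg Theorem 7.6.2 and cites it --- you are not diverging from the paper's argument but reconstructing the one the paper leaves to the reference, and your reconstruction is the standard saturation/comprehension argument that underlies that cited theorem. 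The only point worth flagging is the one you already flag yourself: the precise relationship between the saturation cardinal of the polysaturated model and $|\rz\bbr^n|$ is left implicit (as it is in the paper), but nothing in the finite-intersection argument depends on more than the paper's standing assumption.
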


\subsection{Jet bundle constructions}
   In this section we cover enough of the basics of jets and the jet
   bundle formulation of (linear) differential operators, sufficient to
   formulate and prove our results.

\subsubsection{Jet bundle setup}
  We will briefly summarize that part of jet theory that we need.
  Although the following formulation is straightforwardly generalized
  to smooth manifolds, for brevity's sake we will restrict to the
  Euclidean case.
   Let $\bsm{P_k(m,n)}$ denote the polynomial maps of order $k$ from $\bbr^m$ to $\bbr^n$.
  If $f\in\dcm$, $x\in\bbr^m$ and $k\in\bbn$, let $\bsm{T^k_xf}\in P_k(m,n)$
  denote the $k$th order Taylor polynomial of $f$ at $x$. By *
  transfer, if $\rz P_k(m,n)$ denotes the $\rz\bbr$ vector space of internal polynomials
   from $\rz\bbr^m$ to $\rz\bbr^n$, $f\in\dstrcmn$ and
   $x\in\rz\bbr^m$, we have $\rz T^k_xf$, the internal $k^{th}$
   order Taylor polynomial of $f$ at $x$. Note that transfer implies
   that this has all of the properties of the Taylor polynomial,
   suitably interpreted. Note that although $f=\rz g$ so that $\xi\mapsto\rz T^k_\xi f$ is simply the transfer of the standard map $x\mapsto T^k_xf$, for
   $\xi\in\rz\bbr^m_{\infty}$ or $k\in\rz\bbn_{\infty}$, $\rz T^k_\xi f$ can be
   very pathological.

     We define an
  equivalence relation on \cmn\; as follows. We say that $f\in\dcmn$
  \textbf{vanishes to $k$th order at $x$} if $T^k_xf=0$ and for $f,g\in\dcmn$,
  we say that f equals g to $k$th order, written
   $\bsm{f\overset{x_k}\sim g}$ if $T^k_x(f-g)=0$. This defines an equivalence relation on \cmn.
  Let $\bsm{j^k_xf}$ denote the equivalence class containing $f$. We denote
  the set of equivalence classes by $\bsm{\SJ^k_{m,n,x}}$. There are a
  variety of definitions of $\SJ^k_{m,n,x}$ and its not hard to
  show that one can identify $\SJ^k_{m,n,x}$ with the set of Taylor polynomials of order $k$ at $x$ of
  smooth maps $(\bbr^m,x)\ra\bbr^n$ and we can identify $j^k_xf$ with
   $T^k_xf$. (An equivalence class consists of all maps with a given $k^{th}$ order Taylor polynomial at $x$.) Let $\bsm{\SJ^k_{m,n}}=\cup_{x\in\bbr^m}\SJ^k_{m,n,x}$.
  $\SJ^k_{m,n}$ is a smooth fiber bundle, in fact, as our maps have range $\bbr^m$, a vector bundle, over
  $\bbr^m$ with fiber over $x\in\bbr^n$ given by $\SJ^k_{m,n,x}$. Let
  $\bsm{\pi^k_0}:\SJ^k_{m,n}\ra\bbr^m$ denote the bundle projection. Note
  also that if $l,k\in\bbn$ with $l>k$, then $\SJ^l_{m,n}$ is a bundle
  over $\SJ^k_{m,n}$; let $\bsm{\pi^l_k}$ denote the bundle projection. We
  let $\bsm{C^{\infty}(\SJ^k_{m,n})}$ denote the $\bbr$ vector space of $C^{\infty}$
  sections of $\SJ^k_{m,n}$. If $f\in\dcmn$, there is a canonical
  section of $\pi^k_0$ given by $\bsm{j^kf}:x\mapsto j^k_xf$. There is a
  canonical map, the operation of taking the $k$ jet:
\begin{align}
   \bsm{j^k}:\dcmn \ra C^{\infty}(\SJ^k_{m,n})\quad f\mapsto j^kf
\end{align}

  For later purposes we also need to define the infinite jet,
  $\bsm{j^{\infty}_xf}$, for $f\in\dcmn$. Doing a simplified rendering of  projective
  limits, we will define the vector space of infinite jets at $x\in\bbr^m$, $\bsm{\SJ^{\infty}_{m,n,x}}$, to be the set of sequences
  $(f^0,f^1,f^2,\ldots)$  such that $f^k\in\SJ^k_{m,n,x}$ for all $k$ and for all nonnegative integers $j<k$, $\pi^k_j(f^k)=f^j$. Then for a given $f\in\dcmn$, the infinite jet of $f$ at $x$, $j^\infty_xf$, is clearly the well defined element of $\SJ^\infty_{m,n,x}$ given by $(j^0_xf,j^1_xf,j^2_xf,\ldots)$. It is easy to see that  $\SJ^{\infty}_{m,n,x}$, is an infinite dimensional
  vector space over $\bbr^m$, operations given componentwise, and
  that, for each $x\in\bbr^m$, the map $\bsm{j^{\infty}_x}:f\mapsto j^{\infty}_xf:\dcmn\ra\SJ^{\infty}_{m,n,x}$ is a
  vector space surjection with kernel the subspace of $g\in \dcmn$
  such that $j^k_xg=0$ for all integers $k$, ie., $g$ vanishes to
  infinite order at $x$. We will also need the forgetful fiber
  projection $\bsm{\pi_{k,x}}:\SJ^{\infty}_{m,n,x}\ra\SJ^k_{m,n,x}$. As the base  range space is
  linear, $\pi_{k,x}$ is a surjective linear morphism and clearly has kernel
  the (ideal) of formal power series that vanish to $k^{th}$ at $x$, see
  above .


   From the canonical (global) coordinate framing $\bsm{x_i}$,
  $1\leq i\leq m$ on $\bbr^m$, and $\bsm{y^j}$, $1\leq j\leq n$ on $\bbr^n$,
  we get induced coordinates, $\bsm{x_i, y^j_{\a}}$ for $|\a|\leq k$ and
  $1\leq j\leq n$, on $\SJ^k_{m,n}$ defined as follows. The $x_i$ are
  just the pullback of the coordinates on the base $\bbr^m$.  If
  $\phi\in\SJ^k_{m,n}$, then $\phi\in\SJ^k_{m,n,x_0}$ for some
  $x_0\in\bbr^m$ and so $\phi$ can be written as $j^k_{x_0}f$ for some
  $f\in\dcmn$. Then

\begin{align}
  x_i(\phi)=x_{0,i},\quad y^j_{\a}(\phi)\doteq\p^{\a}(f^j)(x_0)=\phi^j_{\a}.
\end{align}
   where $\bsm{\p^{\a}}$ denotes $\f{\p^{|\a|}}{{({\p}x_1)}^{\a_1}\cdots{({\p}x_m)}^{\a_m}}$
  where $\a=(\a_1,\ldots,\a_m)$.  If $\la\in
   C^{\infty}(\SJ^k_{m,n},\bbr^n)$, then with respect to the induced
  coordinates, we write this as $\bsm{\la(x_i,y^j_{\a})}$. therefore, for
  later use, we can Taylor expansion $\la$ \textbf{with respect to the $x_i$ coordinates}, around a given $p_0$, as follows. Let
  $p_0=(x_{0,i},y^k_{0,\a})$ be a coordinate representation as above.
  Let $\bsm{\la^l}$ denote the $l^{th}$ coordinate of $\la$ with respect to
  the canonical coordinates on $\bbr^n$. Then the Taylor expansion to
  order s with in the base coordinates is
\begin{align}\label{taylor exp 1}
   \la^l(x_i,y^k_{\a})=\sum_{|\a|\leq
   s}K_{\a}(x-x_0)^{\a}\p^{\a}(\la^l)(p_0)+ \tl{\la}^l(x_i,y^k_{\a})
\end{align}
   where for $|\a|=s+1$, $\tl{\la}^l\in
   C^{\infty}(\SJ^k_{m,n},\bbr^n)$ vanishes to order $s+1$ in the base coordinates at $p_0$ , $K_{\a}$ are the usual factorial
   constants, $(x-x_0)^{\a}=(x_1-x_{0,1})^{\a_1}\cdots (x_m-x_{0,m})^{\a_m}$, and
   $\bsm{\p^{\a}}$ is the ${\a}^{th}$ partial derivative with respect to the
   base coordinates. Therefore, for our purposes the vanishing order, normally defined
   in terms of the power of the maximal ideal at the given point in terms of the $x$
   coordinates, will be defined in terms of the degree of vanishing
   derivatives (in $x$ coordinates) at the given point. We need to emphasize that we are considering vanishing order of the smooth maps on the jet bundle only with respect to dependence on the base coordinates.

   Given the usual framing $\p_i=\f{\p}{\p x_i}$ , $1\leq i\leq m$ for
   $T\bbr^m$, $\p_{i,x}$ being the frame for $T_x\bbr^m$; we have an
   induced framing of $T\SJ^k_{m,n}$ given by adjoining to these tangent horizontal vectors the vectors  $\p_{y^j_{\a}}= \f{\p}{\p  y^j_\a}$ that are tangent to the fibers of the bundle projection $\pi^k_0$ at $x$, for $j=1,\ldots,n$ and $|\a|\leq k$.

   The notion of contact is useful in understanding the sharpening of the results here vis a vie the results of Todorov. Given $x\in\bbr^n$ and a nonnegative integer $s$, we say that $f\in\dcmn$ has \textbf{contact }$\bsm{s}$ with $\bbr^m$ at $x$ if $f(x)=0$ and the graph of $f$, $\G_f\subset\bbr^m\x\bbr^n$ is flat to $s^{th}$ order at $(x,0)$, that is, if $T^k_xf=0$, ie., if $j^s_xf$ is the equivalence containing the $0$ $s$ jet at $x$.
   We say that \textbf{$\bsm{f,g\in\dcmn}$ have $\bsm{s^{th}}$ order contact at $x$} if $f-g$ has $s^{th}$ order contact with $\bbr^m$ at $x$, the graph of the $0$ function, at $x$. It should be obvious that this is an equivalence relation and that the set of all $g\in\dcmn$ that belong to the $s^{th}$ order contact class of $f$ is precisely the affine subset with the same $s^{th}$ order jet as $f$.

\subsubsection{Prolonging jet maps and total derivatives}
   Let $\bsm{\bbr^p_m}$ denote the product bundle with fiber $\bbr^p$ and
   base $\bbr^m$; if $p=1$, we will denote this bundle by $\bsm{\bbr_m}$. If $x\in\bbr^m$, let
   $\bsm{\bbr^p_{m,x}}$ denote the vector space fiber of $\bbr^p_m$ over $x$.  In the following we will be using \textbf{vector bundle}
   \textbf{maps}, ie., smooth maps of bundles over the same base that preserve fibers and cover the identity map on the base. The symbols of linear differential operators,
   $\la:\SJ^r_{m,1}\ra\bbr^p_m$ are maps of this type. The set of such maps is a $C^{\infty}(\bbr^m,\bbr)$
    module and will be denoted by
    $\bsm{C^{\infty}(\SJ^r_{m,1},\bbr^p_m)}$.
   If  $\la: \SJ^k_{m,n}\ra \bbr^p_m$ is such a smooth bundle map, and $l\in\bbn$,
   then there exists a smooth bundle map $\bsm{\la^{(l)}}:\SJ^{k+l}_{m,n}\ra
   \SJ^l_{m,p}$\;, called the $\bsm{l^{th}}$\textbf{-prolongation of} $\bsm{\la}$ such that
   the following diagram is commutative
\begin{align}\label{diag1}
    \begin{CD}
                \SJ^{k+l}_{m,n}      @>\la^{(l)} >>     \SJ^l_{m,p}  \\
                @V\pi^{k+l}_k VV                         @V\pi^l_0VV\\
                \SJ^k_{m,n}          @>\la      >>
                \bbr^p_m
    \end{CD}
\end{align}
    That is, as $j^s_x(\la\circ j^rf)$ depends only on derivatives up to order $s$ of $y\mapsto j_y^rf$ at $x$,
    and so only on the $r+s$ jet of $f$ at $x$, then the following
    definition is well defined. If $f\in\dcmn$, then
\begin{align}\label{eqn:coord free prolng formula}
    \bsm{\la^{(s)}}(j^{r+s}_x(f))=j^s_x(\la\circ j^rf).
\end{align}

    The prolongation of vector fields on $\bbr^m$ to vector fields on
    $\SJ^k_{m,n}$ are given by fairly complicated recursion formulas.
   For treatments of prolongations of vector fields in somewhat
    different contexts, see Olver \cite{Olver1993}, p110 or Pommaret, \cite{Pommaret1978}, p253. Pommaret
    gives a remarkably easy derivation of these expressions. We only
    need the prolongation of coordinate vector fields, ie., total
   derivatives, and these have far simpler expressions. These give
   explicit local expressions of prolongations and so
    allow us to computationally investigate the
    effect of successive prolongations on maps of jets. For  each
    coordinate tangent field $\p_i$ on $T\bbr^m$, for $1\leq i\leq m$,
    we have an explicit expression for the corresponding lifted local
    section of $T\SJ^k_{m,n}$, the \textbf{total derivative} $\bsm{\p^\#_i}$ defined as
    follows.
\begin{align}\label{total der sum}
    \p_i^\#=
    \p_i+ \sum_{\substack{|\a|\leq k\\1\leq j\leq n}}
    y^j_{\a^i}\p_{y^j_{\a}}
\end{align}
   where $\a^i=(\a_1,\ldots,\a_i+1,\ldots,\a_n)$. Note that $\p_i^\#$
   depends on coordinates of order $k+1$, ie., for $\la\in
   C^{\infty}(\SJ^k_{m,n},\bbr^n_m)$, we have $\p_i^\#(\la)$ are the
   coordinates for a map $\la^{(1)}:\SJ^{k+1}_{m,n}\ra\SJ^1_{m,n}$ with respect to the induced jet coordinates. In fact, we
    have the following.

\begin{lemma}\label{lem:loc coord of 1-prolong}
   Suppose that $\la:\SJ^k_{m,n}\ra\bbr^n$ is a smooth map. Let $\la^j$
   for $j=1,\ldots,n$ denote the coordinates of $\la$ with respect to
   the standard coordinate basis for $\bbr^n$. Then the components of
   $\la^{(1)}$ with respect to the given coordinates are
   $\p_i^\#(\la^j_{\a}$)
\end{lemma}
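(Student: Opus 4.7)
The plan is to reduce the claim to an unwinding of the coordinate-free prolongation formula \eqref{eqn:coord free prolng formula} using the chain rule, then recognize the total derivative expression \eqref{total der sum} in what results. More precisely, I would unpack what the $1$-jet $j^1_x(\la\circ j^kf)$ says in the induced coordinates on $\SJ^1_{m,n}$ and match this, coordinate by coordinate, against the evaluation of $\p^\#_i(\la^j)$ at the $(k+1)$-jet of $f$.

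First, I would fix $f\in\dcmn$ and $x\in\bbr^m$ and spell out the right hand side of \eqref{eqn:coord free prolng formula}. A $1$-jet at $x$ of a map $\bbr^m\to\bbr^n$ is determined by its value and its first partial derivatives at $x$, so the $y^j$-coordinate of $\la^{(1)}(j^{k+1}_xf)$ is $\la^j(j^k_xf)$ while the $y^j_i$-coordinate is $\p_i\bigl(\la^j\circ j^kf\bigr)(x)$. Now the map $y\mapsto j^k_yf$, viewed in induced coordinates on $\SJ^k_{m,n}$, is simply $y\mapsto (y_i,\p^\a f^\ell(y))$. So its differential at $x$ sends the basis vector $\p_i$ to the tangent vector
\begin{align}
   \p_i + \sum_{\substack{|\a|\leq k\\1\leq \ell\leq n}} \p_i\p^\a f^\ell(x)\,\p_{y^\ell_{\a}}
\end{align}
at $j^k_xf$. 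Crucially, $\p_i\p^\a f^\ell(x)=\p^{\a^i}f^\ell(x)$, which is exactly the value of the induced coordinate $y^\ell_{\a^i}$ on the $(k+1)$-jet $j^{k+1}_xf$.

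Second, I would apply the ordinary chain rule to $\p_i(\la^j\circ j^kf)(x)$, yielding the pairing of $d\la^j$ at $j^k_xf$ with the tangent vector just computed. Comparing term by term with the definition \eqref{total der sum} of the total derivative $\p_i^\#$, each summand matches: the $\p_i\la^j$ term comes from differentiating in the base, and the $y^\ell_{\a^i}\p_{y^\ell_\a}\la^j$ terms come from the derivatives $\p_i\p^\a f^\ell(x)$ above, evaluated through the coordinates of $j^{k+1}_xf$. Thus $\p_i(\la^j\circ j^kf)(x) = \p_i^\#(\la^j)(j^{k+1}_xf)$.

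Since $f$ and $x$ are arbitrary and the equivalence class $j^{k+1}_xf$ ranges over all of $\SJ^{k+1}_{m,n}$ as we vary $f$ and $x$, the identity of functions on $\SJ^{k+1}_{m,n}$ follows, which is the lemma (the zeroth-order components $\la^j$ being tautological). The only real obstacle is bookkeeping: checking that the chain-rule expansion of $\p_i(\la^j\circ j^kf)(x)$ produces precisely the sum in \eqref{total der sum} with the right multi-index shift $\a\mapsto\a^i$, and that this matches the placement of jet coordinates on $\SJ^{k+1}_{m,n}$ rather than $\SJ^k_{m,n}$. Well-definedness of the construction has already been subsumed in the coordinate-free identity \eqref{eqn:coord free prolng formula}, so no independent verification is needed there.
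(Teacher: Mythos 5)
Your argument is precisely the one the paper gives, only fleshed out: you take the local form of the prolongation identity $\p_i^\#(\la)(j_x^{k+1}f)=\p_i(\la\circ j^k f)(x)$, expand the right side by the chain rule, and match terms against the definition of $\p_i^\#$ via the identification $\p_i\p^\a f^\ell(x)=y^\ell_{\a^i}(j^{k+1}_xf)$. This is the same approach, carried out in more detail than the paper's terse sketch.
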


\begin{proof}
   One can verify this  lemma and the expression (\ref{total der sum})
   using the local version of the definition for prolonging jet maps
  (\ref{eqn:coord free prolng formula}) when $s=1$, eg., see \cite{Olver1993}, p109; ie.,
\begin{align}\label{1 prolong coord eqn}
    \p_i^\#(\la)(j_x^{r+1}(f)=\p_i(\la\circ j^r f)(x)
\end{align}
    and then applying the chain rule to the right side of (\ref{1
    prolong coord eqn}).
\end{proof}

\subsection{Differential operators and their prolongations}

    In order to align with Todorov's setup we will now restrict the
    dimension of the range space to be $1$. The superscript $j$ enumerating range space
    components  will no longer appear.

    Let $\bsm{LPDO_r}$ denote the vector space of linear partial differential operators of degree less than or equal to $r$
   with  coefficients in \cm.  Suppose that $P\in LPDO_r$. Then there exists
   a smooth bundle map $\bsm{\la_P}: \SJ^r_{m,1}\ra\bbr_m$ called the \textbf{total symbol of $\bsm{P}$}
   such that if $f\in\dcm$, then  $P(f)=\la_P\circ j^rf$ as elements
   of \cm. If $\la:\SJ^r_{m,1}\ra\bbr_m$ is the symbol of an $r^{th}$
   order differential operator, $P$,  then the \textbf{$\bsm{s^{th}}$ prolongation}
   $\bsm{\la^{(s)}}:\SJ^{r+s}_{m,1}\ra\SJ^s_{m,1}$ is defined
   on $r+s$ jets above.
   As prolongations of differential operators mapping $\dcm$ to $\dcm$ are systems, we will use the notation $\bsm{LPDO_{r+s}^s}$ for $r+s$ order linear differential operators on $\dcm$ to smooth sections of $\SJ^s_{m,1}$. In particular, if $P\in LPDO_r$, and $s\in\bbn$, then there is
   $\bsm{P^{(s)}}\in LPDO_{r+s}^s$\;, called the
   $\bsm{s^{th}}$ \textbf{prolongation of} $\bsm{P}$, defined as the differential operator
   whose symbol is the $s^{th}$ prolongation of $\la_P$. We will have more to say about its nature later.

   On the nonstandard level, note that if $g\in\dcm$, then, at every standard $x$, ie.,
   $\rz x\in\;^\s\bbr^m$, and for each $k\in\;^\s\bbn_0=\bbn\cup\{0\}$,
   $\rz\!j_{*x}^k(\rz g)=\rz\!(j^k_xg)$. That is, the internal operator
   $\bsm{\rz\!j^k_{*x}}|_{\dcgcm}$ operating on standard functions is just the transfer of $j^k_x$. (At nonstandard points this is not true.) It
   therefore follows that $\rz\la\circ\rz j^k$, restricted to a standard
   map, $\rz g$ at a standard point $\rz x$, is just the *transfer of $\la\circ
   j^k_xg\in\bbr_{m,x}$. We shall give a sufficient account of the remainder of the nonstandard material needed after we recall a bit more standard geometry.

\section{Standard geometry: Prolongation and Vanishing}\label{section:prolong and vanishing}
   This section proves the standard results that allow the proof of
   Corollary \ref{cor: infinite order Todorov}.
   The idea here is desingularize our total symbol by ``lifting'' it to a sufficiently high jet level where we can then invoke
   a version of Todorov's result. In order to do this, we need some sort
   of correspondence between solutions of $P$ and those of $P^{(s)}$.
   We also need this procedure to decrease the vanishing order the coefficients
   of the $P^{(k)}$  as $k$ increases. Here is the idea. On the one hand, we can think of a symbol, $\la=\la_P$ of an LPDE as a smooth family of linear maps $x\mapsto\la_x$, and therefore one can think of vanishing order of $\la$ at $x_0$ as the ``flatness'' of the graph of this map at the point $x_0\in\bbr^m$. This relates directly to the Taylor polynomials of the smooth coefficients of $\la$ at $x_0$. On the other hand and more abstractly, there is a classic ``desingularization'' machinery for jet bundle map, eg., symbols of differential operators, that carries solutions to solutions, ie., prolongation. In this section we will relate the intuitive vanishing order to this prolongation method in order to get crude controls between jets in the domain and range of the prolongation of $\la$, in terms of these singularities.   This will be done in this section.

    We first need the proper notion of vanishing order of a linear bundle map
    $\la:\SJ^k_{m,1}\ra\bbr$.
\begin{definition}
   Let  $x_0\in\bbr^m$, and $c\in\bbn$. Then we say
   that $\bsm{\la}$\textbf{ vanishes to order (exactly)} $\bsm{c}$ \textbf{(in $\bsm{x}$) along $\bsm{(\pi^k_0)^{-1}(x_0)}$}, written $\bsm{x_0\in\SZ^c(\la)}$, if
   $\p^{\a}(\la)(p)=0$ for all $\a$ with $|\a|\leq c$ and for all $p\in (\pi^k_0)^{-1}(x_0)$.
    and, secondly, there exists some
   and $\beta$ with $|\beta|=c+1$ and $p_0\in (\pi^k_0)^{-1}(x_0)$ such that
   $\p^{\beta}(\la)(p_0)\not =0$. Note, as always, that $\p^{\a}$
   denotes the $\a^{th}$ derivative with respect to the $x_i$
   coordinates.
\end{definition}

   First note that this is more transparently stated as follows. Writing $\la=\sum_{\a}f_\a y_\a$ for some smooth $f_\a$'s, this condition is equivalent to stating that for each coefficient $f_\a$,  $\p^\beta f_\a (x_0)=0$ for all multiindices $\beta$ satisfying $|\beta|\leq k$;
   with the second condition being that there exists a coefficient $f_\a$ and a multiindex $\beta$ with $|\beta|=k+1$ such that $\p^\beta f_\a(x_0)\not=0$.
   Note also that although the notion of contact is closely related to vanishing order, we will not pursue this connection here.  In the next lemma, we don't need to restrict to jet mappings that
   are the symbols of elements of $LPDO_k$. When we consider how total
   derivatives change the vanishing order of jet maps, it will be
   essential to consider the particular form of jet maps that are
   symbols of elements of $LPDO_k$. Below we will be using the following notation. If $\beta=(\beta_1,\cdots,b_m)$ is a multiindex of order $k$; ie., $|\beta|=\beta_1+\cdots+\beta_m=k$ and $1\leq i_0\leq m$, then $\bsm{\beta_{i_0}}=(\beta_1,\ldots,\beta_{i_0}+1,\beta_{i_0+1},\ldots,\beta_m)$; so eg., $|\beta_{i_0}|=|\beta|+1$.

\begin{lemma}\label{lem1:decreasing order of 0}
    Let $\la\in C^{\infty}(\SJ^k_{m,1},\bbr_m)$  and suppose that
    $p_0\in\SJ^k_{m,1}$ is in $\SZ^c(\la)$. Then for some $i\in\{1,\ldots,m\}$, we have
    $p_0\in\SZ^{c-1}(\p_i(\la))$.
\end{lemma}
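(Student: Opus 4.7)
The plan is to unpack the definition of vanishing order and exploit the elementary fact that if a derivative of order $c+1$ is nonzero, then one of its ``one-step factorizations'' through a coordinate partial gives an order-$c$ derivative of $\p_i\la$ that is nonzero.

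First I would record what the hypothesis actually delivers. Writing $x_0=\pi^k_0(p_0)$ and $F=(\pi^k_0)^{-1}(x_0)$, the condition $p_0\in\SZ^c(\la)$ unpacks into two ingredients: (i) $\p^{\a}\la\equiv 0$ on all of $F$ for every multiindex $\a$ with $|\a|\leq c$; and (ii) there exist a multiindex $\beta$ with $|\beta|=c+1$ and a point $q\in F$ such that $\p^{\beta}\la(q)\neq 0$. Since $|\beta|=c+1\geq 1$, at least one coordinate $\beta_{i_0}$ is positive. Fix such an $i_0$ and set $\a^{\circ}=\beta-e_{i_0}$, so that $|\a^{\circ}|=c$ and $\beta=(\a^{\circ})_{i_0}$ in the paper's multiindex notation.

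Next I would verify the two clauses of the definition for $\p_{i_0}(\la)$ with vanishing order exactly $c-1$ along $F$. For the first clause, let $\a$ be any multiindex with $|\a|\leq c-1$. Then
\begin{align}
\p^{\a}\bigl(\p_{i_0}\la\bigr)=\p^{\a_{i_0}}\la,
\end{align}
and $|\a_{i_0}|=|\a|+1\leq c$, so by ingredient (i) this vanishes identically on $F$. For the second clause, observe that
\begin{align}
\p^{\a^{\circ}}\bigl(\p_{i_0}\la\bigr)(q)=\p^{(\a^{\circ})_{i_0}}\la(q)=\p^{\beta}\la(q)\neq 0,
\end{align}
and $|\a^{\circ}|=c=(c-1)+1$, which is exactly the witness required by the definition of $\SZ^{c-1}$. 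Together these show $p_0\in\SZ^{c-1}(\p_{i_0}\la)$, and we may take $i=i_0$.

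There is essentially no obstacle here beyond bookkeeping on multiindices; the lemma is a one-step differentiation statement, and the only care needed is to check both the ``vanishes through order $c-1$'' and the ``nonzero at order $c$'' halves of the strict-order definition, which is why the choice of $i_0$ must be tied to a coordinate where the witnessing $\beta$ of (ii) is positive (rather than an arbitrary $i$).
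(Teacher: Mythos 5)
Your proof is correct and follows essentially the same route as the paper's: factor the witnessing multiindex $\beta$ as $(\a^\circ)_{i_0}$, use commutation of partials to transfer the vanishing through order $c-1$ and the nonvanishing at order $c$ to $\p_{i_0}\la$. If anything you are slightly more careful than the paper in tracking the ``along the whole fiber $(\pi^k_0)^{-1}(x_0)$'' clause of the definition of $\SZ^c$, which the paper's proof compresses to evaluation at the single point $p_0$; the substance is the same.
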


\begin{proof}
   By hypothesis, $\p^{\a}(\la)(p_0)=0$ for $|\a|\leq c$  and there is a multiindex $\beta$ with $|\beta|=c+1$ and an
    such that $\p^{\beta}(\la)(p_0)\not =0$. Write $\beta=\a_i$
   for some multiindex $\a$ with $|\a|=c$ and $i\in\{1,\ldots,m\}$.
   That is, $\p^{\a}(\p_i\la)(p_0)\not =0$. But
   if $\a$ is a multiindex such that $|\a|\leq c-1$, then $|\a_i|\leq
   c$ and so
    $\p^{\a}(\p_i\la)(p_0)=\p^{\a_i}(\la)(p_0)=0$ by hypothesis. But
   then by definition $p_0\in\SZ^{c-1}(\p_i\la)$, as we wanted to
   show .
\end{proof}

\subsection{Lifting solutions }

   At this point, it is important to note the explicit form of the jet
   maps that are symbols of elements of $LPDO_k$.

\begin{lemma}\label{lem: LPDO symbol}
   Suppose that $P\in LPDO_k$. Then $\la=\la_P:\SJ^k_{m,1}\ra\bbr_m$ can
   be written in local coordinates $(x_i,y_{\a})$ in the following
   form $\la= \sum_{|\a|\leq k
   }f_{\a}y_{\a}$ where $f_{\a}\in\dcm$.
\end{lemma}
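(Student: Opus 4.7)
The plan is to unpack the two definitions in play—the classical coordinate description of an $LPDO_k$ and the relation defining the total symbol $\la_P$—and show they match term by term in the induced jet coordinates.

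First, I would invoke the standard characterization of linear partial differential operators of order $\leq k$ with $C^\infty$ coefficients: any $P \in LPDO_k$ admits a coordinate representation
\begin{equation*}
    P(f)(x) = \sum_{|\a|\leq k} f_\a(x)\, \p^\a f(x) \qquad (f\in\dcm),
\end{equation*}
for uniquely determined smooth coefficients $f_\a\in\dcm$. This is essentially the working definition of an $LPDO_k$ and needs no argument beyond recalling it.

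Next, I would use the defining property of the total symbol, namely $P(f) = \la_P\circ j^kf$, combined with the explicit description of the induced coordinates on $\SJ^k_{m,1}$: for the jet $j^k_xf \in \SJ^k_{m,1,x}$, one has $x_i(j^k_xf) = x_i$ and $y_\a(j^k_xf) = \p^\a f(x)$ for $|\a|\leq k$. Substituting these into the displayed formula gives
\begin{equation*}
   \la_P(j^k_xf) = P(f)(x) = \sum_{|\a|\leq k} f_\a\!\bigl(x_i(j^k_xf)\bigr)\, y_\a(j^k_xf).
\end{equation*}
Thus, on the image of the jet map $x\mapsto j^k_xf$ the bundle map $\la_P$ agrees with the smooth function $\sum_{|\a|\leq k} (f_\a\circ\pi^k_0)\,y_\a$ on $\SJ^k_{m,1}$.

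The one nontrivial point—though easy—is that this identity determines $\la_P$ on \emph{all} of $\SJ^k_{m,1}$, not merely on jets of a single $f$. For this I would observe that any point $p\in\SJ^k_{m,1,x_0}$ with coordinate data $(x_{0,i},\,y_{0,\a})$ arises as $p = j^k_{x_0}f$ for some $f\in\dcm$; an explicit choice is the Taylor polynomial $f(x) = \sum_{|\a|\leq k}\tfrac{y_{0,\a}}{\a!}(x-x_0)^\a$, whose derivatives at $x_0$ recover the prescribed $y_{0,\a}$. Since the fiber coordinates $y_\a$ can therefore be taken independently at each base point $x_0$, uniqueness of the representation of $\la_P$ as a polynomial in the $y_\a$ with smooth coefficients in $x$ is immediate, and we conclude $\la_P = \sum_{|\a|\leq k} f_\a y_\a$ in local coordinates. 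The only potential obstacle is bookkeeping between the intrinsic bundle map $\la_P$ and its pullback to smooth functions on the total space of $\SJ^k_{m,1}$, but the coordinate formulas for $j^kf$ make this transparent.
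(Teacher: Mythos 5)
Your argument is correct, and it simply fills in the details that the paper declines to spell out: the paper's entire proof of this lemma is the single sentence ``This is clear.'' Your unpacking---the classical coordinate form of an $LPDO_k$, the relation $P(f)=\la_P\circ j^kf$, the identification $y_\a(j^k_xf)=\p^\a f(x)$, and the surjectivity of $f\mapsto j^k_{x_0}f$ onto the fiber (via Taylor polynomials) to extend the identity from image points to all of $\SJ^k_{m,1}$---is exactly the elementary verification the author has in mind.
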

\begin{proof}
   This is clear.
\end{proof}

   A remark is in order. Since for each $x\in\bbr^m$, $\la_P=\la_{P,x}:\SJ^r_{m,1,x}\bbr_{m,x}$ is linear, then we will consider look of the vanishing order of $x\mapsto\la_x$

\begin{lemma}
   Suppose that $P\in LPDO_r$ , $s\in \bbn$ and $f\in\dcm$ solves
   $P^{(s)}(f)=0$. Then $f$ solves $P(f)=0$.
\end{lemma}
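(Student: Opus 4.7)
The plan is to unpack the definition of the prolongation $P^{(s)}$ and then use the commutative diagram (\ref{diag1}) to project the vanishing of $P^{(s)}(f)$ down to the vanishing of $P(f)$.

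First, I would recall that by construction, $P^{(s)} \in LPDO^s_{r+s}$ is the differential operator whose symbol is $\lambda_P^{(s)} : \SJ^{r+s}_{m,1} \to \SJ^s_{m,1}$, so that for any $f \in \dcm$ and $x \in \bbr^m$,
\begin{align}
P^{(s)}(f)(x) = \la_P^{(s)}(j^{r+s}_x f) = j^s_x(\la_P \circ j^r f) = j^s_x(P(f)),
\end{align}
where the middle equality is precisely the defining formula (\ref{eqn:coord free prolng formula}) of the $s$th prolongation applied to the symbol $\la_P$.

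Next, I would apply the canonical forgetful projection $\pi^s_0 : \SJ^s_{m,1,x} \to \bbr_{m,x}$ to both sides. The hypothesis $P^{(s)}(f) = 0$ means $j^s_x(P(f)) = 0 \in \SJ^s_{m,1,x}$ for every $x \in \bbr^m$. Projecting to the $0$-jet component gives $j^0_x(P(f)) = P(f)(x) = 0$ for every $x$, so $P(f) = 0$ identically. (Equivalently, the $0$th-order Taylor coefficient of any $s$-jet is its value, and the value of $j^s_x(P(f))$ at the base point is $P(f)(x)$.)

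There is no real obstacle here: the statement is essentially a tautological consequence of the definition of prolongation, which was designed precisely so that solutions of $P(f) = 0$ lift to solutions of $P^{(s)}(f) = 0$ (and, conversely, solutions of the prolonged system project back to solutions of the original). The only thing to be careful about is invoking the correct defining equation of the prolonged symbol rather than attempting a coordinate computation via total derivatives; the coordinate formula from Lemma \ref{lem:loc coord of 1-prolong} would work too, but the coordinate-free identity (\ref{eqn:coord free prolng formula}) makes the argument immediate.
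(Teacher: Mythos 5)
Your proof is correct and takes essentially the same approach as the paper: both reduce to the identity $P^{(s)}(f) = j^s(P(f))$ and then observe that a smooth function whose $s$-jet vanishes identically must itself vanish. Your version merely spells out the intermediate step through the prolonged symbol and uses the forgetful projection $\pi^s_0$ explicitly, whereas the paper states the final implication directly; the content is the same.
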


\begin{proof}
   This is an easy unfolding of the definitions. Operationally,
   $P^{(s)}$ is defined on $f\in\dcm$ as follows.
   $P^{(s)}(f)=j^s\circ P(f)$. But then $P^{(s)}(f)=0$ implies that
   $j^s\circ P(f)=0$. But for $g\in\dcm$, $j^s(g)=0$ as a section of $\SJ^s_{m,1}$ if
   and only if $g$ is identically $0$, eg., letting $g=P(f)$, we get
   $P(f)=0$.
\end{proof}

\subsection{Lifting zero sets}

\subsubsection{ Prolongation of linear symbols}

\begin{remark}
   If $\la:\SJ^r_{m,1}\ra\bbr_m$ is the symbol of $P\in LPDO_r$, then
   w\textbf{e will instead  write $\SZ^c(\la)$ for $\pi^r(\SZ^c(\la))$. In particular,
   $\SZ^c(\la)$ will now be considered as a subset of the base space,}
   $\bbr^m$. Note that since $\la$ is linear on the fibers, with particular form as noted in
   Lemma \ref{lem: LPDO symbol}, then
   to say that $\bsm{x\mapsto\la_x}$ \textbf{vanishes to order $c$ at $x_0$ is the same as our fiber condition as this says that the coefficient $f_\a$ of our generic jet derivative $y_\a$ vanishes to order $c$ at $x_0$.}
   So, in the linear case, we have the following definition.
\end{remark}
\begin{definition}
   Suppose that $\la$ is the symbol of $P\in LPDO_r$. Let  $\bsm{\SZ^c(\la)}$ denote the $x\in\bbr^m$ where all of the coefficients of $\la$ vanish to order $c$ and at least one does not vanish to order $c+1$. If $\la$ is such a linear jet bundle map, let $\bsm{\SZ_\la}$ denote those $x\in\bbr^m$ where $\la_x$ is the zero linear map. Given this, it should be obvious that the
   conclusion of Lemma \ref{lem1:decreasing order of 0} holds with
   $\pi^r(\SZ^c(\la))$ in place of $\SZ^c(\la)$.
\end{definition}

With this development, we have the following initiating lemma.

\begin{lemma}\label{lem2:decreasing order of 0}
   Suppose that $P\in LPDO_r$ with $\la\in
   C^{\infty}(\SJ^r_{m,1},\bbr_m)$
   the symbol of $P$.
   Let $c\in \bbn$
   and $x_0\in\SZ^c(\la)$. Then $x_0\in\SZ^{c-1}(\la^{(1)})$. In particular, if
   $x_0\in\SZ^1(\la)$, then $\la^{(1)}_{x_0}\not=0$.
\end{lemma}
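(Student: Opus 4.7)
The plan is to reduce everything to a coordinate calculation on the coefficient functions of $\la$. By Lemma \ref{lem: LPDO symbol}, write $\la=\sum_{|\a|\leq r}f_\a y_\a$ with $f_\a\in\dcm$. The hypothesis $x_0\in\SZ^c(\la)$ translates into: every coefficient $f_\a$ vanishes to order $c$ at $x_0$ (that is, $\p^\gamma f_\a(x_0)=0$ for $|\gamma|\leq c$), while there exist $\a_0$ and a multiindex $\beta$ with $|\beta|=c+1$ such that $\p^\beta f_{\a_0}(x_0)\neq 0$.

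Next I would use Lemma \ref{lem:loc coord of 1-prolong} together with (\ref{total der sum}) to write $\la^{(1)}$ in the induced coordinates on $\SJ^1_{m,1}$. Its components are $\la$ itself (the zero-jet component) and the $m$ total derivatives $\p_i^\#(\la)$, and a direct expansion gives
\begin{align*}
\p_i^\#(\la)=\sum_{|\a|\leq r}(\p_i f_\a)\,y_\a+\sum_{|\a|\leq r} f_\a\,y_{\a^i}.
\end{align*}
So the coefficient of a generic jet coordinate $y_\beta$ (with $|\beta|\leq r+1$) inside $\p_i^\#(\la)$ has the form $\p_i f_\beta + f_{\beta-e_i}$, where $e_i$ is the $i$-th unit multiindex and either summand is dropped when the corresponding multiindex is not admissible. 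Verifying $x_0\in\SZ^{c-1}(\la^{(1)})$ then reduces to two routine checks on these coefficient functions.

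For the vanishing half, each coefficient of $\la^{(1)}$ is either an $f_\a$ (vanishing to order $c\geq c-1$) or a combination $\p_i f_\beta + f_{\beta-e_i}$. The same argument as in Lemma \ref{lem1:decreasing order of 0}, applied to $f_\beta$ viewed as a function on $\bbr^m$, gives that $\p_i f_\beta$ vanishes to order $c-1$ at $x_0$, while $f_{\beta-e_i}$ vanishes to order $c\geq c-1$ by hypothesis; hence every coefficient of $\la^{(1)}$ vanishes to order at least $c-1$ at $x_0$. For the sharpness half, write the hypothesized $\beta$ with $|\beta|=c+1$ as $\beta=\gamma+e_i$ for some $i$ and some $|\gamma|=c$. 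Then $\p^\gamma(\p_i f_{\a_0})(x_0)=\p^\beta f_{\a_0}(x_0)\neq 0$, whereas $\p^\gamma f_{\a_0-e_i}(x_0)=0$ because $f_{\a_0-e_i}$ vanishes to order $c$. Hence $\p^\gamma(\p_i f_{\a_0}+f_{\a_0-e_i})(x_0)\neq 0$, so the coefficient of $y_{\a_0}$ in $\p_i^\#(\la)$ fails to vanish to order $c$ at $x_0$. Combining gives $x_0\in\SZ^{c-1}(\la^{(1)})$.

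The ``in particular'' assertion is the same bookkeeping at the boundary instance of the above calculation: a coefficient of $\la^{(1)}$ is exhibited that does not vanish at $x_0$, so the fiber linear map $\la^{(1)}_{x_0}$ cannot be identically zero. The only real obstacle is the multiindex bookkeeping, in particular handling the boundary cases $\beta_i=0$ or $|\beta|=r+1$ in which one of the two summands in $\p_i^\#(\la)$ drops out; in each such case the estimates above still go through term by term, since dropping a term can only make a partial sum \emph{more} zero at $x_0$, not less.
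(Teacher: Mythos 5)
Your proof is correct and is essentially the same as the paper's: both expand $\p_i^\#\la=\sum_{|\a|\le r}(\p_if_\a)y_\a+\sum_{|\a|\le r}f_\a y_{\a_i}$ in the induced coordinates and observe that, upon applying $\p^\gamma$ at $x_0$ with $|\gamma|\le c$, the $\sum f_\a y_{\a_i}$ contribution vanishes by hypothesis, which reduces the problem to the ordinary partial derivative $\p_i\la$ handled by Lemma~\ref{lem1:decreasing order of 0}. The only cosmetic difference is that you isolate the coefficient of each $y_\beta$ explicitly as $\p_if_\beta+f_{\beta-e_i}$ and check vanishing/non-vanishing term by term, whereas the paper packages the same computation as the equivalence $\p^{\tl{\a}}(\p^\#_{i}\la)(p_0)=0\Leftrightarrow\p^{\tl{\a}}(\p_{i}\la)(p_0)=0$ and then cites Lemma~\ref{lem1:decreasing order of 0} for the sharpness half.
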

\begin{proof}
   So we have that $x_0$ satisfies $\p^{\a}\la(x_0)=0$ if $|\a|\leq
   c$ , and there exists multiindex $\beta$, with
   $|\beta|=c+1$ such that $\p^{\beta}\la(x_0)\not=0$.
   By Lemma (\ref{lem:loc coord of 1-prolong}) we only need to verify
   that for all $i,\a$ with $|\a|\leq c-1$,
   $\p^{\a}(\p_i^\#\la)(x_0)=0$ and there exists $i_0,\tl{\a}$ with
   $|\tl{\a}|=c$ such that
   $\p^{\tl{\a}}(\p_{i_0}^\#\la)(x_0)\not=0$.
   To this end, suppose that the following statement is valid.
   Let $d\in\bbn$ and suppose that for all $\a$ with $|\a|\leq d$, we have that
   that $\p^{\a}\la(p_0)=0$. Then for arbitrary given $i_0$
   and $\tl{\a}$ with $|\tl{\a}|=d$,
   $\p^{\tl{\a}}(\p^\#_{i_0}\la)(p_0)=0$ if and only if
   $\p^{\tl{\a}}(\p_{i_0}\la)(p_0)=0$. Then one can see that from this statement and Lemma
   \ref{lem1:decreasing order of 0}  the proof of our lemma follows immediately; so it suffices to prove the above statement.
   First of all, we need only to
   verify this statement for $\la=\la_P$, for $P=\sum_{|\a|\leq
   r}f_{\a}\p^{\a}$ where $f_{\a}\in\dcm$ for all $\a$; that is, if
   $\la=\sum_{|\a|\leq r}f_{\a}y_{\a}$. In the following calculations we will leave out evaluation at $x_0$; it is implicit. So

\begin{align}
   \p^\#_i(\la)=(\p_i+ \sum_{|\a|\leq r}y_{\a_i}\p_{y_{\a}})(\sum_{|\beta|\leq r}f_{\beta}y_{\beta})\notag   \\
   =\sum_{|\beta|\leq r}\p_i(f_{\beta})y_{\beta}+\sum_{|\a|\leq
   r}f_{\a}y_{\a_i}.
\end{align}
   That is
\begin{align}
   \p^\#_i(\la)=\sum_{|\a|\leq
   r}((\p_if_{\a})y_{\a}+f_{\a}y_{\a_i}).
\end{align}

   so that taking $\p^{\tl{\a}}$, for $|\tl{\a}|\leq d$, of both sides and interchanging
   derivatives, we get

\begin{align}\label{total der expression}
   \p^\#_i(\p^{\tl{\a}}\la)=\sum_{|\beta|\leq
   r}\p_i(\p^{\tl{\a}}f_{\beta})y_{\beta}+\p^{\tl{\a}}(f_{\beta})y_{\beta_i}.
\end{align}

   But by hypothesis, the second term, on the right side of (\ref{total der expression})
   is $0$ and the truth of the statement follows.
\end{proof}

\subsubsection{Higher prolongations; coordinate calculations}
   We want to prove a higher order prolongation version of the previous lemma. We need some preliminaries before we state the lemma.
   Suppose that $\a$ is a multiindex, let $\p_\a^\#$ denote the $\a^{th}$ iteration of the coordinate total derivatives; ie., if $\a=(\a_1,\ldots,\a_m)$ is a multiindex of order $k=\a_1+\cdots+\a_m$, then $\p^\#_\a=(\p^\#_1)^{\a_1}\circ\cdots\circ (\p^\#_m)^{\a_m}$, it being understood that if $\a_j=0$, then the corresponding $j^{th}$ factor is missing. As defined, $\p^\#_\a$ sends functions on $\SJ^r_{m,1}$ to functions on $\SJ^{r+k}_{m,1}$.
   Next, note that if $\la=\sum_\a f_\a y_\a$ is the symbol of $P\in LPDO_r$, then $\la^{(k)}$  is the symbol of the $r+k^{th}$ order operator $P^{(s)}$. As such, using the induced  coordinates $y_\a$, $|\a|\leq k$ on the range, $\SJ^k_{m,1}$, we can write $\la^{(k)}_x$ as $((\p_\a^\#\la)_x)_{|\a|\leq k}$. That is, $\la^{(k)}$ is given by the family of linear maps
\begin{align}
   x\mapsto ((\p^\#_\a\la)_x)_\a\;:\SJ^{r+k}_{m,1,x}\ra\SJ^k_{m,1,x}
\end{align}
   Note then that this family of maps vanishes to order $c$ at $x_0$ precisely when for each $\a$, $|\a|\leq r+k$ the component $x\mapsto\p^\#_\a(\la)_x$ vanishes to order $c$.
   Given this, we have the following extension of the previous lemma to general prolongations.
\begin{lemma}
   Suppose that the bundle map $\la:\SJ^r_{m,1}\ra\bbr_m$ is the symbol of $P\in LPDO_r$ and suppose that $x_0\in \SZ^c(\la)$. Then $\la^{(c+1)}_{x_0}$ is a nonzero linear map.
\end{lemma}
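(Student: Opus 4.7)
My plan is to iterate Lemma \ref{lem2:decreasing order of 0} exactly $c$ times to reduce to the vanishing-order-zero case, then close with a direct one-step coordinate computation. I would proceed by induction on $c$ with the inductive hypothesis: for any scalar linear bundle map $\mu$ that is the symbol of some element of $LPDO_s$, and any $y_0\in\SZ^d(\mu)$ with $d<c$, $\mu^{(d+1)}_{y_0}$ is a nonzero linear map.

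For the base case $c=0$, I would unpack the definitions directly. Writing $\la=\sum_\beta f_\beta y_\beta$ in the induced local coordinates, the hypothesis $x_0\in\SZ^0(\la)$ says that every $f_\beta(x_0)=0$ while for some $i_0$ and $\beta_0$ one has $(\p_{i_0}f_{\beta_0})(x_0)\neq 0$. Substituting this into the total-derivative formula (\ref{total der sum}) yields, at any fiber point $p$ over $x_0$,
\begin{align*}
(\p^\#_{i_0}\la)(p)=\sum_\beta (\p_{i_0}f_\beta)(x_0)\,y_\beta(p)+\sum_\beta f_\beta(x_0)\,y_{\beta_{i_0}}(p);
\end{align*}
the second sum vanishes identically by hypothesis, while the first is a nonzero linear functional on $\SJ^{r+1}_{m,1,x_0}$. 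By Lemma \ref{lem:loc coord of 1-prolong} this expression is the $i_0$-component of $\la^{(1)}_{x_0}$, so $\la^{(1)}_{x_0}\neq 0$, as required.

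For the inductive step, assume the hypothesis for all vanishing orders strictly less than $c$ and suppose $x_0\in\SZ^c(\la)$ with $c\geq 1$. Lemma \ref{lem2:decreasing order of 0} gives $x_0\in\SZ^{c-1}(\la^{(1)})$, read componentwise: each scalar component $\p^\#_\a\la$ of $\la^{(1)}$ (for $|\a|\leq 1$) is itself the symbol of an element of $LPDO_{r+1}$, and at least one such component vanishes to order exactly $c-1$ at $x_0$. The inductive hypothesis applied to that component yields that its $c$-fold prolongation is nonzero at $x_0$. Since $(\la^{(1)})^{(c)}=\la^{(c+1)}$ up to the canonical identification of iterated jets, this delivers $\la^{(c+1)}_{x_0}\neq 0$.

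The main obstacle is the bookkeeping forced by $\la^{(1)}$ being vector-valued: Lemma \ref{lem2:decreasing order of 0} is stated for symbols of \emph{scalar} LPDOs, whereas $\la^{(1)}$ already takes values in $\SJ^1_{m,1}$. This forces the lemma to be applied componentwise and the inductive hypothesis to be phrased for arbitrary scalar linear symbols coming from $LPDO_s$ for arbitrary $s$. A secondary point is the identification $(\la^{(1)})^{(c)}=\la^{(c+1)}$, which follows from the definition of prolongation (\ref{eqn:coord free prolng formula}) together with the universal property of iterated jet spaces.
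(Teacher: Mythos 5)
Your proposal is correct, but it follows a genuinely different route from the paper's. You iterate the one-step descent of Lemma \ref{lem2:decreasing order of 0} by induction on the vanishing order $c$, reducing to a direct computation at $c=0$. The paper instead proves, in one shot, a Leibniz-type formula for the iterated total derivative of a monomial $g\,y_\a$,
\begin{align*}
\p^\#_\g(gy_\a)=\sum_{\e+\rho=\g}\p_\e g\cdot y_{\a+\rho},
\end{align*}
and then evaluates $\p^\#_{\bar\beta_{i_0}}(\la)_{x_0}$ directly; the vanishing hypothesis kills every summand except $\p_{\bar\beta_{i_0}}f_\a(x_0)\,y_\a|_{x_0}$, and linear independence of the $y_\a|_{x_0}$ plus nonvanishing of one of these derivative coefficients finish the proof. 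In effect, the paper picks out the single component $\p^\#_{\bar\beta_{i_0}}\la$ of $\la^{(c+1)}$ that is forced to be nonzero, rather than descending one order at a time. Your route is more modular but transfers the load to two points you flag yourself: (i) Lemma \ref{lem2:decreasing order of 0} must be read componentwise since $\la^{(1)}$ is $\SJ^1_{m,1}$-valued, and the induction hypothesis must be formulated over all $LPDO_s$; and (ii) the step "$(\la^{(1)})^{(c)}=\la^{(c+1)}$" is not a literal equality since the targets $\SJ^c_{m,m+1}$ and $\SJ^{c+1}_{m,1}$ differ; what you actually need, and what does hold, is that the scalar components $\p^\#_\g\circ\p^\#_{i_0}\la=\p^\#_{\g_{i_0}}\la$ of $(\p^\#_{i_0}\la)^{(c)}$ (using commutativity of the $\p^\#_i$) all appear among the components $\p^\#_\d\la$, $|\d|\leq c+1$, of $\la^{(c+1)}$, so nonvanishing of the former implies nonvanishing of the latter. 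With that clarification your argument closes cleanly; the paper's one-step Leibniz computation trades the bookkeeping of iterated prolongations for a single explicit identity.
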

\begin{proof}
   By the remarks before the lemma, it suffices to prove that $\la^{(c+1)}_{x_0}$ has a nonzero component at $x_0$; that is, for some $\g$ with $|\g|\leq c+1$, $\p^\#_\g(\la)_{x_0}$ is a nonzero linear map. Note also that if $\a,\beta$ are distinct multiindices, then $\p^\#_\a$
   First note that if $g\in\dcm$ and  $\a,\g$ are appropriate multiindices, then
\begin{align}
   \p^\#_\g(gy_\a)=\sum_{\e+\rho=\g}\p_\e g\cdot y_{\a+\rho}
\end{align}
   Suppose that $g$ vanishes to order (exactly) at $x_0$; so that there is an index $i_0$ and multiindex $\bar{\beta}$ with $|\bar{\beta}|=c$, such that $\p_\a g(x_0)=0$ for $|\a|\leq c$ and $\p_{\bar{\beta}_{i_0}}g(x_0)\not= 0$. Then $\p^\#_{\bar{\beta}_{i_0}}(gy_\a)=\p_{\bar{\beta}_{i_0}}g\cdot y_\a$.
   This follows upon inspection: $\beta_{i_0}$ is the only multiindex of length $c+1$ occurring in the sum. All other multiindices occurring are of length $\leq c$ and so these terms are zero by the hypotheses on $g$.

   So now consider a general symbol $\la=\sum_{|\a|\leq r}f_\a y_\a$ and suppose, by hypothesis that $x_0\in \SZ^c(\la)$. So there exists a multiindex $\a_0$ with $|\a_0|\leq r$, a multiindex $\bar{\beta}$ of order $c$ and an index  $i_0\in\{1,\ldots,m\}$ such that $\p_{\bar{\beta}_{i_0}}f_{\a_0}(x_0)\not=0$. We will show that the $\bar{\beta}_{i_0}$ component of $\la^{(c+1)}_{x_0}$ is nonzero; ie., that $\p^\#_{\bar{\beta}_{i_0}}(\la)_{x_0}$ is a nonzero linear map. Now
\begin{align}
   \p^\#_{\bar{\beta}_{i_0}}(\la)_{x_0}=\sum_{|\a|\leq r}\p^\#_{\bar{\beta}_{i_0}}(f_\a y_\a)_{x_0}\qquad\quad\qquad\qquad \notag \\
   \qquad\qquad =\sum_{|\a|\leq r}\sum_{\g\leq\bar{\beta}_{i_0}}\p_\g(f_\a)(x_0)(y_{\a+(\bar{\beta}_{i_0}-\g)})_{x_0} \notag \\
   \qquad =\sum_{|\a|\leq r}\sum_{\substack{\g\leq\bar{\beta}_{i_0}\\|\g|=c+1}}\p_\g(f_\a)(x_0)(y_{\a+(\bar{\beta}_{i_0}-\g)})_{x_0}\notag \\
   =\sum_{|\a|\leq r}\p_{\bar{\beta}_{i_0}}f_\a(x_0)y_\a|_{x_0}. \quad\quad \qquad\qquad
\end{align}
   But the linear forms $y_\a|_{x_0}$ are linearly independent and the coefficient of $y_{\a_0}|_{x_0}$ is nonzero by hypothesis, hence the conclusion follows.
\end{proof}

\begin{corollary}\label{cor:removing finite zeroes}
  Suppose that $P\in LPDO_r$ with $\la\in
   C^{\infty}(\SJ^r_{m,1},\bbr_m)$
   the symbol of $P$ . Suppose that $c_0=\sup\{c\in\bbn:\SZ^c_{\la}\;\text{is nonempty}\}$
   is finite.
   Then  $\SZ(\la^{(c_0+1)})$ is empty. That is, for each $x\in\bbr^m$, $rank(\la^{(c_0)}_x)\geq 1$.
\end{corollary}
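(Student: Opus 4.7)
My plan is to reduce the corollary to the previous lemma by a pointwise case analysis, using only a monotonicity property of prolongations under restriction.

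First I would record a monotonicity observation from the commuting square \eqref{diag1}: for any $k,j\geq 0$, the fiberwise projection $\pi^{k+j}_k$ sends $\la^{(k+j)}_x$ to $\la^{(k)}_x$. Consequently, if $\la^{(k)}_x$ is a nonzero linear map then so is $\la^{(k+j)}_x$; contrapositively, vanishing of $\la^{(c_0+1)}_x$ forces vanishing of every lower prolongation $\la^{(k)}_x$ for $k\leq c_0+1$, including $\la_x$ itself. This is the only structural ingredient beyond the preceding lemma.

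Next I would fix an arbitrary $x\in\bbr^m$ and split on the vanishing behavior of $\la$ at $x$. If $\la_x\neq 0$, monotonicity immediately gives $\la^{(c_0+1)}_x\neq 0$, so $x\notin\SZ(\la^{(c_0+1)})$. Otherwise $\la_x$ is the zero linear map, and $\la$ has some finite vanishing order $c(x)$ at $x$, so $x\in\SZ^{c(x)}(\la)$. By the definition of $c_0$ as the supremum of $c$'s with $\SZ^c(\la)$ nonempty, and the hypothesis that $c_0$ is finite, we have $c(x)\leq c_0$. The preceding lemma then furnishes $\la^{(c(x)+1)}_x\neq 0$, and monotonicity lifts this to $\la^{(c_0+1)}_x\neq 0$. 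Since $x$ was arbitrary, $\SZ(\la^{(c_0+1)})$ is empty, from which the rank assertion follows.

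The only delicate point, and what I expect to be the main obstacle, is handling the logical gap between the stated hypothesis and the implicit assumption that every point has \emph{finite} vanishing order. The supremum condition $c_0<\infty$ forbids finite vanishing orders larger than $c_0$ but does not, on its face, exclude points where all coefficients are flat (vanish to infinite order), and such points would belong to no $\SZ^c(\la)$ yet violate the conclusion. I would flag this and argue that in the intended setting (symbols with finite vanishing locus, as used later in the Todorov-type application) such flat points do not occur, so the case split above is exhaustive; alternatively the statement should be read as restricting attention to the finite-vanishing locus. Verifying the monotonicity claim from \eqref{diag1} directly on the coordinate expression from Lemma \ref{lem:loc coord of 1-prolong} is routine but should be carried out explicitly to be sure that the $\a=0$ component of $\la^{(k)}$ is genuinely $\la$ itself, which is what makes the projection argument go through.
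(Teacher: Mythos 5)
Your proof takes the same route the paper takes: the paper's entire proof is ``This is an immediate consequence of the above lemma,'' and you have simply filled in what ``immediate'' covers, namely the pointwise case split and the monotonicity of prolongations under the projections $\pi^{s}_{s'}$. Your monotonicity step is correct and is exactly what makes the case $\la_x\neq 0$ trivial; one can also see it by noting that the $\a=0$ component of $\la^{(k)}_x$ is $\la_x$ itself, as you observe at the end.

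The gap you flag is genuine and is not addressed by the paper either. The sets $\SZ^c(\la)$ are, by definition, the loci of \emph{exact} finite vanishing order $c$; a point $x$ at which every coefficient of $\la$ is flat (all partial derivatives in the base directions vanish to all orders) lies in no $\SZ^c(\la)$, so it places no constraint on $c_0$, yet at such a point every $\p^\#_\g\la$ also vanishes, so $\la^{(k)}_x=0$ for all $k$ and the conclusion fails. The hypothesis $c_0<\infty$ as written therefore does not suffice; the intended reading must be that $\la_x=0$ implies $x\in\SZ^c(\la)$ for some finite $c$ (equivalently, no coefficient is flat at a common zero of all coefficients), which is what makes the case split exhaustive. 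With that reading your argument is complete and matches the paper's intent.
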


\begin{proof}
   This is an immediate consequence of the above lemma.

\end{proof}

\section{ Standard Geometry: Prolongation and rank}\label{section: standard jet work,prolong and rank}
   In section 3, we were concerned with the vanishing order of the (total) symbol of an element of $LPDO_r$. Our proofs involved calculations with the induced local coordinate formulation of prolongations of jet bundle maps. In this section, we will prove the standard results needed in the proof of Corollary \ref{cor: infinite order Todorov}. Here, we are a bit more tradionally concerned with the prolongation effects of regularity hypotheses on the principal symbol of an element of $LPDO_r$.
   Our constructions will instead be in the tradition of diagram chasing through commutative diagrams of jet bundles.

   Here we will show that if the principal symbol
   $\un{\la}:\SJ^r_{m,1,x}\ra\bbr_m$ is nonvanishing, then for each
   $k\in\bbn$, $\la^{(k)}:\SJ^{r+k}_{m,1,x}\ra\SJ^k_{m,1,x}$ is
   maximal rank. We will use this fact in constructing solutions for
   $P(f)=g$. We will first look at a coordinate argument that seems
   to indicate this. We will follow this with a simple version with a proof of this fact
   using a  typical jet bundle argument.

   Suppose that $\la:\SJ^r_{m,1}\ra\bbr_m$ is the symbol of $P\in
   LPDO_r$. Let's begin by looking at first order prolongations. Suppose that $x_0\in \bbr^m$ and that $\la_{x_0}:\SJ^r_{m,1,x_0}\ra\bbr_{x_0}$
   is nonzero in the sense that some coefficient $a_{\bar{\a}}$, for $|\bar{\a}|=r$ is nonzero at $x_0$.  . Then we will verify the
   first prolongation of $\la$,
   $\la^{(1)}_{x_0}:\SJ^{r+1}_{m,1,x_0}\ra\SJ^1_{m,1,x_0}$ has ``top order part'' of
   maximal rank.
    So we need to show that the rank of the ``top order part'' of
   $\la^{(1)}_{x_0}:\SJ^{r+1}_{m,1,x_0}\ra\SJ^1_{m,1,x_0}$ is $m$ as this
   is the dimension of the fiber of $\SJ^1_{m,1}$. Write
   $\la_{x}=\sum_{|\a|\leq r}a_{\a}(x)y_{\a}$ and, in coordinates,

\begin{align}
\la^{(1)}_{x_0}=(\la_{x_0},\p^{\#}_1\la_{x_0},\ldots,\p^{\#}_m\la_{x_0})
\end{align}
   where, as before  for each $i$,

\begin{align}\label{eqn:sum formula;tot der}
    \p^{\#}_i\la_{x_0}=\sum_{|\a|\leq r}(\p_i
    a_{\a}(x_0)y_{\a}+a_{\a}(x_0)y_{\a_i}).
\end{align}
   where by assumption $a_{\tl{\a}}(x_0)\not=0$ for
   some $\tl{\a}$ with $|\tl{\a}|=r$. But then the linear forms
   $a_{\tl{\a}}(x_0)y_{\tl{\a}_i}$ for $i=1,\ldots,m$ are linearly
   independent. Therefore, the linear forms $\p^{\#}_i\la_{x_0}$, by
   their above expressions (\ref{eqn:sum formula;tot der}), are also
   linear independent. Hence, their image spans the fiber of
   $\SJ^1_{m,1}\ra\bbr^m$ over $x_0$.

   Given that $\la$ is the (total) symbol of $P\in LPDO_r$; the ``top order part'' of $\la$, $\un{\la}$ is the principal symbol of $P$. We need a little more jet stuff to properly define the principal symbol and proceed with the general statement for arbitrary prolongations.
   The set of $j^{k+1}_xf$ of $f\in\dcm$ that vanish to $k^{th}$ order at $x$ have a
    canonical $\bbr$ vector space identification with $\bsm{\mathcal S_{k+1} T^*_x}$, the ${k+1}^{st}$ symmetric power of $T^*_x$,
    the cotangent space to $\bbr^m$ at $x$. (See Pommaret, \cite{Pommaret1978}, p47, 48.) In fact, for every $x\in\bbr^m$ and $k\in\bbn$ we have a canonical injection of vector spaces,
    $\mathcal S_{k}T^*_x\stackrel{i_{k}}{\ra}\SJ^{k}_{m,1,x}$ which is a canonical isomorphism
    when $k=1$, ie., $T^*_x\cong \SJ^1_{m,1,x}$. This injection embeds in an exact sequence:
 \begin{align}
      0\ra\mathcal S_{k+1}T^*_x\stackrel{i_{k+1}}{\ra}\SJ^{k+1}_{m,1,x}\stackrel{\pi^{k+1}_k}{\ra}\SJ^k_{m,1,x}\ra 0.
 \end{align}

   In fact this and much of the following holds in far greater generality; eg., giving exact sequences of
   jets of bundles over any paracompact smooth manifold. Next, when $k=r$ the order of our operator,
   note that expression (\ref{eqn:sum formula;tot der}) when
   evaluated on  $j^{r+1}_xf\in\mathcal S_{k+1}T^*_x$ gives
 \begin{align}\label{eqn: principal symbol sum}
      \p^\#_i\la_{x}(j^{k+1}_xf)=\sum_{|\a|\leq
      r}a_\a(x)y_{\a_i}(j^{k+1}_xf),
 \end{align}
   the other terms being zero as $j^k_xf=0$.
   Now for each $k=0,1,2,\ldots$, the \textbf{principal symbol of} $\bsm{P^{(k)}}$, denoted
   $\bsm{\un{\la}^{(k)}}:\mathcal S_{r+k}T^*_x\ra\mathcal S_{k}T^*_x$ is the $\bbr$
   linear map induced by the restriction to $\mathcal S_kT^*_x$ of
   $\la^{(k)}$. As the principal symbol
   $\un{\la}:\mathcal S_{r}T^*_x\ra\bbr_{m,x}$ can be represented by $\sum_{|\a|=r}a_\a y_\a$,
    then $\un{\la}^{(1)}:\mathcal S_{r+1}\ra T^*_x$ can be written $\sum_i\sum_{|\a|=r}a_{\a}
    y_{\a_i}\otimes dx_i$. See the discussion below.
    Also note that the linear maps $y_\a|_{\mathcal S_{k}T^*_x}$ decomposes as the
   $\a$ symmetric product of the coordinate cotangent vectors and  we
   have the canonical $\bbr$ vector space identification $\mathcal S_{r+1}T^*_x\cong
   \mathcal S_r T^*_x\otimes T^*_x$. With these preliminaries we can prove
   the following.
\begin{lemma}
   Suppose that for  $x\in\bbr^m$,
   $\un{\la}_x:\SJ^r_{m,1,x}\ra\bbr_{m,x}$ is nonzero, ie. a surjection. Then
   $\la^{(k)}$ is a surjection for all $k\in\bbn$.
\end{lemma}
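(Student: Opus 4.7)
The plan is to argue by induction on $k$, using a commutative diagram of short exact sequences together with the surjectivity of the prolonged principal symbol $\underline{\lambda}^{(k)}$. The base case $k = 0$ is immediate: since $\mathbb{R}_{m,x}$ is one-dimensional and $\underline{\lambda}_x$ is nonzero on $\mathcal{S}_r T^*_x \subset \mathcal{J}^r_{m,1,x}$, already $\lambda_x = \lambda^{(0)}_x$ surjects onto $\mathbb{R}_{m,x}$.

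For the inductive step, I would assemble the commutative diagram
\begin{align*}
\begin{CD}
 0 @>>> \mathcal{S}_{r+k}T^*_x @>i_{r+k}>> \mathcal{J}^{r+k}_{m,1,x} @>\pi^{r+k}_{r+k-1}>> \mathcal{J}^{r+k-1}_{m,1,x} @>>> 0 \\
 @.  @VV\underline{\lambda}^{(k)}V  @VV\lambda^{(k)}V  @VV\lambda^{(k-1)}V  @. \\
 0 @>>> \mathcal{S}_k T^*_x @>i_k>> \mathcal{J}^k_{m,1,x} @>\pi^k_{k-1}>> \mathcal{J}^{k-1}_{m,1,x} @>>> 0
\end{CD}
\end{align*}
whose rows are the exact sequences recalled just before the lemma. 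Commutativity follows from the coordinate-free definition $\lambda^{(s)}(j^{r+s}_x f) = j^s_x(\lambda \circ j^r f)$: the right square is the naturality of truncation under the prolonged operator, and the left square records the fact that $\lambda^{(k)}$ carries jets vanishing to order $r+k-1$ at $x$ into jets vanishing to order $k-1$ at $x$, where both sides are identified with the appropriate symmetric powers of $T^*_x$ via $i_{r+k}$ and $i_k$. With this diagram in hand, the five lemma (applied to the diagram padded by zeros) shows that if the left and right vertical maps are surjective, then so is the middle; since $\lambda^{(k-1)}$ is surjective by the inductive hypothesis, it remains only to show that $\underline{\lambda}^{(k)}$ is surjective.

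The surjectivity of $\underline{\lambda}^{(k)}:\mathcal{S}_{r+k}T^*_x \to \mathcal{S}_k T^*_x$ is a purely algebraic fact. Writing $\underline{\lambda}$ as the homogeneous polynomial $p(\xi)=\sum_{|\alpha|=r}a_\alpha(x)\xi^\alpha$ on $T_x$, the principal symbol prolongation is the natural contraction map whose $\mathbb{R}$-linear dual $(\underline{\lambda}^{(k)})^*:\mathcal{S}_k T_x \to \mathcal{S}_{r+k}T_x$ is multiplication by $p$ inside the symmetric algebra $\mathcal{S}T_x$. Since $\mathcal{S}T_x$ is a polynomial ring, hence an integral domain, multiplication by the nonzero element $p$ is injective, so its transpose $\underline{\lambda}^{(k)}$ is surjective. (Alternatively, one can argue directly from the explicit coordinate expression generalizing (\ref{eqn: principal symbol sum}) to higher $k$, as was done in the warm-up for $k=1$.)

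The main obstacle, I expect, is verifying commutativity of the left square in the diagram carefully — i.e., that under the canonical identification $i_{r+k}$ of $\mathcal{S}_{r+k}T^*_x$ with the kernel of $\pi^{r+k}_{r+k-1}$, the restriction of $\lambda^{(k)}$ really does coincide with the intrinsically defined principal symbol $\underline{\lambda}^{(k)}$. This can either be checked via the coordinate formulas for total derivatives given by (\ref{total der sum}) and Lemma \ref{lem:loc coord of 1-prolong}, iterated $k$ times, or more invariantly by noting that if $j^{r+k}_x f \in \mathcal{S}_{r+k}T^*_x$ then $j^{r+k-1}_x f = 0$ and so only the top-order terms of $\lambda \circ j^r f$ survive at $x$, giving exactly the contraction $p \cdot j^{r+k}_x f$.
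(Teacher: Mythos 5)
Your proposal is correct and follows the same overall strategy as the paper's proof: induction on the prolongation order $k$, organized around a commutative diagram of short exact sequences linking $\la^{(k-1)}$, $\la^{(k)}$, and the prolonged principal symbol, with the surjectivity of $\un{\la}^{(k)}$ as the crucial input. (Your diagram is the transpose of the paper's, with the jet exact sequences written as rows rather than columns, but this is purely cosmetic; and your appeal to the four/five lemma is just a compressed version of the paper's explicit diagram chase.) Where you genuinely diverge is in the proof that $\un{\la}^{(k)}$ is surjective. You argue by duality: the transpose of $\un{\la}^{(k)}$ is multiplication by the nonzero homogeneous polynomial $p$ inside the symmetric algebra $\mathcal{S}\,T_x$, which is a polynomial ring and hence an integral domain, so multiplication by $p$ is injective and therefore $\un{\la}^{(k)}$ is surjective. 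The paper instead cites Pommaret for the decomposition $\un{\la}^{(k)}_x = \un{\la}_x \otimes 1|_{\mathcal{S}_k T^*_x}$ and invokes that a tensor product of surjections is a surjection. Strictly speaking the cited decomposition presents $\un{\la}^{(k)}$ as the restriction of $\un{\la}\otimes 1$ along the canonical inclusion $\mathcal{S}_{r+k}T^*_x \hookrightarrow \mathcal{S}_r T^*_x \otimes \mathcal{S}_k T^*_x$, so the ``tensor product of surjections'' observation alone does not finish; one still needs to know that the image of that inclusion maps onto $\mathcal{S}_k T^*_x$. Your integral-domain argument closes exactly this gap and is self-contained, making it the tighter route for this sub-step, even though the inductive scaffolding above it is the same.
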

\begin{proof}
   The remark above allows us to decompose  the expression for
   $\un{\la}^{(1)}$ as $1_{T^*_x}\otimes\un{\la}$. Actually this
   holds at all levels.(SEE POMMARRET, \cite{Pommaret1978} p193) That is,
  \begin{align}
     \un{\la}_x^{(k)}=\un{\la}_x\otimes 1|_{\mathcal S_kT^*_x}.
  \end{align}
   But note that in the category of finite dimensional $\bbr$ vector
   spaces, the tensor product of surjections is a surjection, hence
   by hypothesis $\un{\la}^{(k)}$ is a surjection for each
   $k\in\bbn$.

   We will prove, by induction on  $k$, the order of prolongation,
   that $\la^{(k)}$ is a surjection. The result holds for $k=0$.
   Suppose that it holds for some $k\geq 0$. We will prove that it
   holds for $k+1$.
    By the inductive hypothesis, the remark on $\un{\la}^{(k)}$ directly above and general facts on jets,  we have a commutative
   diagram of exact sequences of linear maps over $x$\\
\begin{align}\label{diag2}
    \begin{CD}
                    0         @.                         0\\
                   @VVV                                    @VVV\\
                 \mathcal S_{r+k+1}T^*_x          @> \un{\la}^{(k+1)} >>\mathcal S_{k+1}T^*_x  @>>> 0\\
                 @VV\i_{r+k+1}V                            @VV\i_{k+1}V\\
                 \SJ^{r+k+1}_{m,1,x}      @>\la^{(k+1)} >>  \SJ^{k+1}_{m,1,x}    \\
                @VV\pi^{r+k+1}_{r+k} V                               @VV\pi^{k+1}_{k} V\\
                \SJ^{r+k}_{m,1,x}          @>\la^{(k)}      >>
                \SJ^k_{m,1,x}    @>>> 0\\
                @VVV                                    @VVV\\
                    0            @.                     0\\
\end{CD}
\end{align}
   and we wish to prove that the middle row is a surjection. Suppose
   that $\eta_{k+1}\in\SJ^{k+1}_{m,1,x}$. We will find
   $\z\in\SJ^{r+k+1}_{m,1,x}$ such that $\la^{(k+1)}(\z)=\eta_{k+1}$.
   The proof will be a typical 'diagram chase'. Let
   $\eta_k=\pi^{k+1}_k(\eta_{k+1})$. Then, by hypothesis, there
   exist $\eta_{r+k}\in\SJ^k_{m,1,x}$ such that
   $\la^{(k)}(\eta_{r+k})=\eta_k$. Let $\eta_{r+k+1}\in
   (\pi^{r+k+1}_{r+k})^{-1}(\eta_{r+k})$. So  commutativity of
   the lower square implies that
  \begin{align}
     \pi^{k+1}_k\circ\la^{(k+1)}(\eta_{r+k+1})=\la^{(k)}\circ\pi^{r+k+1}_{r+k}(\eta_{r+k+1})=\eta_{k}= \pi^{k+1}_{k}(\eta_{k+1}).
  \end{align}
   That is, 1) $\pi^{k+1}_k(\la^{(k+1)}(\eta_{r+k+1})-\eta_{k+1})=0$, and so by exactness of the right sequence, there is
   $\s_{k+1}\in\mathcal S_{k+1}T^*_x$ such that
  \begin{align}\label{eqn:comm diagram,1}
     i_{k+1}(\s_{k+1})=\la^{(k+1)}(\eta_{r+k+1})-\eta_{k+1}
  \end{align}

    But $\un{\la}^{(k+1)}$ is
   surjective, ie., there exists $\s_{r+k+1}\in\mathcal S_{r+k+1}T*_x$
   such that \\ $\un{\la}^{(k+1)}(\s_{r+k+1})=\s_{k+1}$.
   So by this and commutativity of the top square, we have
  \begin{align}\label{eqn:comm diagram,2}
     \la^{(k+1)}\circ i_{r+k+1}(\s_{r+k+1})=i_{k+1}\circ
     \un{\la}^{(k+1)}(\s_{r+k+1})=i_{k+1}(\s_{k+1}).
  \end{align}
   Combining the equivalences in (\ref{eqn:comm diagram,1}) and (\ref{eqn:comm
   diagram,2}), we get
  \begin{align}
     \la^{(k+1)}(\eta_{r+k+1}-i_{r+k+1}(\s_{r+k+1}))=\eta_{k+1}.
  \end{align}
   That is, $\z\doteq\eta_{r+k+1}-i_{r+k+1}(\s_{r+k+1})$ is the
   element of $\SJ^{r+k+1}_{m,1,x}$ we are looking for.
\end{proof}
   Note that we did not use the full strength of our setting; we did not use
   the exactness of the left vertical sequence.

\begin{corollary}\label{lem:symb prolong is surj}
   Suppose that $\la:\SJ^r_{m,1}\ra\bbr_m$ is such that $\un{\la}_{x_0}:\SJ^r_{m,1,x}\ra\bbr_{x_0}$ is nonzero as in the previous
   lemma. Then if $g\in\dcm$, $j^k_{x_0}g\in Im(\la^{(k)}_{x_0})$, for
   every $k\in\bbn_0$.
\end{corollary}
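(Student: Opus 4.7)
The plan is that this corollary should follow immediately from the preceding lemma, essentially by unpacking what surjectivity of a linear map means. First I would observe that the previous lemma, applied at the point $x_0$ under our hypothesis on $\un{\la}_{x_0}$, yields that the fiber map
\begin{align}
   \la^{(k)}_{x_0}:\SJ^{r+k}_{m,1,x_0}\ra\SJ^k_{m,1,x_0}
\end{align}
is an $\bbr$-linear surjection for every $k\in\bbn_0$. (For $k=0$ this is simply the hypothesis that $\un{\la}_{x_0}$ is nonzero, since the codomain $\bbr_{m,x_0}$ is one-dimensional so ``nonzero'' and ``surjective'' agree.)

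Next, given $g\in\dcm$, I would note that by the definition of the bundle $\SJ^k_{m,1}$ recalled in the previous section, the $k$-jet $j^k_{x_0}g$ lies in the fiber $\SJ^k_{m,1,x_0}$, which is precisely the codomain of $\la^{(k)}_{x_0}$. Combining this with the surjectivity above, there exists some $\z\in\SJ^{r+k}_{m,1,x_0}$ with $\la^{(k)}_{x_0}(\z)=j^k_{x_0}g$, i.e.\ $j^k_{x_0}g\in\mathrm{Im}(\la^{(k)}_{x_0})$, which is what we wanted.

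There is no real obstacle here; the entire content is the preceding lemma, and this corollary is essentially a change of language from ``$\la^{(k)}_{x_0}$ is surjective'' to ``every $k$-jet at $x_0$ of a smooth function lies in the image of $\la^{(k)}_{x_0}$.'' The only subtle point worth a line in the write-up is the $k=0$ case, where one has to explicitly remark that nonzero linear maps into a one-dimensional target are surjective so that the previous lemma really does apply as stated.
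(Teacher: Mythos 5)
Your proposal is correct and matches the paper's approach; the paper simply declares the corollary an immediate consequence of the surjectivity lemma, and you have merely spelled out why (the $k$-jet lives in the codomain, so surjectivity gives a preimage), including the harmless clarification at $k=0$.
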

\begin{proof}
   This is an immediate consequence of the previous lemma.
\end{proof}
   This corollary will allow us to extend Todorov's pointwise equality  to an infinite jetwise equality in the next section.
\section{The Main Linear Theorem}
   Before we begin the transfer of our results to the nonstandard world, we need in place a bit more of the framework for the infinite jet results in this section. First, let $N_s$ denote the finite set of
   multiindices of length $m$ and weight less than or equal to $s$\;;
   ie., indexing the fiber jet coordinates for $\SJ^r_{m,1}$. Let
   $\ov{N_s}\subset N_s$ denote the subset of multiindices of length
   equal to $\a$.
   Given the notational material, we now examine how the lifting works.
   Todorov proved a result crudely stated as follows: given $g$, there exists nonstandard $f$ such that $P(f)(x)=g(x)$ at each standard $x$. Our intention is to prove that such $f$ exists such that $j^s_x(P(f))=j^s_xg$ for all standard $x$ and all $s\in\;^\s\bbn$. This will be a consequence of the material in the previous section, the transfer of the Borel lemma and a bit more standard preliminaries.
    The mapping $\la^{(s)}$ can be seen as the intermediary of $j^s(P(f))$ as follows. If $s\in\bbn$, and $|\a|\leq s$, we have that
\begin{align}
   j^s_{x_0}(P(f))=P^{(s)}(f)(x_0)=\la^{(s)}_{x_0}(j^{r+s}_{x_0}(f))=j^s_{x_0}(\la\circ j^rf)=j^s_{x_0}g.
\end{align}
   We can therefore get a good estimate on the size of the range the successive prolongations of the range of $P$ at $x_0$ by watching the mapping properties of $\la^{(s)}$.


    We will denote by  $\bsm{\la^{(\infty)}}$ the infinite prolongation of
    $\la$ given by
\begin{align}
   j^{r,\infty}_xf\mapsto j^{\infty}_x(\la\circ j^rf): \SJ^{(r,\infty)}_{m,1,x}\ra\SJ^{(\infty)}_{m,1,x}
\end{align}
     where $j^{r,\infty}_xf=(j^r_xf,j^{r+1}_xf,j^{r+2}_xf,\ldots)$,
     ie., $\la^{(\infty)}$ being the map whose components are
     already defined.

    In this section and the next the transfer of the Borel Lemma will be used. Here is a statement of the version we will use.
\begin{lemma}[Borel Lemma]\label{lem: borel}
   Let $x\in\bbr^m$ and suppose that $\phi\in\SJ^\infty_{m,1,x}$. Then there exists $f\in\dcm$ such that $\phi = j^\infty_xf$.
\end{lemma}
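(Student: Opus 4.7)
The plan is to prove the classical Borel lemma by explicit construction. After translation, I may assume $x=0$. An element $\phi\in\SJ^\infty_{m,1,0}$ can be identified with a sequence of compatible Taylor jets, and by the canonical identification $\SJ^k_{m,1,0}\cong P_k(m,1)$ used earlier in the paper, this amounts to a formal power series $\sum_\a c_\a x^\a$ with arbitrary real coefficients $c_\a$, one for each multiindex $\a$. The task is to produce $f\in\dcm$ whose derivatives at $0$ match: $\p^\a f(0)=\a!\,c_\a$ for every $\a$.

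First I would fix a cutoff function $\chi\in C^\infty_c(\bbr^m)$ with $\chi\equiv 1$ on a neighborhood of $0$ and $\operatorname{supp}\chi\subset B(0,1)$. For a positive real parameter $\la$, set $\chi_\la(x)=\chi(\la x)$, so that $\chi_\la\equiv 1$ near $0$ while $\chi_\la$ is supported in a ball of radius $1/\la$. Then define tentatively
\begin{align}
   f(x)=\sum_{\a} c_\a\,x^\a\,\chi_{\la_\a}(x),
\end{align}
where the sequence $\la_\a$ (indexed by multiindices, ordered by $|\a|$) is to be chosen. The idea is that each summand vanishes outside a shrinking ball, so for every $x\neq 0$ only finitely many terms contribute; and near $0$ the cutoff is $1$, so the summand equals $c_\a x^\a$, which is what is needed to reproduce the correct Taylor jet.

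The main technical step, and the principal obstacle, is to choose $\la_\a$ large enough that the series and all its termwise derivatives converge uniformly. I would estimate $\|c_\a\,x^\a\,\chi_{\la_\a}(x)\|_{C^k}$ using Leibniz: a derivative of order $\leq k$ applied to $x^\a\chi_{\la_\a}$ produces, schematically, polynomials times derivatives of $\chi_{\la_\a}$, which carry factors of $\la_\a^j$ (for $j\leq k$) but are supported where $|x|\leq 1/\la_\a$. Combining, the $C^k$-norm of the $\a$-summand is bounded by $C_{k,\chi}|c_\a|\la_\a^{k-|\a|}$ for $|\a|\geq k$. Choosing $\la_\a\geq (2^{|\a|}(1+|c_\a|))^{1/\max(1,|\a|-k)}$ for each $k\leq |\a|-1$, one can arrange, by a diagonal selection, that for every fixed $k$ the tail $\sum_{|\a|>k}\|\cdot\|_{C^k}$ is dominated by $\sum 2^{-|\a|}$ and hence converges absolutely. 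Standard Weierstrass-type arguments then yield $f\in\dcm$.

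Finally I would verify $j^\infty_0 f=\phi$ by computing derivatives at $0$ term by term, which is justified by the uniform convergence above. Since $\chi_{\la_\a}\equiv 1$ near $0$, the $\a$-summand contributes exactly $\a!\,c_\a$ to $\p^\a f(0)$; all summands with $|\beta|\neq |\a|$ contribute $0$ to $\p^\a f(0)$ (lower-order summands are annihilated after $|\a|$ derivatives, higher-order summands vanish to sufficient order at $0$). This gives $\p^\a f(0)=\a!\,c_\a$ for every $\a$, i.e.\ $j^\infty_0 f=\phi$, completing the proof. The delicate point throughout is the diagonal choice of the $\la_\a$; everything else is formal.
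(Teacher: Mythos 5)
The paper states this lemma without proof, treating it as the classical Borel theorem; your proposal supplies the standard constructive proof and is essentially correct. Two small points worth tightening: when you verify $\p^\a f(0)=\a!\,c_\a$, the claim should be that all summands with $\beta\neq\a$ (not merely $|\beta|\neq|\a|$) contribute zero — if $|\beta|=|\a|$ and $\beta\neq\a$, then $\a_i>\beta_i$ for some $i$, so $\p^\a(x^\beta)\equiv 0$, and this case should be mentioned for completeness. Also, the comparison series $\sum_\a 2^{-|\a|}$ runs over all multiindices of length $m$, so each degree $d$ contributes $\binom{d+m-1}{m-1}$ terms; the series still converges because of the exponential decay, but you should either note that polynomial factor or replace the bound with something like $3^{-|\a|}$ to make the domination transparent. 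The phrase ``diagonal selection'' is a little loose — what you actually do is, for each $\a$, take $\la_\a$ to be the maximum of the finitely many lower bounds coming from $k=0,\dots,|\a|-1$, which is simpler than any diagonal argument. With those clarifications the proof is clean and exactly what the paper is implicitly invoking.
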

   Note, implicit in this result is the fact that this determination depends only on the germ of $f$ at $x$.

\subsection{Transfer of jet preliminaries}
   To prove the main theorem we need to transfer the above jet formulation to
   the internal arena inserting the homogeneous version of Todorov's
   result into a jet level high enough so that the symbol has the
   correct form.

   If $\bsm{{}^\s LPDO_r}$ denotes those elements $P$ of $\rz
   LPDO_r$ whose coefficients are standard elements of \cm , then
   these correspond to symbols $\la_P\in{}^\s C^{\infty}(\SJ^r_{m,1},\bbr)$. Therefore, a special case of the
   *transfer of Corollary \ref{cor:removing finite zeroes} is the following statement.
\begin{corollary}
   Let $r\in\ {}^\s\bbn$, $D_a=\{x\in\bbr^m:|x|\leq a\}$  and  $\rz P\in {}^\s LPDO_r$ with $\la$   the symbol of $P$.
   Suppose that $\max\{c\in {}^\s\bbn:\rz\SZ^c({\la})\cap \rz
   D_a\;\text{is nonempty}\}$ is bounded in ${}^\s\bbn$
   independent of $a\in\bbn$. Then there exists $s\in {}^\s \bbn$ such that if
   $\la'$ is the symbol of $P^{(s)}$, then $\SZ_{\la'}\cap\rz\bbr^m_{nes}$ is empty.
\end{corollary}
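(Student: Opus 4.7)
The plan is to pass from the nonstandard hypothesis to a purely standard statement about the vanishing orders of $\la$, apply Corollary \ref{cor:removing finite zeroes} directly on the standard side, and then transfer the conclusion back to $\rz\bbr^m$.

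First I would use that $\rz P \in {}^\s LPDO_r$ has standard coefficients, so $\la$ itself is standard and each $\SZ^c(\la) \subset \bbr^m$ is a standard subset. Consequently, for standard $c$ and standard $a$, $\rz\SZ^c(\la) \cap \rz D_a$ is the transfer of $\SZ^c(\la) \cap D_a$, and the former is empty iff the latter is. Reading the hypothesis through this equivalence yields a single $c_0 \in \bbn$ with $\SZ^c(\la) \cap D_a = \emptyset$ for every $a \in \bbn$ and every standard $c > c_0$. Taking the union $\bbr^m = \bigcup_{a \in \bbn} D_a$, one obtains $\SZ^c(\la) = \emptyset$ for all standard $c > c_0$, i.e., $\sup\{c \in \bbn : \SZ^c(\la) \neq \emptyset\} \leq c_0$ is finite.

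Next I would apply Corollary \ref{cor:removing finite zeroes}: the symbol $\la' \doteq \la^{(c_0+1)}$ of the standard operator $P^{(c_0+1)}$ satisfies $\SZ(\la') = \emptyset$, so $\la'_x$ is a nonzero linear map at every $x \in \bbr^m$. Transferring this universal statement gives $\rz\la'_\xi \neq 0$ for every $\xi \in \rz\bbr^m$; a fortiori for every nearstandard $\xi$. Setting $s = c_0 + 1 \in {}^\s\bbn$ closes the proof, since $\la'$ is the symbol of $P^{(s)}$.

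The main thing to be careful about is the first step: the hypothesis is phrased as a condition on standard $c$ within each internal ball $\rz D_a$, but since $\la$ is standard this restriction is without loss and descends to a uniform bound on vanishing orders of $\la$ on all of $\bbr^m$. Once the standard bound is in hand, Corollary \ref{cor:removing finite zeroes} does essentially all the work, and transfer delivers the conclusion on $\rz\bbr^m$ (hence on its nearstandard subset) in one stroke. The $\rz D_a$ framing would genuinely matter if $\rz P$ were allowed to be truly internal rather than a transfer of a standard operator, but for $\rz P \in {}^\s LPDO_r$ it serves only as a convenient way to chop the hypothesis into transferable pieces.
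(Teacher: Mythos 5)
Your proof is correct and follows the same strategic outline as the paper's, namely to reduce to the standard Corollary \ref{cor:removing finite zeroes} and then transfer, but the execution is cleaner. The paper's proof transfers a localized version of that corollary for each disk $D_a$ separately, then stitches the conclusions together using $\bigcup_{a>0}\rz D_a = \rz\bbr^m_{nes}$ together with the uniform bound $m(a)\le m(a_0)$. You instead descend the nonstandard hypothesis to a single global standard statement ($\SZ^c(\la)=\emptyset$ for every $c>c_0$, hence $\sup\{c:\SZ^c(\la)\ne\emptyset\}$ finite), apply Corollary \ref{cor:removing finite zeroes} once, and transfer once. Your route is more economical and actually delivers the slightly stronger conclusion that $\SZ_{\la'}$ is empty on all of $\rz\bbr^m$, not merely on $\rz\bbr^m_{nes}$; the nearstandard restriction in the statement is then a weakening you observe \emph{a fortiori}. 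The only caution, which applies equally to the paper's own proof and is not a defect you introduced, is that the boundedness hypothesis does not by itself exclude the possibility that $\la$ vanishes to infinite order at some standard point $x_0$: such an $x_0$ lies in no $\SZ^c(\la)$, so the bound is vacuous there while $\la'_{x_0}$ would still vanish. That exclusion is an implicit standing assumption of the section (cf.\ the remark immediately following this corollary in the paper), and you are right to treat it as given.
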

\begin{proof}
   In *transferring Corollary  \ref{cor:removing finite zeroes}, we
   need only note the following things for this corollary to follow.
   First of all, we *transfer this corollary, for the situation
   where $\SZ(\la_P)\subset D_a$ for a given $0<a\in\bbn$ noting that
   $\cup_{a>0}\rz  D_a=\rz\bbr^m_{nes}$ and that the hypothesis
   implies that there exists $a_0\in\bbn$ such that $m(a)\doteq\max\{c\in {}^\s\bbn:\rz\SZ^c({\la})\cap \rz
   D_a\;\text{is nonempty}\}$ satisfies $m(a)\leq m(a_0)$ for all
   $a\in\bbn$.
\end{proof}
\begin{remark}
   Suppose that
   $\la_P\in C^{\infty}(\SJ^r_{m,1},\bbr)$ is such that for every bounded
   $B\subset\bbr^m$,
   $\SZ(\la_P)\cap B$ has no accumulation points. Then $\rz\la_P$ can't have  the
   property that $\la_P$ vanishes to infinite, but hyperfinite order
   at some point in $\rz\bbr^m_{nes}$. It therefore follows that
   we can't use *transfer to generalize this result, in the given context, to points where
   $ \la_P$ vanishes to infinite, hyperfinite order.

   In order to proceed we need a particular type of nonstandard partition of unity
   construction. For $0<c\in\rz\bbr^m$ and $y\in\rz\bbr^m$, let
   $D_c(y)$ denote the disk centered at $y$ with radius $c$.
\end{remark}
\begin{lemma}[*Weak partition of unity]\label{lem:POU}
   Suppose that for every $ x\in
   \bbr^m$, we have $f^x\in \dstrcmn$. Then there exists $f\in\dstrcmn$ and $0<\delta\sim 0$
   such that for each $x\in\bbr^m$, $f|_{D_\delta(x)}=f^x|_{D_\delta(x)}$.
\end{lemma}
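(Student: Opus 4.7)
The plan is to invoke polysaturation of the ambient model against a family of internal conditions whose finite intersection property comes from transferring the standard partition-of-unity construction. For each finite $F \subset \bbr^m$ and each $n \in \bbn$, I would form the internal set
\begin{align*}
   C_{F,n} = \{(f,\delta) \in \dstrcmn \times \rz\bbr_{>0} : \delta < 1/n \text{ and } f|_{D_\delta(x)} = f^x|_{D_\delta(x)} \text{ for all } x \in F\}.
\end{align*}
Internality follows from the internal definition principle because the defining condition references only the finitely many internal functions $\{f^x : x \in F\}$ and internal relations.

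To verify that $C_{F,n}$ is nonempty, I would transfer the following standard assertion: for any $k$, any pairwise distinct $x_1, \dots, x_k \in \bbr^m$, any $\delta > 0$ strictly less than half the minimum pairwise distance among the $x_i$, and any $g_1, \dots, g_k \in \dcmn$, there exists $g \in \dcmn$ with $g|_{D_\delta(x_i)} = g_i|_{D_\delta(x_i)}$ for each $i$; this is a routine bump-function glueing on the disjoint disks. By transfer the same holds for internal data, and specializing $g_i := f^{x_i}$ and choosing $\delta \in \rz\bbr_{>0}$ below both $1/n$ and half the minimum pairwise distance among points of $F$ produces an element of $C_{F,n}$.

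Next, observe that $C_{F_1 \cup \cdots \cup F_j,\, \max_i n_i} \subset \bigcap_{i=1}^j C_{F_i, n_i}$, so the family $\{C_{F,n}\}$ has the finite intersection property. The index family has cardinality $\mathfrak{c}$, well within reach of the polysaturated model adopted in the paper; polysaturation then yields $(f, \delta) \in \bigcap_{F,n} C_{F,n}$. The condition $\delta < 1/n$ for every $n \in \bbn$ forces $0 < \delta \sim 0$, while membership in $C_{\{x\},1}$ for each $x \in \bbr^m$ gives the desired agreement $f|_{D_\delta(x)} = f^x|_{D_\delta(x)}$.

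The step I expect to require the most care is formulating the standard partition-of-unity fact as a genuinely first-order sentence in the finite data $(k, x_1, \dots, x_k, \delta, g_1, \dots, g_k)$, so that transfer produces the internal version in exactly the form needed; everything else is routine bookkeeping within the saturation framework already in use in the paper.
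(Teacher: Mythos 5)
Your proof is correct but takes a genuinely different route from the paper's. The paper is constructive: it uses saturation (Theorem~\ref{thm:extend external map}) to extend the external assignment $x\mapsto f^x$ to an internal map, picks a *finite set $\SL\subset\rz\bbr^m$ containing ${}^\s\bbr^m$, chooses an infinitesimal $\delta$ below a fraction of the minimum pairwise gap among points of $\SL$, and then writes $f$ explicitly as the *finite sum $\sum_{l\in\SL}\psi_l f^l$ with transferred bump functions $\psi_l$. You instead phrase the goal as a single finite-intersection-property problem over the internal sets $C_{F,n}$ and invoke polysaturation once, verifying nonemptiness of each $C_{F,n}$ by transfer of essentially the same elementary gluing fact. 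Your argument bypasses both the map-extension step and the explicit *partition-of-unity sum, folding them into one saturation call, and so is shorter and more uniform; the paper's argument is more concrete and actually exhibits $f$, which can matter if one later needs quantitative control over the constructed solution (as the paper's own follow-up remark about numerically controlled versions suggests). Both are sound and both draw on the saturation and transfer machinery the paper has already put in place.
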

\begin{proof}
   First of all, sufficient saturation implies that the (external) map $^\s\bbr^m\ra\dstrcmn:x\mapsto f^x$ extends
   to an internal map $\SI:\rz\bbr^m\ra\dstrcm:l\in\SI\mapsto f^l$; see Theorem \ref{thm:extend external map}. Let
   $\SL\subset\rz\bbr^m$ be a *finite subset such that
   $^\s\bbr^m\subset\SL$. Choose $0<\delta\in\rz\bbr$ such that $\delta<\f{1}{10}\rz min\{|l-l'|:l,l'\in\SL,l\not=l'\}$. By the *transfer of a variation on a weak form of the
   partition of unity construction, there exists $\psi_l\in\dstrcm$
   for each $l\in\SL$ such that $\sum_{l\in\SL}\psi_l(x)=x$ for each
   $x\in\rz\bbr^m$ and for each $l\in\SL$, $\psi_l|_{D_\delta(l)}\equiv
   1$. (As the *cardinality of $\SL$ is *finite, we don't have to worry about *local finiteness of the sum of the $\psi_l$'s.)
    Then the function $f\doteq\rz\sum_{l\in\SL}\psi_lf^l$ has the
    properties we need.
\end{proof}
\begin{remark}
   In a follow up paper, a numerically controlled version of this lemma (and the corresponding one in the nonlinear section) will allow proof of most of the existence results in this paper within the category of Colombeau-Todorov algebras.
\end{remark}

   Let $\la:\SJ^r_{m,1}\ra\SJ^0_{m,1}$ denote the symbol of a $P\in
   LPDO_r$.
\begin{definition}
   Let $\bsm{finsupp(P)}$ or $\bsm{finsupp(\la)}$ denote the subset of  $\bbr^m$ given  by $\cup\{\SZ^c(\la):c=0,1,2,\ldots\}$
   For each $x\in\bbr^m$ and $k\in\bbn_0$, let $\bsm{\SJ^k_{\la,x}}$
   denote the subspace of $\SJ^k_{m,1}$ given by $\la^{(k)}(\SJ^{r+k}_{m,1,x})$.
   We write $g\not=0(\la,x)$, if $j^k_xg\not=0$ for some $k\in\bbn$. Let
   $\bsm{\SV^m_x}<\dcm$ denote the ideal of $f\in\dcm$ such that $j^{\infty}_xf=0$.
\end{definition}
\begin{lemma}\label{lem: infin diml soln space at x}
   If $x\in finsupp(\la)$, then there exists $g\in\dcm$ such that
   $j^{k_0}_xg\not=0$ for some $k_0\in\bbn$ and
   $j^k_xg\in\SJ^k_{\la,x}$ for every integer $k\geq 0$.
\end{lemma}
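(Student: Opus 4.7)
The plan is to reduce the lemma to an application of the Borel Lemma (Lemma \ref{lem: borel}) applied to a compatible sequence of jets that we build from the ground up using the nonvanishing at level $c+1$ granted by the preceding work.

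First, since $x\in finsupp(\la)$, there exists a finite $c\in\bbn_0$ with $x\in\SZ^c(\la)$. By the lemma just proved in Section \ref{section:prolong and vanishing} (the one immediately preceding Corollary \ref{cor:removing finite zeroes}), $\la^{(c+1)}_x:\SJ^{r+c+1}_{m,1,x}\ra\SJ^{c+1}_{m,1,x}$ is not the zero map. Pick $\xi^{c+1}\in\SJ^{r+c+1}_{m,1,x}$ with $\psi\doteq\la^{(c+1)}(\xi^{c+1})\neq 0$, and take $k_0=c+1$. Define $\phi^k\doteq\pi^{c+1}_k(\psi)$ for $0\leq k\leq c+1$. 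The commutative diagram (\ref{diag1}) for prolongations (with $s=1$ iterated) gives $\pi^{c+1}_k\circ\la^{(c+1)}=\la^{(k)}\circ\pi^{r+c+1}_{r+k}$, so each $\phi^k$ lies in $\SJ^k_{\la,x}$, and by construction $\phi^{c+1}=\psi\neq 0$.

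Next, I extend upward inductively. Suppose for $k\geq c+1$ we have $\phi^k\in\SJ^k_{\la,x}$ with $\phi^k=\la^{(k)}(\xi^k)$ for some $\xi^k\in\SJ^{r+k}_{m,1,x}$, compatible with the previously defined $\phi^{k-1}$ under $\pi^k_{k-1}$. Using surjectivity of the bundle projection $\pi^{r+k+1}_{r+k}:\SJ^{r+k+1}_{m,1,x}\ra\SJ^{r+k}_{m,1,x}$, choose any lift $\xi^{k+1}\in (\pi^{r+k+1}_{r+k})^{-1}(\xi^k)$ and set $\phi^{k+1}\doteq\la^{(k+1)}(\xi^{k+1})\in\SJ^{k+1}_{\la,x}$. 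Commutativity of (\ref{diag1}) gives
\begin{align}
\pi^{k+1}_k(\phi^{k+1})=\pi^{k+1}_k\la^{(k+1)}(\xi^{k+1})=\la^{(k)}\pi^{r+k+1}_{r+k}(\xi^{k+1})=\la^{(k)}(\xi^k)=\phi^k,
\end{align}
so the induction carries through and produces a compatible sequence $(\phi^0,\phi^1,\phi^2,\ldots)$, i.e., an element $\phi\in\SJ^\infty_{m,1,x}$.

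Finally, applying the Borel Lemma (Lemma \ref{lem: borel}) to this $\phi$ yields $g\in\dcm$ with $j^\infty_x g=\phi$, hence $j^k_xg=\phi^k\in\SJ^k_{\la,x}$ for every $k\geq 0$, and $j^{k_0}_x g=\psi\neq 0$, as required. The only substantive step is the inductive lifting in the second paragraph, and it is painless because the needed surjectivity of $\pi^{r+k+1}_{r+k}$ is a standard property of jet bundles and the commutative square for prolongations is exactly (\ref{diag1}); the initial nonvanishing at level $c+1$ is what the structural work of Section \ref{section:prolong and vanishing} was designed to deliver.
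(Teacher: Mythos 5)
Your proof is correct and follows the same route the paper's terse proof sketches: use the finite vanishing order at $x$ to get nonvanishing of $\la^{(c+1)}_x$, assemble a compatible tower of jets lying in the $\SJ^k_{\la,x}$ via the commutative prolongation squares, and conclude with the Borel lemma. You supply the inductive lifting details the paper leaves implicit, and you correctly lean on the local lemma immediately preceding Corollary \ref{cor:removing finite zeroes} rather than the corollary itself (which imposes an extraneous global bound), which is in fact the more appropriate ingredient for a statement about a single $x$.
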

\begin{proof}
   Given Corollary \ref{cor:removing finite zeroes} the assertion amounts to specifying that the derivatives at each level must lie in a given set
   and hence is an easy consequence of the Borel lemma.
   \end{proof}
\begin{definition}
   Let $\bsm{\frak I_{\la,x}}=\{g\in\dcm:j^k_xg\in\SJ^k_{\la,x}\;\text{for
   all}\;k\in\bbn_0\}$.
\end{definition}
   Note, of course, that $\SV^m_x<\frak I_{\la,x}$.
   So by the above lemma, $\frak I_{\la,x}$ is infinite dimensional.
   Therefore, $\rz\frak I_{*\la,*x}$ is a *infinite dimensional $\rz\bbr$ subspace of \strcm.
   In the nonstandard world, we have the following analogous definition.
\begin{definition}
   Let
\begin{align}
    \pmb{^\s\frak I_{\la,x}}= \{g\in\dstrcm: \rz j^k_{*x}g\in\rz\SJ^k_{*\la,*x}
   \;\text{for all}\;k\in\; ^\s\bbn_0\}.
\end{align}
\end{definition}
   Note that $^\s\frak I_{\la,x}$ is an external $\rz\bbr$ vector space and $\rz\frak I_{\la,x}\subset\;^\s\frak
   I_{\la,x}\subset\dstrcm$. In particular, $^\s\frak I_{\la,x}$ is infinite dimensional.
    Note that its *dimensionality is not well defined. We have one more definition.
\begin{definition}
   Let $\bsm{^\s\frak I_\la}$ denote the set of $g\in\dstrcm$ such that for
   all $\rz x\in\;^\s\bbr^m$, $g\in\;^\s\frak I_{\la,x}$.
\end{definition}

\begin{lemma}\label{lem: finite vanish implies section}
   Suppose that $^\s\frak I_{*\la,*x}\not=0$ for some $x\in\bbr^m$.
   Then $^\s\frak I_\la\not=0$.
\end{lemma}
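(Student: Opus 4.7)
The plan is to construct a nonzero element of $\;^\s\frak I_\la$ by patching local data via the *weak partition of unity of Lemma \ref{lem:POU}. The key observation underlying the patching is that, for a standard $y\in\bbr^m$, membership $f\in\;^\s\frak I_{\la,y}$ depends only on $f|_U$ for any internal open neighborhood $U$ of $\rz y$, since the *jet $\rz j^k_{\rz y}f$ is determined by the internal derivatives of $f$ at $\rz y$.

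First I would produce, for each $y\in\bbr^m$, an internal $f^y\in\dstrcm$ with $f^y\in\;^\s\frak I_{\la,y}$. At the distinguished point $x$ provided by the hypothesis, take $f^x$ to be a nonzero element of $^\s\frak I_{*\la,*x}$. At every other $y\in\bbr^m$, exploit the trivial inclusion $\SV^m_y\subset\frak I_{\la,y}$, which holds because the zero $k$-jet lies in every subspace $\SJ^k_{\la,y}$: pick any nonzero $g_y\in\SV^m_y$ (for instance a flat-at-$y$ bump of the form $t\mapsto\exp(-1/|t-y|^2)$, extended by $0$ at $y$) and set $f^y=\rz g_y$. Then all *jets of $f^y$ at $\rz y$ vanish, so $f^y\in\;^\s\frak I_{\la,y}$ trivially.

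Next, applying Lemma \ref{lem:POU} to this family yields an internal $f\in\dstrcm$ and an infinitesimal $\delta>0$ with $f|_{D_\delta(\rz y)}=f^y|_{D_\delta(\rz y)}$ for every $y\in\bbr^m$. By the key observation, $\rz j^k_{\rz y}f=\rz j^k_{\rz y}f^y\in\rz\SJ^k_{\la,\rz y}$ for every $y\in\bbr^m$ and every $k\in\;^\s\bbn_0$, so $f\in\;^\s\frak I_\la$. The main obstacle is to ensure $f\not\equiv 0$: a priori, the nonzero $f^x$ supplied by the hypothesis could happen to vanish throughout the infinitesimal disk $D_\delta(\rz x)$, in which case I would replace $f^x$ by $f^x+\rz h$ for a standard $h\in\SV^m_x$ that is not identically zero on any standard neighborhood of $x$ (for example, the flat-at-$x$ bump above). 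The modified $f^x$ still lies in $^\s\frak I_{*\la,*x}$ since both summands do, and transfer of the nonvanishing of $h$ guarantees it is nonzero somewhere on $D_\delta(\rz x)$, forcing $f$ itself to be nonzero.
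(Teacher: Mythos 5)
Your patching strategy — producing $f^y\in\;^\s\frak I_{*\la,*y}$ for each $y$, then gluing with Lemma \ref{lem:POU} and reading off $\rz j^k_{\rz y}f=\rz j^k_{\rz y}f^y$ — is exactly the one the paper uses, and that part is fine. The difference lies in what you take ``$\neq 0$'' to mean, and it matters. As written, $^\s\frak I_{*\la,*x}$ always contains nonzero elements (it contains $\rz\SV^m_x$, the internal functions whose $^\s$-jets at $\rz x$ all vanish), and likewise $\SV^m<\;^\s\frak I_\la$ always; so under the literal reading both the hypothesis and the conclusion are automatic and the lemma is vacuous. The paper's proof tacitly reads the hypothesis as supplying $f^x\in\;^\s\frak I_{*\la,*x}$ with $f^x\not=0(\la,x)$ (a nonzero $^\s$-jet at $\rz x$), and its conclusion is the correspondingly stronger statement $f\not=0(\la,x)$: the key point is that the jet $\rz j^{k_0}_{\rz x}f=\rz j^{k_0}_{\rz x}f^x\not=0$ survives the patching. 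That nonzero-jet conclusion is what the downstream Corollaries actually use, so the lemma only has content on this reading.

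Two consequences for your write-up. First, your worry that $f$ might be identically zero, together with the fix of adding $\rz h$ for flat $h\in\SV^m_x$, is solving the wrong problem: your $f^y=\rz\exp(-1/|t-y|^2)$ for $y\neq x$ is already nonzero on $D_\delta(\rz y)\setminus\{\rz y\}$, so the glued $f$ is automatically $\not\equiv 0$ and the hypothesis is never used. Second and more seriously, the fix does not restore what is missing: since $h$ is flat at $x$, the modified $f^x+\rz h$ still has all $^\s$-jets zero at $\rz x$ whenever $f^x$ does, so after patching you may well have $\rz j^\s_y(f)=0$ for every $y\in\bbr^m$, i.e.\ $f\in\SV^m$. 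Such an $f$ satisfies the literal conclusion but is useless for Corollary \ref{cor: section at pt implies infnte diml sections} and Corollary \ref{cor: infinite order solns for finite contact}. To repair the argument you should take $f^x$ with $f^x\not=0(\la,x)$ (as the paper does), observe that then $f^x$ cannot vanish identically on any internal disk around $\rz x$, and conclude $f\not=0(\la,x)$; the flat-bump choices at $y\neq x$ are fine as they stand.
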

\begin{proof}
   For each $x\in\bbr^m$, choose $f^x\in\dstrcm$ with $f^x\in\; ^\s\frak
   I_{*\la,*x}$, such that for some $x$, $f^x\not=0(\la,x)$.
   By Lemma \ref{lem:POU}, there exists $f\in\dstrcm$ and
   $0<\delta\sim0$ such that $f|_{D_\delta(x)}=f^x|_{D_\delta(x)}$ for each
   $x\in\bbr^m$. But then, for each $x\in\bbr^m$ and each
   $k\in\bbn_0$, $\rz\!j^k_{*x}f=\rz\!j^k_{*x}f^x\in\; ^\s\SJ^k_{*\la,*x}$. That
   is $f\in ^\s\frak I_\la$, and $f\not=0(\la,x)$ for some $x$.
\end{proof}
\begin{lemma}\label{lem:infty soln jet at x}
   Let $x\in\bbr^m$, and  $g\in\frak I_{\la,x}$. Then
   there exists $f\in\dcm$ such that
   $\la^{(\infty)}_x(j^{r,\infty}_xf)=j^{\infty}_xg$.
\end{lemma}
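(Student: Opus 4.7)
The plan is to realize the infinite jet $j^\infty_x g$ as the image under $\la^{(\infty)}_x$ of a compatible system of jets coming from a single smooth function $f$. Concretely, I would produce a sequence $(\eta_{r+k})_{k\geq 0}$ with $\eta_{r+k}\in\SJ^{r+k}_{m,1,x}$, satisfying the tower compatibility $\pi^{r+k+1}_{r+k}(\eta_{r+k+1})=\eta_{r+k}$ together with the equation $\la^{(k)}_x(\eta_{r+k})=j^k_xg$, extend it to a full element of $\SJ^\infty_{m,1,x}$, and apply the Borel lemma (Lemma \ref{lem: borel}) to obtain $f\in\dcm$ whose infinite jet at $x$ realizes the extended sequence. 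Once such an $f$ is in hand, the identities $\la^{(k)}_x(j^{r+k}_xf)=j^k_xg$ assemble componentwise into $\la^{(\infty)}_x(j^{r,\infty}_xf)=j^\infty_xg$.

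For each $k\geq 0$, set $\Lambda_k=(\la^{(k)}_x)^{-1}(j^k_xg)\subseteq \SJ^{r+k}_{m,1,x}$. Since $g\in\frak I_{\la,x}$, each $\Lambda_k$ is a nonempty affine subspace of a finite-dimensional vector space, and commutativity of the prolongation square \eqref{diag1} together with $\pi^{k+1}_k(j^{k+1}_xg)=j^k_xg$ gives $\pi^{r+k+1}_{r+k}(\Lambda_{k+1})\subseteq\Lambda_k$. The construction of the compatible sequence thus amounts to showing that the projective limit of the system $\{\Lambda_k,\pi^{r+k+1}_{r+k}\}$ is nonempty. I would handle this by a Mittag-Leffler-type stabilization argument for affine subspaces of finite-dimensional spaces: for each fixed $k$, the decreasing chain of nonempty affine subspaces
\[
T^{(j)}_k=\pi^{r+k+j}_{r+k}(\Lambda_{k+j})\subseteq\Lambda_k\qquad (j\geq 0)
\]
has weakly decreasing dimensions, so it stabilizes and the intersection $T^{(\infty)}_k=\bigcap_j T^{(j)}_k$ is a nonempty affine subspace. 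Now choose $\eta_r\in T^{(\infty)}_0$; inductively, given $\eta_{r+k}\in T^{(\infty)}_k$, the fiber $(\pi^{r+k+1}_{r+k})^{-1}(\eta_{r+k})\cap\Lambda_{k+1}$ is a nonempty affine subspace (because $\eta_{r+k}\in T^{(1)}_k$), and rerunning the stabilization argument inside this fiber produces an $\eta_{r+k+1}\in T^{(\infty)}_{k+1}$ projecting onto $\eta_{r+k}$.

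Finally, I would extend the compatible sequence to an element $\phi=(\phi_0,\phi_1,\ldots)\in\SJ^\infty_{m,1,x}$ by putting $\phi_i=\pi^r_i(\eta_r)$ for $i<r$ and $\phi_{r+k}=\eta_{r+k}$ otherwise, and invoke the Borel lemma to obtain $f\in\dcm$ with $j^\infty_xf=\phi$. The main obstacle is the middle step, showing the projective limit is nonempty: one needs both that every $\Lambda_k$ is itself nonempty (which is exactly the hypothesis $g\in\frak I_{\la,x}$) and that the various partial choices in the tower can be made compatible so no lift gets stranded. Finite-dimensionality of the fibers, enforcing stabilization of the descending chains $T^{(j)}_k$, is what pushes the argument through; nothing analogous is available for the full internal version on $\rz\bbr^m$, which is why the nonstandard existence results later on invoke saturation rather than this purely algebraic device.
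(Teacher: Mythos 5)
Your proof follows the same overall route as the paper's — solve $\la^{(k)}_x(\eta_{r+k}) = j^k_x g$ level by level, assemble into an element of $\SJ^{r,\infty}_{m,1,x}$, then invoke the Borel Lemma — but you supply a step the paper's argument elides. The paper merely chooses $\g_k \in \SJ^{r+k}_{m,1,x}$ with $\la^{(k)}(\g_k) = j^k_x g$ for each $k$ and then asserts ``just let $\g$ be such that $\pi^\infty_k(\g) = \g_k$ for each $k$'', which silently presumes the independently chosen $\g_k$ already form a compatible tower under the projections $\pi^{r+k+1}_{r+k}$. They need not (for instance if $\la$ is the zero symbol and $g \equiv 0$, every $\Lambda_k$ is the whole fiber, so unconstrained choices of $\g_k$ will generically be incompatible), so what is actually required is nonemptiness of the projective limit $\varprojlim \Lambda_k$. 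Your Mittag–Leffler stabilization argument — using that the $\Lambda_k$ are nonempty affine subspaces of finite-dimensional vector spaces, so descending chains of images $T_k^{(j)}$ must stabilize in finitely many steps, and then lifting inductively through the stable images — is exactly the right device to establish this, and your final closing remark correctly flags why this purely algebraic finiteness mechanism has no analogue internally over $\rz\bbr^m$, where saturation must take over. In short: same strategy, but your version closes a real gap the paper's phrasing leaves open.
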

\begin{proof}
   First of all, for every $k\in\bbn_0$, there exists
   $\g_k\in\SJ^{r+k}_{m,1}$ such that $\la^{(k)}(\g_k)=j^k_xg$. This
   just follows from the definition of $\frak I_{\la,x}$. Since this
   holds for all $k$, then there exists $\g\in\SJ^{r,\infty}_{m,1}$
   with $\la^{(\infty)}_x(\g)=j^{\infty}_xg$. Just let $\g$ be such
   that $\pi^{\infty}_k(\g)=\g_k$ for each $k$. But note that for
   $\g\in\SJ^{r,\infty}_{m,1,x}$, the Borel Lemma, Lemma \ref{lem: borel}, implies that there exists $f\in\dcm$ such that
   $j^{r,\infty}_xf=\g$.
\end{proof}
\begin{notation}
   If $f\in\dstrcm$, we will denote
     \begin{align}
        \bsm{\rz\!j^\s_x(f)}=(\rz(j^k_{*x})(f))_{k\in\bbn_0},\;\text{an external sequence}.
     \end{align}
   Similarly, if $\la$ is an internal jet map and we are considering, for each $k\in\bbn$, not $\rz\bbn$, $\la^{(k)}_{\;* x}$,  the \textbf{internal} prolongation of $\la$ at the standard point $\rz x$, ie.,$(\la^{(k)}_{*x})_{k\in^\s\bbn}$, then we will also write this as $\bsm{\la^{(\s)}_x}$; eg., if $\la$ or $f$ are standard and we are considering only this family of internal prolongations of $\rz\la$ or $\rz f$, then we will write $\bsm{\rz\!j^\s_x(\rz\!f)}$ or $\bsm{\rz\!\la^{(\s)}_x}$.
\end{notation}
   In the situation when $f\in\dcm$,  $\rz\!j^\s_x(\rz f)$ is just the external sequence
   of standard numbers, $(\rz(j^k_xf))_{k\in\bbn_0}$. This notation
   can be unwieldy; some of the parentheses, or *'s may be left out
   if the meaning is still clear.
   Note that if
\begin{align}
   \SV^m=\bigcap_{x\in\bbr^m}\SV^m_x=\{f\in\dstrcm:\rz\!j^\s_x(f)=0,\;\text{for
   all}\;x\in\bbr^m\},
\end{align}
    then $\SV^m<\;^\s\frak I_\la$. Although $\SV$ is a $\rz \bbr$ vector space, it is nonetheless external. To get a sense of the size of
    $\SV^m$ in \strcMn, note that $\SL<\SV$ where $\SL$ is the
    *finite codimensional subspace \strcMn\; defined in the
    concluding section of the paper. Therefore, we have the following consequence of Lemma \ref{lem: finite vanish implies section}.
\begin{corollary}\label{cor: section at pt implies infnte diml sections}
   Suppose that $^\s\frak I_{*\la,*x}\not=0$ for some $x\in\bbr^m$.
   Then $^\s\frak I_\la$ is *infinite dimensional.
\end{corollary}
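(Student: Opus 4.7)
The plan is to extract the conclusion from two facts already in evidence in the paragraph immediately preceding the corollary, leveraging the previous lemma only to certify that the situation is genuinely nontrivial.

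First, I would record the inclusion $\SV^m \subset {}^\s\frak I_\la$. This requires nothing more than unraveling definitions: if $g\in\dstrcm$ satisfies $\rz\!j^k_{*x}g=0$ for every $x\in\bbr^m$ and every $k\in{}^\s\bbn_0$, then in particular $\rz\!j^k_{*x}g=0\in\rz\SJ^k_{*\la,*x}$ since the zero element lies in every internal linear subspace of $\rz\SJ^k_{m,1,*x}$; hence $g\in{}^\s\frak I_{*\la,*x}$ for every $x$, so $g\in{}^\s\frak I_\la$. This inclusion holds regardless of the hypothesis.

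Second, I would invoke the *finite codimensional internal subspace $\SL<\dstrcm$ that the author promises to construct in the concluding section and that is noted to satisfy $\SL<\SV^m$. Since $\dstrcm$ is *infinite dimensional (by transfer of the fact that $\dcm$ is infinite dimensional over $\bbr$), any *finite codimensional internal subspace is *infinite dimensional; in particular $\SL$ is. Chaining the inclusions, $\SL\subset\SV^m\subset{}^\s\frak I_\la$, exhibits ${}^\s\frak I_\la$ as containing an internal subspace of *infinite dimension, which is exactly what is meant by the (external) space ${}^\s\frak I_\la$ being *infinite dimensional.

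The hypothesis of the corollary, via Lemma \ref{lem: finite vanish implies section}, is not logically required to obtain *infinite dimensionality of ${}^\s\frak I_\la$ (the preceding two steps work unconditionally), but it supplies a nonzero element of ${}^\s\frak I_\la$ that does \emph{not} lie in $\SV^m$, thereby ensuring the conclusion is not vacuous: ${}^\s\frak I_\la$ strictly exceeds its trivial part $\SV^m$. The only technical obstacle is essentially bookkeeping: one must be careful that ``*infinite dimensional'' for the external object ${}^\s\frak I_\la$ is interpreted as containing an internal *infinite dimensional subspace, since external vector spaces do not have a well defined *dimension themselves, as the author has already flagged in the discussion of $\rz\frak I_{\la,x}$ versus ${}^\s\frak I_{\la,x}$.
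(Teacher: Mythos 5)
Your proposal is correct and matches the paper's own (implicit) argument: the paper writes no separate proof for the corollary, but the paragraph immediately preceding it sets up exactly the chain $\SL < \SV^m < {}^\s\frak I_\la$ with $\SL$ an internal *finite codimensional (hence *infinite dimensional) subspace of $\dstrcm$, which is the argument you reconstruct. Your additional observation is a genuine one about the statement itself: the hypothesis ${}^\s\frak I_{*\la,*x}\not=0$, routed through Lemma \ref{lem: finite vanish implies section}, plays no logical role in establishing *infinite dimensionality (that follows unconditionally from $\SL\subset{}^\s\frak I_\la$); its only function is to guarantee ${}^\s\frak I_\la$ strictly contains the trivial ideal $\SV^m$ of functions whose $^\s$-jet vanishes everywhere on $^\s\bbr^m$, which is where the interest of the later Corollary \ref{cor: infinite order solns for finite contact} actually resides. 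You have also handled the one genuine subtlety correctly: since ${}^\s\frak I_\la$ is external and has no well-defined *dimension, the meaningful reading of ``*infinite dimensional'' is that it contains an internal *infinite dimensional subspace, which the chain through $\SL$ exhibits.
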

\begin{remark}
   Suppose that $f\in\dcm,\;\ov{f}\in\dstrcm$ such that for some standard $x$,
   and $0<\delta\sim 0$, $\ov{f}|_{D_\delta(*x)}=\rz\!f|_{D_\delta(*x)}$. Then the
   internal jet sequence
   $\rz j^{\infty}_{*x}\ov{f}\doteq(\rz j^k_{*x}\ov{f})_{k\in*\bbn}$ is just the *transfer of the standard sequence $(j^k_xf)_{k\in\bbn_0}$,
    eg.,  when the set of jet indices is  restricted to to the external set
    $^\s\bbn_0$. That is, in the above notation, $\rz\!j^\s_x(\ov{f})=
    (\rz\!j^{k}_xf)_{k\in\bbn_0}$.
\end{remark}

\subsection{Many generalized solutions with high contact}
   The following result is the main linear result of the paper, although it's import is not apparent without the following corollaries.

\begin{theorem}\label{thm:lin eqn,infinite contact soln}
   Suppose that $P\in LPDO_r$. Then, for every $g\in\; ^\s\frak
   I_\la$, there exists $f\in\dstrcm$ such that
\begin{align}
    \rz\!j^{\s}_x(\rz P(f))=\rz\!j^{\s}_xg\;\text{for
    every}\;x\in\bbr^m.
\end{align}
   That is, $\rz P(f)$ has $^\s$infinite order *contact with $g$ at all points of $\rz\bbr^m_{nes}$.
\end{theorem}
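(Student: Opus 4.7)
The plan is to prove the theorem in three steps. First, for each fixed standard $x \in \bbr^m$, build an internal $f^x \in \dstrcm$ whose image $\rz P(f^x)$ matches $g$ to every standard order of contact at $\rz x$. Second, glue the family $\{f^x\}_{x \in \bbr^m}$ into a single internal $f$ by invoking the weak partition of unity (Lemma~\ref{lem:POU}). Third, exploit the locality of $\rz P$ to transfer the jet-matching property from each $f^x$ to the global $f$ at the corresponding standard point $\rz x$.

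For the first step, fix a standard $x \in \bbr^m$. Since $g \in {}^\s\frak I_\la$, for each standard $k \in \bbn_0$ we have $\rz j^k_{\rz x}(g) \in \rz\SJ^k_{\rz\la, \rz x} = \rz\la^{(k)}(\rz\SJ^{r+k}_{m,1,\rz x})$ by transfer of the definition of $\SJ^k_{\la,x}$. Choose an internal $\g_k$ with $\rz\la^{(k)}(\g_k) = \rz j^k_{\rz x}(g)$, and use the transferred Lemma~\ref{lem: borel} to produce an internal $h_k \in \dstrcm$ with $\rz j^{r+k}_{\rz x}(h_k) = \g_k$. The transfer of the defining formula (\ref{eqn:coord free prolng formula}) then gives $\rz j^k_{\rz x}(\rz P(h_k)) = \rz\la^{(k)}(\rz j^{r+k}_{\rz x}(h_k)) = \rz j^k_{\rz x}(g)$, so the internal set
\begin{align*}
A_k = \{h \in \dstrcm : \rz j^k_{\rz x}(\rz P(h)) = \rz j^k_{\rz x}(g)\}
\end{align*}
is nonempty. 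Since $\pi^{k+1}_k \circ j^{k+1}_{\rz x} = j^k_{\rz x}$ (transferred), the family $(A_k)_{k \in \bbn_0}$ is decreasing, so by $\aleph_1$-saturation of our polysaturated model, $\bigcap_{k \in \bbn_0} A_k \ne \emptyset$; let $f^x$ be any element.

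For the second step, applying Lemma~\ref{lem:POU} to the assignment $x \mapsto f^x$ yields an internal $f \in \dstrcm$ and an infinitesimal $\d > 0$ such that $f|_{D_\d(\rz x)} = f^x|_{D_\d(\rz x)}$ for every $x \in \bbr^m$. For the third step, because $\rz P$ is a local internal differential operator of order $r$, agreement of $f$ and $f^x$ on the neighborhood $D_\d(\rz x)$ of $\rz x$ forces $\rz P(f)|_{D_\d(\rz x)} = \rz P(f^x)|_{D_\d(\rz x)}$; hence $\rz j^k_{\rz x}(\rz P(f)) = \rz j^k_{\rz x}(\rz P(f^x)) = \rz j^k_{\rz x}(g)$ for every $k$, and in particular for every standard $k \in \bbn_0$. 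This is precisely the assertion $\rz j^\s_x(\rz P(f)) = \rz j^\s_x(g)$ for every $x \in \bbr^m$.

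The main obstacle is the first step: replacing the externally indexed sequence of finite-order jet conditions by a single internal function. Each individual condition is internal and solvable (a polynomial suffices), but converting ``solvable at each standard level $k$'' into ``solvable at all standard levels simultaneously by one internal $f^x$'' cannot be done by a finite-order argument, since the external index set $\bbn_0 \subsetneq {}^\s\bbn_0$ is not itself internal. Saturation is what bridges this gap. The gluing step and locality conclusion are then routine.
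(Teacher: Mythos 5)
Your proof is correct, and it follows the same overall architecture as the paper's: a pointwise (per standard $x$) construction of a local solution $f^x$, gluing via the weak partition of unity (Lemma~\ref{lem:POU}), and a locality argument to transfer the jet matching from $f^x$ to the glued $f$.

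The place where you genuinely diverge is the first step. The paper's proof invokes the standard Lemma~\ref{lem:infty soln jet at x} and produces a \emph{standard} $f^x \in \dcm$ satisfying $\la^{(\infty)}_{P,x}(j^{\infty}_x f^x) = j^{\infty}_x g$, then transfers this identity; the gluing lemma is applied to $\rz f^x$. That reasoning is sound only when $g$ itself is standard, but the hypothesis of the theorem is $g \in {}^\s\frak I_\la \subset \dstrcm$, a genuinely nonstandard hypothesis (and Corollary~\ref{cor: infinite order Todorov} is later applied to arbitrary $g \in \dstrcm$). For nonstandard $g$ there is in general no standard $f^x$ whose jets reproduce $\rz j^\s_x g$. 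Your saturation argument repairs exactly this: each condition $\rz j^k_{\rz x}(\rz P(h)) = \rz j^k_{\rz x}(g)$ is internal (internal definition principle), nonempty by the transferred Borel/polynomial construction, and the family is nested, so countable saturation (here polysaturation) produces an \emph{internal} $f^x \in \dstrcm$ satisfying all standard-order conditions simultaneously. This is the right tool, and it is the one missing from the paper's argument. (An essentially equivalent fix would be to observe that the internal set of $k \in \rz\bbn$ with $\rz j^k_{\rz x} g \in \rz\SJ^k_{*\la,*x}$ contains $\bbn_0$, overflow into some infinite $K$, and then transfer a finite-order version of the lemma.) The remaining steps — that Lemma~\ref{lem:POU} applies to internal $f^x$, and that agreement of two internal functions on $D_\delta(\rz x)$ forces agreement of all $\rz P$-jets there because $\rz P$ is local of finite order — are routine and you carry them out correctly. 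Note in passing that the paper's surrounding remark, which justifies $\rz j^\s_x \ov f = \rz j^\s_{*x}(\rz f^x)$, is phrased for standard $f^x$ and cannot be cited verbatim in your setting; you correctly replace it with the direct locality argument.
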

\begin{proof}
   Suppose that $^\s\frak I_{\la}$. By Lemma \ref{lem:infty soln jet at x} if $x\in\bbr^m$, there is
   $f^x\in\dcm $ such that
\begin{align}\label{eqn:infinite jet soln at point}
   \la^{(\infty)}_{P,x}(j^{\infty}_xf^x)=j^{\infty}_xg.
\end{align}
    By Lemma
   \ref{lem:POU} there exist $\ov{f}\in\dstrcm$ such that for every
   $x\in\bbr^m$ $\ov{f}|_{D_\delta(*x)}=\rz f^x|_{D_\delta(*x)}$. By the remark
   above, for each such standard $x$,
   $\rz\!j^{\s}_x\ov{f}=\rz\!j^{\s}_{*x}(\rz\!f^x)$. But this
   implies that, at each standard $x$,
\begin{align}
   \rz\!\la^{(\s)}_{*x}(\rz\!j^{\s}_{*x}\ov{f})=\rz\!\la^{(\s)}_{*x}(\rz\!j^{\s}_{*x}\rz\!f^x)
\end{align}
   Coupling this with the transfer of expression (\ref{eqn:infinite jet soln at
   point}) restricted to standard indices, we now have $\ov{f}\in\dstrcm$ such that
\begin{align}\label{eqn:infinite prolong,global soln in symb}
   \rz\la^{(\s)}_{*x}(\rz j^{\s}_{*x}\ov{f})=\rz j^{\s}_xg
\end{align}
   for each $x\in\bbr^m$. But, by definition of prolongation,
   *transferred
\begin{align}\label{eqn:infin prolong P is infin prolong lam}
   \rz j^{\infty}_{*x}(\rz P(\ov{f}))=\rz\la^{(\infty)}_{*x}(\rz j^{r,\infty}_{*x}\ov{f}).
\end{align}
   Stringing together expressions (\ref{eqn:infinite prolong,global soln in
   symb}) and (\ref{eqn:infin prolong P is infin prolong lam}), restricted to standard indices, gets
   our result, as this holds for every standard $x$.
\end{proof}
\begin{corollary}\label{cor: infinite order Todorov}
   Suppose that $P\in LPDO_r$ with symbol $\la$, and principal symbol $\un{\la}$. Suppose that for each $x\in\bbr^m,\un{\la}_x\not=0$. Then for every
   $g\in\dstrcm$, there exists $f\in\dstrcm$ with
\begin{align}
    *j^{\infty}_{*x}(*P(f))=*j^{\infty}_{*x}g\;\text{for
    every}\;x\in\bbr^m.
\end{align}
\end{corollary}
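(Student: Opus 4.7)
The plan is to reduce the corollary directly to Theorem \ref{thm:lin eqn,infinite contact soln} by showing that the hypothesis $\un{\la}_x\not=0$ for every $x\in\bbr^m$ forces $^\s\frak I_\la=\dstrcm$, so that the main theorem applies to arbitrary internal $g$ without any restriction.

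First, I would invoke the (standard) Corollary \ref{lem:symb prolong is surj}: at each point $x\in\bbr^m$ the nonvanishing of $\un{\la}_x$ gives surjectivity of $\la^{(k)}_x:\SJ^{r+k}_{m,1,x}\to\SJ^k_{m,1,x}$ for every $k\in\bbn_0$. In the notation of the previous section this reads $\SJ^k_{\la,x}=\SJ^k_{m,1,x}$, so $\frak I_{\la,x}=\dcm$ for every $x\in\bbr^m$, and therefore $\frak I_\la=\bigcap_x\frak I_{\la,x}=\dcm$.

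Second, I would transfer this. For each fixed standard $x$ the statement ``for all $g\in\dcm$ and all $k\in\bbn_0$, $j^k_xg\in\SJ^k_{\la,x}$'' is internal once $x$ is fixed, and its transfer yields ``for all $g\in\dstrcm$ and all $k\in\rz\bbn_0$, $\rz j^k_{*x}g\in\rz\SJ^k_{*\la,*x}$''. Restricting the index set to the external $^\s\bbn_0$, this is exactly the defining condition for $g\in\;^\s\frak I_{*\la,*x}$. Since this holds at every standard point, any internal $g\in\dstrcm$ lies in $^\s\frak I_\la$. Thus the hypothesis set of Theorem \ref{thm:lin eqn,infinite contact soln} becomes all of $\dstrcm$.

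Finally, given an arbitrary $g\in\dstrcm$, apply Theorem \ref{thm:lin eqn,infinite contact soln} to produce $f\in\dstrcm$ with $\rz\!j^{\s}_x(\rz P(f))=\rz\!j^{\s}_xg$ for every $x\in\bbr^m$, which is precisely the desired $*j^\infty_{*x}(*P(f))=*j^\infty_{*x}g$ read along standard indices at standard points. There is essentially no obstacle here beyond bookkeeping; the only point that requires any care is the transfer step, because one must be sure that the universal quantifier on $g$ is legitimately transferred (which it is, since $x$ is held fixed standard and the quantifier is over the standard set $\dcm$, yielding the internal set $\dstrcm$ after transfer) and that the external restriction from $\rz\bbn_0$ down to $^\s\bbn_0$ is used only in reading off membership in $^\s\frak I_{*\la,*x}$, not inside any transferred statement.
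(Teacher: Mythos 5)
Your proof is correct and follows the same route as the paper: invoke Corollary~\ref{lem:symb prolong is surj} to show that the nonvanishing principal symbol puts every $g$ in $^\s\frak I_\la$, then apply Theorem~\ref{thm:lin eqn,infinite contact soln}. In fact you are somewhat more careful than the paper's terse proof, which only verifies $g\in{}^\s\frak I_\la$ for \emph{standard} $g\in\dcm$; your explicit transfer step (fix the standard $x$, transfer the surjectivity of $\la^{(k)}_x$, then read membership along $^\s\bbn_0$) is exactly the bookkeeping needed to cover the arbitrary internal $g\in\dstrcm$ that the corollary actually asserts.
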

\begin{proof}
    By Lemma \ref{lem:symb prolong is surj}, if $g\in\dcm$, then $g\in\; ^\s\frak
    I_\la$. But then the result is a direct consequence the previous
    theorem.
\end{proof}
\begin{remark}
   To put this result in perspective, note that Todorov, \cite{Todorov96},
    proves the $0^{th}$ order jet case in his paper, with a slightly
    weaker hypothesis.
\end{remark}
    For those $x\in\bbr^m$ where $\la_x=0$, a trivial case for the
    $0$-jet, as Todorov notes, becomes a nontrivial thickened result
    when the consideration becomes the infinite jet at standard points
    where some finite prolongation $\la^{(k)}_x$ is nonzero. For
    this situation we have the following result.
\begin{corollary}\label{cor: infinite order solns for finite contact}
    Suppose that $finsupp(P)=\bbr^m$. Then there exists an *infinite dimensional subspace $^\s\frak I_P<\dstrcm$
    such that if $g\in\; ^\s\frak I_P$,
     then there exists $f\in\dstrcm$ such that
\begin{align}
    \rz j^{\s}_{*x}(\rz P(f))=\rz j^{\s}_{*x}g\;\text{for
    every}\;x\in\bbr^m.
\end{align}
\end{corollary}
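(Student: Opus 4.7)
My plan is to take $^\s\frak I_P \doteq {}^\s\frak I_\la$ where $\la = \la_P$ is the total symbol of $P$, and then show the two needed properties: (i) that $^\s\frak I_\la$ is *infinite dimensional under the hypothesis $finsupp(P) = \bbr^m$, and (ii) that for every $g$ in this subspace, the desired $f \in \dstrcm$ exists. Property (ii) is essentially already done: Theorem \ref{thm:lin eqn,infinite contact soln} says precisely that for every $g \in {}^\s\frak I_\la$ there is $f \in \dstrcm$ with $\rz j^{\s}_{*x}(\rz P(f)) = \rz j^{\s}_{*x} g$ at every standard $x$, which is the conclusion we want. So the real content is producing enough members of $^\s\frak I_\la$.

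For (i), the strategy is to invoke Corollary \ref{cor: section at pt implies infnte diml sections}, which tells us that if $^\s\frak I_{*\la,*x} \ne 0$ at even a single $x \in \bbr^m$, then $^\s\frak I_\la$ is in fact *infinite dimensional. To verify the single-point nonvanishing hypothesis, I would pick any $x_0 \in \bbr^m$ and use the assumption $finsupp(P) = \bbr^m$: by definition this says $x_0 \in \SZ^{c_0}(\la)$ for some finite $c_0$. Then Corollary \ref{cor:removing finite zeroes} (applied locally, or its pointwise content proved in the preceding lemma) shows $\la^{(c_0+1)}_{x_0}$ has nonzero rank, so $\SJ^k_{\la,x_0} = \la^{(k)}(\SJ^{r+k}_{m,1,x_0})$ is a nonzero subspace for each $k \geq c_0+1$. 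Consequently, by Lemma \ref{lem: infin diml soln space at x}, there is $g \in \dcm$ with $j^{k_0}_{x_0} g \ne 0$ for some $k_0$ and $j^k_{x_0} g \in \SJ^k_{\la,x_0}$ for all $k$, i.e. $g \in \frak I_{\la,x_0}$ and $g \not= 0$. Its *transfer $\rz g \in \dstrcm$ then lies in $^\s\frak I_{*\la,*x_0}$ and is nonzero as a jet, so this external vector space is nontrivial.

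With the hypothesis of Corollary \ref{cor: section at pt implies infnte diml sections} verified at $x_0$, that corollary (whose proof runs through Lemma \ref{lem: finite vanish implies section} via the *weak partition of unity Lemma \ref{lem:POU} to glue pointwise choices $f^x \in {}^\s\frak I_{*\la,*x}$ into a single internal $f$ agreeing with each $f^x$ on the infinitesimal disk $D_\delta(*x)$) furnishes the *infinite-dimensional ${}^\s\frak I_\la$. Setting $^\s\frak I_P \doteq {}^\s\frak I_\la$, Theorem \ref{thm:lin eqn,infinite contact soln} then produces for each $g \in {}^\s\frak I_P$ the required internal smooth $f$ with $\rz j^{\s}_{*x}(\rz P(f)) = \rz j^{\s}_{*x} g$ for all $x \in \bbr^m$.

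The only subtlety I anticipate is the transition between pointwise statements about $\frak I_{\la,x}$ (which exist coordinate-by-coordinate over $\bbr^m$) and the single internal object in ${}^\s\frak I_\la$. That gluing is already handled by the *partition-of-unity lemma, so the argument is essentially bookkeeping: reduce the finite-vanishing case to the main theorem by establishing that the hypothesis $finsupp(P) = \bbr^m$ forces every ${}^\s\frak I_{*\la,*x}$ to be nontrivial (using the Borel lemma through Lemma \ref{lem: infin diml soln space at x}), then cite the machinery already developed.
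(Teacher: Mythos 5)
Your proof follows essentially the same route as the paper's: you invoke Lemma \ref{lem: infin diml soln space at x} (via the hypothesis $finsupp(P)=\bbr^m$) to get a nontrivial element of $\frak I_{\la,x_0}$, transfer it to show $^\s\frak I_{*\la,*x_0}\ne 0$, feed that into Corollary \ref{cor: section at pt implies infnte diml sections} for *infinite dimensionality of $^\s\frak I_\la$, and then apply Theorem \ref{thm:lin eqn,infinite contact soln}. This matches the paper's argument; you simply make explicit the step from $\mathcal S^P_x\ne 0$ to $^\s\frak I_{*\la,*x}\ne 0$ that the paper leaves tacit.
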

\begin{proof}
   As $finsupp(P)=\bbr^m$, if $x\in\bbr^m$, Lemma \ref{lem: infin diml soln space at
   x} implies that $\mathcal S^P_x\doteq\{j^{\infty}_xg:g\in \frak I_{\la,x}\}$ is nonzero. Therefore, the result follows from Corollary \ref{cor: section at pt implies infnte diml sections} and the above
   theorem.
\end{proof}
     That is, even if the symbol vanishes at points of $\bbr^m$, as long as this vanishing order is finite at each such point, then there exists many $g\in\dstrcm$, satisfying the above compatibility conditions, such that $\rz P(f)=g$ is solved to infinite order along $^\s\bbr^m$ by $f\in\dstrcm$.
\subsection{Solutions for singular Lewy operator}
   Before we move to the next section, let's look at the Lewy operator, see \cite{Todorov96}, p.679,  $\SL=\p_1+i\p_2-2i(x_1+ix_2)\p_3$ acting on smooth complex valued functions on $\bbr^3$. First of all, note that the results just proved hold just as well with complex valued functions; the proofs are identical. Second, note that the principal symbol, $\un{\la}_{\;\SL}$ of $\SL$ is the same as the total symbol $\la_\SL=y_1+iy_2-2i(x_1+ix_2)y_3$. Inspection shows that these maps are nonvanishing, hence $\SL$ satisfies the hypotheses in Corollary \ref{cor: infinite order Todorov}, ie., for any $g\in \rz C^\infty(\bbr^3,\bbc)$, there exists (many) $f\in\rz C^\infty (\bbr^3,\bbc)$ such that $\rz\SL(f)(\rz x)=\rz g(\rz x)$ to infinite order at each $x\in\bbr^3$.
   But we can say more. Suppose that $h=(h_1,h_2,h_3)$ is such that $h_i\in C^\infty(\bbr^3,\bbr)$ for each $i$ and $h$ vanishes to finite order at each $x\in\bbr^3$. {\it Let $\wh{\SL}=h_1(x)\p_1+ih_2(x)\p_2-2ih_3(x)(x_1+x_2)\p_3$, a kind of singular Lewy operator with finite singularities at each $x\in\bbr^3$.
   Then Corollary \ref{cor: infinite order solns for finite contact} implies that for any $g\in C^\infty(\bbr^3,\bbc)$ that vanishes where $\la_{\wh{\SL}}$ vanishes to order at least that of $h$, there exists $f\in \rz C^\infty(\bbr^3,\bbc)$ such that $\wh{\SL}(f)(\rz x)=g(\rz x)$ holds to infinite order at all $x\in\bbr^3$}.

\section{Nonlinear PDE's and the pointwise lifting property}\label{section: nonlinear work}
   In this section $P$ can now be an arbitrary smooth nonlinear PDO of finite order.
   Only the rudiments of a nonlinear development parallel to the linear considerations
   in the previous sections will be attempted in this paper. The point here is
   that the
   framework is not an impediment to a consistent consideration of generalized objects.

   First, as it is natural within our framework, we straightforwardly extend the notion of solution of a differential equation, as defined in Todorov's paper, to include nonlinear as well as linear differential equations.
   In analogy with $LPDO_r$, a (possibly nonlinear) order $r$ partial differential operator,  $P:\dcm\ra \dcm$, is a mapping given by $P(f)(x)=\la(j^r_xf)$ where now the total symbol of $P$, $\la:\SJ^r_{m,1}\ra\bbr_m$ is a possibly nonlinear smooth bundle map. Let $\bsm{NLDO_r}$ denote this set of operators.
\begin{definition}
    Given $g\in\dstrcm$, we say that $f\in\dstrcm$ is a solution of $\rz P(f)=g$ if $\rz P(f)(\rz x)=g(\rz x)$ for every $x\in\bbr^m$.
\end{definition}
   We will consider a simple set theoretic condition on pairs $(P,g)$ (or $(\la_P,g)$) the \textbf{pointwise covering property}, $\bsm{PCP}$.
   An easy (saturation) proof will get that if $(P,g)$ satisfies this
   property, written $(P,g)\in PCP$, or $(\la_P,g)\in PCP$, then $P(f)=g$ has generalized solutions in a sense of
   Todorov. We will then show that the main theorem is a corollary
   of this result by verifying that our linear differential equation
   satisfies $PCP$.
\begin{definition}
   Let $\la\in C^{\infty}(\SJ^k_{m,1},\bbr)$ and $g\in\dcm$. We
   say that \textbf{the pair $\bsm{(\la,g)}$ satisfies $\bsm{PCP}$}, if for each
   $x\in\bbr^m$, there exists $p\in (\pi^k)^{-1}(x)$, such that
   $\la(p)=g(x)$. If $\la\in \rz C^{\infty}(\SJ^k_{m,1},\bbr)$ and
   $g\in\dstrcm$, then we say that \textbf{the pair $\bsm{(\rz\la,g)}$ satisfies $\bsm{{}^\s PCP}$}
   if for all $x\in {}^\s\bbr^m$, there exists $p\in\rz\pi^{-1}(x)$,
   such that $\la(p)=g(\rz x)$.
\end{definition}

\begin{remark}
   Note that finding $p\in(\pi^r)^{-1}(x)$ such that $\la(p)=g(x)$
   is identical to finding $h\in\dcm$ such that $h$ solves  $P(h)=g$
   at the single point $x$. Also, note the relationship between $PCP$ and ${}^\s PCP$.
   If $\la\in C^{\infty}(\SJ^k_{m,1},\bbr)$ and $g\in\dcm$ are
   such that $(\la,g)\in PCP$, then $(\rz\la,\rz g)\in {}^\s PCP$. On the other hand, if
   $\la\in SC^{\infty}(\SJ^k_{m,1},\bbr)$ and $g\in\dscm$ are
   such that $(\la,g)\in {}^\s PCP$, then $(^o\la,^og)\in PCP$. (Recall that if $X,Y$ are Hausdorff topological spaces and $f:\rz X\ra\rz Y$ is such that $f$ maps nearstandard points of $\rz X$ to those of $\rz Y$, then the standard part of $f$, $^of:X\ra Y$ is a welldefined map.)
\end{remark}

   The following lemma verifies that the the PCP condition restricted to linear differential operators has Todorov's criterion as a special case. We are working with the symbol of the operator.
\begin{lemma}
   Let $P\in LPDO_r$ and write
$$
   \la_P=\sum_{|\a|\leq r}f_{\a}y_{\a}.
$$
   Suppose that $\sum_{|\a|\leq r}|f^{\a}(x)|\not=0$ for all
   $x\in\bbr^m$
   Then for all $g\in\dstrcm$, $(\rz\la_P,g)\in {}^\s PCP$.
\end{lemma}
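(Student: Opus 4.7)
My plan is to unpack the $PCP$ condition and observe that the hypothesis is exactly what is needed to solve the relevant one-point equation by a trivial linear-algebra step in the fiber. Fix a standard point $x \in \bbr^m$; I need to produce an internal $p \in \rz\SJ^r_{m,1,\rz x}$ with $\rz\la_P(p) = g(\rz x)$. The fiber is coordinatized by the internal coordinates $y_\a$, $|\a|\leq r$, and on this fiber $\rz\la_P$ acts as the internal linear functional $p \mapsto \sum_{|\a|\leq r}\rz f_\a(\rz x)\,y_\a(p)$.

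By hypothesis $\sum_{|\a|\leq r}|f_\a(x)| \neq 0$, so there exists some multi-index $\a_0$ (depending on $x$) with $f_{\a_0}(x)\neq 0$. Transfer then gives $\rz f_{\a_0}(\rz x) = \rz(f_{\a_0}(x))$, which is a nonzero standard real number and hence is invertible in $\rz\bbr$. I would then define $p \in \rz\SJ^r_{m,1,\rz x}$ by prescribing its internal jet coordinates:
\begin{align}
   y_{\a_0}(p) = \frac{g(\rz x)}{\rz f_{\a_0}(\rz x)}, \qquad y_\a(p)=0 \;\text{for}\; \a\neq\a_0.
\end{align}
These are legitimate internal values (even though $g(\rz x)$ may be nonstandard, the quotient lives in $\rz\bbr$), so $p$ is an internal element of the fiber. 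Plugging into the symbol,
\begin{align}
   \rz\la_P(p) = \sum_{|\a|\leq r}\rz f_\a(\rz x)\,y_\a(p) = \rz f_{\a_0}(\rz x)\cdot\frac{g(\rz x)}{\rz f_{\a_0}(\rz x)} = g(\rz x),
\end{align}
which is exactly the $\rz PCP$ requirement at $\rz x$.

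Since $x\in\bbr^m$ was an arbitrary standard point, this produces such a $p$ at every point of ${}^\s\bbr^m$, establishing $(\rz\la_P,g)\in{}^\s PCP$. There is no real obstacle here: the argument is pointwise, makes no attempt to assemble these $p$'s into a coherent internal section (that is the job of the subsequent saturation/partition-of-unity step using Lemma \ref{lem:POU}), and uses only that a linear functional with a nonzero coefficient on a standard basis vector is surjective onto $\rz\bbr$. The only subtle point worth flagging explicitly in the write-up is that the choice of $\a_0$ may depend on $x$, so the association $x \mapsto \a_0(x)$ is external; but since we are only asserting existence of $p$ pointwise, this causes no issue.
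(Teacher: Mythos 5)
Your proof is correct and captures the same essential idea as the paper's: at each standard $x$ pick a multiindex $\a_0$ with $f_{\a_0}(x)\neq 0$ and use the corresponding coordinate to solve the single linear equation in the fiber. The only difference is expository. The paper's proof takes an indirect route: it first produces a function $h\in\dstrcm$ with $\rz\p^{\a_0}h(\rz x)=g(\rz x)/\rz f_{\a_0}(\rz x)$ (and, in a separate case, $\rz\p^\a h(\rz x)=0$ for the other $\a$ where the coefficient is nonzero), and then sets $\k=\rz j^r_{\rz x}h$. You simply name the fiber point $p$ directly by its $y_\a$-coordinates, which is what $\k$ is anyway; this avoids both the indirection through a function $h$ and the paper's (inessential) case split on whether the index set $\G=\{\a:f_\a(x)\neq 0\}$ is a singleton. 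Your closing remark that the map $x\mapsto\a_0(x)$ is external but harmless, since the claim is only pointwise existence, is a useful clarification that the paper leaves implicit.
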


\begin{proof}
    Let $x_0\in\bbr^m$. We will not write $\rz x_0$ when we transfer. The condition guarantees that there exists a
    multiindex $\a$ such that $f^{\a}(x_0)\not=0$. Let $\G=\{\a:c_{\a}\doteq f^{\a}(x_0)\not=0\}$.
    If $\G$ has only one element, $\a_0$, let $h\in\dstrcm$ be such
    that
\begin{align}
    \rz\p^{\a_0}(h)(x_0)=\f{g(x_0)}{\rz f^{\a_0}(x_0)}
\end{align}
    Then if $\kappa\in\rz\SJ^k_{m,1}$ is given by $\rz j^r_{x_0}h$, we get that

\begin{align}\label{eqn:PCP}
    \rz\la_P(\kappa)=\rz\la_P(\rz j^r_{x_0}(h))= \\ \rz \sum_{|\a|\leq   \notag
    r}f^{\a}(x_0)y_{\a}(j^r_{x_0}h)= &
    f^{\a_0}(x_0)y_{\a_0}(j^r_{x_0}h)\notag = \\
    f^{\a_0}(x_0)\f{g(x_0)}{f^{\a_0}(x_0)}= g(x_0)\notag
\end{align}
    as we wanted. So suppose that $\G$ has at least two elements.
    Let $\a_0\in\G$ and let $\La=\G-\{\a_0\}$. Choose $h\in\dstrcm$ so
    that if $\a\in\La$, then $\rz\p^{\a}h(x_0)=0$ and (as in the first
    case)
    such that $\rz\p^{\a_0}(h)(x_0)=\f{g(x_0)}{f^{\a_0}(x_0)}$. Then as
    in expressions (\ref{eqn:PCP}), we get $P(h)(x_0)=g(x_0)$.
\end{proof}

    Given the above lemma we shall see that Todorov's result is a  corollary of this lemma and the next theorem proving the existence of solutions of PCP operators.
    Before we proceed to the theorem, we need some NSA preliminaries.
    First we give a simple example
    of the construction we will need.  Let $F(\bbr)$ be all maps from $\bbr$ to
    $\bbr$ and let $F^{\infty}(\bbr)=\{f\in F(\bbr):f\;\text{is smooth}\}$. Let $f\in F(\bbr)$, then
    there exists an (internal) element $\tl{f}\in\rz F^{\infty}(\bbr)$ such that
    $\tl{f}|_{{}^\s\bbr}=f$, as the following argument shows. Let
     $\SY_1\subset \rz\bbr$ be  *finite such that ${}^\s\bbr\subset\SY_1$ and let
     $\SY_2=\rz f(\SY_1)$. Then $\SY_2$ is obviously a *finite subset of $\rz\bbr$.
     Now consider the following elementary standard statement. If $S_1,S_2$ are finite subsets of $\bbr$, of the same cardinality,
     and $h:S_1\ra S_2$ is a bijection, there exists $\tl{h}\in F^{\infty}(\bbr)$ such that $\tl{h}|_{S_1}=h$. This follows from a simple partition of unity argument.
     Now *transfer this to get existence of $\tl{f}\in\rz F^{\infty}(\bbr)$ such that
     $\tl{f}|_{\SY_1}=\rz f|_{\SY_1}$. In particular, $\tl{f}|_{{}^\s\bbr}=\rz
     f|_{{}^\s\bbr}=f|_{\bbr}$, as we wanted. Now we want to do the
     same construction in the venue of bundles and their sections.
     Let $\bsm{\G(\SJ^r_{ m,1})}=\{s:\bbr^m\ra\SJ^r_{m,1}|\;\pi^r\circ
    s=\bbi_{\bbr^m}\}$, ie., set theoretic sections of $\pi^r$. Let
    $\bsm{\G^{\infty}(\SJ^r_{m,1})}=\{s\in\G(\SJ^r_{ m,1}):s\;\text{is a
    smooth map}\}$. We have the following lemma.
\begin{lemma}\label{lem:first standard jet approx}
    Suppose that $s\in\G(\SJ^r_{m,1})$. Then there exists
    $\tl{s}\in\rz\G^{\infty}(\SJ^r_{m,1})$, such that
    $\tl{s}|_{{}^\s\bbr^m}=s|_{\bbr^m}$.
\end{lemma}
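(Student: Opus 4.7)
The plan is to imitate the scalar construction spelled out in the paragraph preceding the lemma, replacing $F(\bbr)$ by $\G(\SJ^r_{m,1})$ and $F^\infty(\bbr)$ by $\G^\infty(\SJ^r_{m,1})$. There are two ingredients: (i) realize the external set-theoretic section $s$ as the restriction to $^\s\bbr^m$ of an internal set-theoretic section defined on a *finite internal set; (ii) use the transfer of a standard smooth-interpolation fact to replace this *finite-support internal section by an internal smooth section.

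For (i), observe first that the assignment $\rz x\mapsto\rz(s(x))$ defines an external map $^\s\bbr^m\to\rz\SJ^r_{m,1}$ that covers the identity on $^\s\bbr^m$ in the sense that $\rz\pi^r(\rz(s(x)))=\rz x$. The cardinality of $^\s\bbr^m$ is no larger than that of $\rz\bbr^n$, so the direct analogue of Theorem \ref{thm:extend external map} (with $\dstrcmn$ replaced by the internal set $\rz\G(\SJ^r_{m,1})$, i.e.\ internal set-theoretic sections) gives an internal subset $\bar A\subset\rz\bbr^m$ with $^\s\bbr^m\subset\bar A$ and an internal map $\bar s:\bar A\to\rz\SJ^r_{m,1}$ with $\rz\pi^r\circ\bar s=\mathrm{id}_{\bar A}$ extending the external assignment. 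By polysaturation we then pick a *finite internal $\SY\subset\bar A$ with $^\s\bbr^m\subset\SY$, and set $\sigma=\bar s|_\SY$, so that $\sigma$ is an internal set-theoretic section defined on the *finite set $\SY$.

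For (ii), the standard statement to transfer is the following: for any finite set $S\subset\bbr^m$ and any set-theoretic section $\tau:S\to\SJ^r_{m,1}$ (so $\pi^r\circ\tau=\mathrm{id}_S$), there exists $\tilde\tau\in\G^\infty(\SJ^r_{m,1})$ with $\tilde\tau|_S=\tau$. This is routine: for each $x_i\in S$ use the canonical identification of $\SJ^r_{m,1,x_i}$ with Taylor polynomials to pick $g_i\in\dcm$ with $j^r_{x_i}g_i=\tau(x_i)$, pick disjoint open balls $B_i$ about the $x_i$, take a smooth partition of unity $\{\phi_i,\phi_\infty\}$ subordinate to $\{B_i\}\cup\{\bbr^m\setminus S\}$ with $\phi_i\equiv 1$ near $x_i$, and let $\tilde\tau=j^r(\sum_i\phi_i g_i)$; the condition $\phi_i\equiv 1$ near $x_i$ makes $j^r_{x_i}(\sum_j\phi_j g_j)=j^r_{x_i}g_i=\tau(x_i)$. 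Transferring this statement and applying it with $S$ replaced by the *finite $\SY$ and $\tau$ replaced by $\sigma$ yields an internal $\tl s\in\rz\G^\infty(\SJ^r_{m,1})$ with $\tl s|_\SY=\sigma$. In particular $\tl s|_{^\s\bbr^m}=s|_{\bbr^m}$ under the identification $\rz x\leftrightarrow x$, which is exactly the conclusion.

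The only delicate step is (i): one must be sure that a version of Theorem \ref{thm:extend external map} applies to internal sections of $\rz\SJ^r_{m,1}$, not just to $\dstrcmn$. Since $\G(\SJ^r_{m,1})$ is a set of maps of the same model-theoretic status as $\dcmn$ (both are parametrized subfamilies of $F(\bbr^m,\mathrm{target})$ cut out by a first-order condition, here $\pi^r\circ s=\mathrm{id}$), the proof of Theorem \ref{thm:extend external map} applies verbatim; this is where I would expect a reader to pause, and where I would insert a sentence explicitly noting the internality of the set of set-theoretic sections and the applicability of polysaturated extension. The remainder is a straightforward transfer of a partition of unity argument.
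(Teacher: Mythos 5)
Your argument is correct and reaches the conclusion, but it takes an unnecessary detour in step (i). Since $s\in\G(\SJ^r_{m,1})$ is a \emph{standard} set-theoretic section, its transfer $\rz s$ is already an internal map $\rz\bbr^m\to\rz\SJ^r_{m,1}$ with $\rz\pi^r\circ\rz s=\mathrm{id}$. The external assignment $\rz x\mapsto\rz(s(x))$ that you feed into the analogue of Theorem~\ref{thm:extend external map} is nothing but $\rz s|_{^\s\bbr^m}$, so the internal extension you are constructing is already sitting there as $\rz s$; no appeal to the external-map extension theorem is needed. The paper's proof does exactly this: choose a *finite $\SX\supset{}^\s\bbr^m$ by saturation, form the *finite set $\rz s(\SX)$, and apply the transferred finite-interpolation lemma to $\SX$ and $\rz s(\SX)$. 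Your step (ii) then matches the paper's verbatim. The side effect of your detour is a little extra generality — your version would still go through for an external section defined only on $^\s\bbr^m$ (modulo the cardinality hypothesis) — but for the lemma as stated $s$ is standard and this generality is unused. In particular, the ``delicate step'' you flag at the end, namely whether the extension theorem applies to internal sections of $\rz\SJ^r_{m,1}$, evaporates entirely once you notice that $\rz s$ is internal from the start.
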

\begin{proof}
    As with the above example, let $\SX\subset\rz\bbr^m$ be *finite
    such that $\bbr^m\subset\SX$. We have the following elementary
    fact. If $B=\{b_1,\ldots,b_l\}$ is a finite subset of the
    base and $P=\{p_1,\ldots,p_l\}\subset\SJ^r_{m,1}$ is a finite
    subset such that $p_j\in(\pi^r)^{-1}(b_j)$ for each $j$, then
    there exists $s\in\G(\SJ^r_{m,1})$ such that $s(x_j)=p_j$ for
    all $j$. Now *transnfer this statement, applying the *transferred
    statement to the *finite subset $\SX$ in the base and the
    *finite subset $\rz s(\SX)$ of points in the *bundle over $\SX$.
    That is, we can infer the existence of an internal section
    $\tl{s}\in\rz\G^{\infty}(\SJ^r_{m,1})$ such that for all $x\in\SX$,
    $\tl{s}(x)=\rz s(x)$, in particular $\tl{s}|_{{}^\s\bbr}=s$, as
    we wanted.
\end{proof}
    In the context of this lemma, we have that
    $(\rz\la,g)\in{}^\s PCP$ is equivalent to the existence of a set
    theoretic section $s\in\rz\G(\SJ^r_{m,1})$ such that the pointwise
    condition $\rz\la\circ \rz s=g$ holds on ${}^\s\bbr^m$. It's
    important to note that, generally speaking, such sections are
    far from integrable; that is, equal to $j^rf$ for some smooth
    $f\in\dcm$. But again, by a transfer argument, we can find such a
    section.

\begin{lemma}\label{lem:second standard jet approx}
    Suppose that $s\in\G^{\infty}(\SJ^r_{m,1})$ and $\SX\subset\rz\bbr^m$.
    Then there exists $f\in\dstrcm$ such that $\rz
    j^rf|_{\SX}=s|_{\SX}$.
\end{lemma}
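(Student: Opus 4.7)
The plan is to reduce the claim to a standard jet-interpolation fact at finitely many points and then invoke transfer together with the polysaturation already exploited earlier in the paper.

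First I would establish the following standard statement: given distinct points $x_1,\ldots,x_N\in\bbr^m$ and jets $\phi_i\in\SJ^r_{m,1,x_i}$, there exists $f\in\dcm$ with $j^r_{x_i}f=\phi_i$ for every $i$. The construction is the standard partition-of-unity trick: choose pairwise disjoint open neighborhoods $U_i\ni x_i$, pick bump functions $\rho_i\in\dcm$ supported in $U_i$ with $\rho_i\equiv 1$ on a smaller neighborhood of $x_i$, represent each $\phi_i$ by its Taylor polynomial $p_i\in P_r(m,1)$ centered at $x_i$, and set $f=\sum_i\rho_i\cdot p_i$. Because $\rho_i\equiv 1$ near $x_i$ while the other summands vanish there, one reads off $j^r_{x_i}f=j^r_{x_i}(\rho_ip_i)=j^r_{x_i}p_i=\phi_i$.

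The $\rz$-transfer of this statement then reads: for any $\rz$-finite $\SY\subset\rz\bbr^m$ and any internal set-theoretic section $\sigma:\SY\to\rz\SJ^r_{m,1}$ of $\rz\pi^r$, there is $f\in\dstrcm$ with $\rz j^rf|_\SY=\sigma|_\SY$. To apply this to our situation, I would use polysaturation (exactly as in the proof of Lemma \ref{lem:POU} and Lemma \ref{lem:first standard jet approx}) to choose a $\rz$-finite $\overline{\SX}\subset\rz\bbr^m$ with $\SX\subset\overline{\SX}$. The restriction of $\rz s$ to $\overline{\SX}$ is then an internal set-theoretic section of $\rz\pi^r$ over a $\rz$-finite set, so the transferred statement yields $f\in\dstrcm$ with $\rz j^rf|_{\overline{\SX}}=\rz s|_{\overline{\SX}}$. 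Restricting to $\SX$ gives the desired conclusion.

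The substantive step here is just the standard partition-of-unity construction; the rest is routine transfer. The only delicate point I anticipate is handling $\SX$ when it is not already internal — most notably the case $\SX=\ {}^\s\bbr^m$ — which is why one needs polysaturation to produce the $\rz$-finite enveloping $\overline{\SX}$. This is the same saturation hypothesis used repeatedly throughout the paper, so it introduces no new assumption.
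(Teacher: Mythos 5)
Your proof is correct and follows essentially the same route as the paper: transfer the finite jet-interpolation fact and invoke saturation to cover an external $\SX$ such as ${}^\s\bbr^m$. You supply details the paper omits — a concrete partition-of-unity construction for the standard interpolation statement (the paper merely calls it ``obvious''), and an explicit appeal to polysaturation to enclose $\SX$ in a $\rz$-finite set — so yours is the more careful rendition of the same argument, and you correctly flag the one real subtlety, namely that $\SX$ need not be internal.
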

\begin{proof}
    This just follows from the *transfer of the following obvious
    standard statement about jets. If
    $\{p_1,p_2,\ldots,p_l\}\subset\SJ^r_{m,1}$ such that
    $x_j=\pi^r(p_j)$ are all distinct. Then there exists $f\in\dcm$
    such that $j^r_{x_i}f=p_i$ for all i.
\end{proof}
    With these preliminaries, the proof of the following result is
    immediate.

\begin{theorem}
   Let $\SD\in NLDO_r$ and let
   $\la_{\SD}=\la\in C^{\infty}(\SJ^r_{m,1},\bbr)$ and $g\in\dstrcm$.
   Suppose that $(\rz\la,g)\in {}^\s PCP$. Then $\SD(f)=g$ has a generalized
   solution, $f$, in the sense of Todorov.
\end{theorem}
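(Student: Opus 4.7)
The proof plan is to assemble ${}^\s PCP$ with Lemmas \ref{lem:first standard jet approx} and \ref{lem:second standard jet approx} into a three-step construction: (i) produce an external ``section'' from the covering hypothesis; (ii) approximate it by an internal *smooth section of $\pi^r$; (iii) realize that section as the $r$-jet of an internal smooth $f\in\dstrcm$ along a *finite superset of ${}^\s\bbr^m$. The PCP hypothesis is already phrased as a pointwise solvability statement, so once the passage from pointwise data to an internal smooth interpolant is in place, the equation $\rz\SD(f)=g$ holds at standard points essentially by construction.

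Concretely, first invoke ${}^\s PCP$: for each $x\in\bbr^m$ choose a point $p_x\in\rz(\pi^r)^{-1}(\rz x)$ with $\rz\la(p_x)=g(\rz x)$. This determines an external set-theoretic ``section'' $s:{}^\s\bbr^m\to\rz\SJ^r_{m,1}$ given by $\rz x\mapsto p_x$. Next, following the strategy of Lemma \ref{lem:first standard jet approx}, fix (by saturation) a *finite $\SX\subset\rz\bbr^m$ with ${}^\s\bbr^m\subset\SX$, use Theorem \ref{thm:extend external map} (or a direct saturation argument) to extend $s$ to an internal set-theoretic map $\bar s$ defined on $\SX$, and then *transfer the elementary standard fact that any finite family of jet data $\{(x_j,p_j):p_j\in\SJ^r_{m,1,x_j},\,x_j \text{ distinct}\}$ is interpolated by a global smooth section of $\pi^r$. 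This yields an internal $\tl s\in\rz\G^{\infty}(\SJ^r_{m,1})$ with $\tl s(\rz x)=p_x$ for every $x\in\bbr^m$.

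Third, apply Lemma \ref{lem:second standard jet approx} to $\tl s$ and $\SX$ to obtain $f\in\dstrcm$ with $\rz j^rf|_\SX=\tl s|_\SX$. The final verification is immediate: for every $x\in\bbr^m$, since $\rz x\in\SX$,
\begin{align*}
   \rz\SD(f)(\rz x)=\rz\la(\rz j^r_{\rz x}f)=\rz\la(\tl s(\rz x))=\rz\la(p_x)=g(\rz x),
\end{align*}
so $f$ solves $\rz\SD(f)=g$ in Todorov's sense.

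The main obstacle is the second step, where one must upgrade an external, possibly nonstandard-valued choice function $s$ defined only on ${}^\s\bbr^m$ into an honest internal *smooth section of $\pi^r$. Lemma \ref{lem:first standard jet approx} is stated for \emph{standard} sections and therefore cannot be quoted verbatim; the workaround is to fuse Theorem \ref{thm:extend external map} (to secure an internal set-theoretic extension on a *finite $\SX\supset{}^\s\bbr^m$) with the *transfer of the standard finite jet-interpolation lemma, essentially rerunning the argument of Lemma \ref{lem:first standard jet approx} in this slightly broader setting. Once that adapted extension is in hand, Lemma \ref{lem:second standard jet approx} and the defining identity $\rz\la(p_x)=g(\rz x)$ from ${}^\s PCP$ combine with no further work.
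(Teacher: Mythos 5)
Your proof follows the same route as the paper's, and is in fact more careful: the paper's version asserts (in the preceding remark) that ${}^\s PCP$ is equivalent to the existence of an internal section $s\in\rz\G(\SJ^r_{m,1})$ and then appeals to Lemma \ref{lem:first standard jet approx} for this $s$, even though that lemma is stated only for standard $s\in\G(\SJ^r_{m,1})$. You correctly flag both points and close them by running the saturation argument (Theorem \ref{thm:extend external map}) to pass from the external choice function given by ${}^\s PCP$ to an internal extension on a *finite $\SX\supset{}^\s\bbr^m$, then re-running the *finite jet-interpolation transfer from the proof of Lemma \ref{lem:first standard jet approx}. The concluding chain $\rz\SD(f)(\rz x)=\rz\la(\rz j^r_{\rz x}f)=\rz\la(\tl s(\rz x))=\rz\la(p_x)=g(\rz x)$, which the paper leaves implicit, is also spelled out; so this is the paper's argument with the implicit steps made explicit.
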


\begin{proof}
    By the remark above,  $(\rz\la,g)\in {}^\s PCP$ is equivalent to
    the existence of an $s\in\rz\G(\SJ^r_{m,1},\bbr)$ such that for
    every $x\in\bbr^m$, $\la_{\rz\SD}(s(\rz x))=g(\rz x)$. But by
    Lemma \ref{lem:first standard jet approx}, there exists $\tl{s}\in\rz\G^{\infty}(\SJ^r_{m,1})$
    such that $\tl{s}(\rz x)=s(x)$ for all $x\in \bbr^m$. And by
    Lemma \ref{lem:second standard jet approx}, there exists
    $f\in\dstrcm$, such that for all $x\in\bbr^m$, $\rz
    j^r_{* x}(f)=\tl{s}(\rz x)$.
\end{proof}
    Todorov's existence result (being for linear operators only) is a special consequence of the
    previous development.
\begin{corollary}
    Suppose that $g\in\dstrcm$ and $P\in LPDO_r$ is such that $\la_P$ is nonvanishing
    on $\bbr^m$. Then there exists $f\in\dstrcm$, such that for all
    $x\in\bbr^m$, $P(f)(\rz x)=g(\rz x)$, ie., $f$ is a solution of
    $P(f)=g$ in the manner of Todorov.
\end{corollary}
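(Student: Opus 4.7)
The plan is to recognize that this corollary is essentially just the composition of the two main ingredients already established in this section: the lemma showing that nonvanishing linear symbols satisfy $PCP$, and the preceding theorem on existence of solutions for $PCP$-pairs. So the proof is a short unwinding of hypotheses rather than a fresh construction.

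First, I would translate the hypothesis into the language of $PCP$. Writing $P=\sum_{|\a|\leq r}f_\a\p^\a$, the symbol is $\la_P=\sum_{|\a|\leq r}f_\a y_\a$, and the assumption that $\la_P$ is nonvanishing on $\bbr^m$ is exactly the condition that at each $x_0\in\bbr^m$ some coefficient $f_\a(x_0)$ is nonzero, equivalently $\sum_{|\a|\leq r}|f_\a(x_0)|\not=0$. This is precisely the hypothesis of the lemma verifying that linear symbols with nonvanishing coefficient sum satisfy ${}^\s PCP$, so that lemma applies to the given $g\in\dstrcm$ to yield $(\rz\la_P,g)\in {}^\s PCP$.

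Second, with $(\rz\la_P,g)\in {}^\s PCP$ in hand, the main theorem of this section applies directly (taking $\SD=P\in NLDO_r$, since $LPDO_r\subset NLDO_r$). That theorem produces $f\in\dstrcm$ such that for every $x\in\bbr^m$,
\begin{align}
   \rz P(f)(\rz x)=\rz\la_P(\rz j^r_{*x}f)=g(\rz x),
\end{align}
which is the asserted pointwise equality on $^\s\bbr^m$, i.e., a generalized solution of $P(f)=g$ in the sense of Todorov.

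There is essentially no obstacle; the work has all been done. The only thing worth double-checking is the matching of notational conventions between ``linear operator of order $r$'' and the PCP-lemma formulation (that the symbol written in induced jet coordinates has at least one nonzero coefficient at every base point), but this is immediate from the definition of $\la_P$. Thus the corollary is simply the composition
\begin{align}
   (\text{$\la_P$ nonvanishing})\;\Longrightarrow\;(\rz\la_P,g)\in {}^\s PCP\;\Longrightarrow\;\text{existence of }f\in\dstrcm\text{ solving }\rz P(f)=g\text{ on }^\s\bbr^m.
\end{align}
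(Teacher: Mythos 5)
Your proposal is correct and is precisely the argument the paper intends when it dismisses the proof as ``This is clear'': the nonvanishing of $\la_P$ at each $x\in\bbr^m$ means some coefficient $f_\a(x)\ne 0$, which is the hypothesis of the preceding lemma giving $(\rz\la_P,g)\in {}^\s PCP$, and then the theorem for $NLDO_r$ (of which $LPDO_r$ is a subset) yields the desired $f\in\dstrcm$. The composition you write out is exactly the two-step reduction the author has in mind.
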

\begin{proof}
    This is clear.
\end{proof}
   Given the nonlinear setting of this section, proving results analogous to those in the linear sections appear to need much more involved preliminaries and so will be pursued at a later date.
   Nonetheless, it seems clear  that we can consider some general criteria
   revolving around when $(P,g)\in PCP$. In particular, it appears that we can
   prove {\it a universal existence theorem asserting that any possible space of
   generalized functions that has the  $PCP$ property is already
   contained in our nonstandard space}. This, too, will appear as time allows.

\section{Conclusion}
\subsection{Too many solutions?}
   In this paper I have used some of the machinery of the geometry of partial differential equations
   to explore the possibilities of the approach of Todorov. (We have yet to work through the nonlinear analogs of
   the linear results presented here; this will entail a much more extensive use of the the jet theory
   of nonlinear partial differential operators. Note even more starkly than in this paper; no counterpart in standard mathematics exists.)
   The implications of the results of this paper are still not clear. Yet one
   thing should be obvious, the class of internally smooth maps are
   remarkably `flabby', as compared to the standard world.

   As an indication of this, we have the following construction.
   let $\bsm{\SL}<\dstrcmn$ be the $\rz\bbr$ linear subspace of $\dstrcmn$ defined as follows.
   Let $\SY\subset\rz\bbr^m$ be a *finite subset such that
    $^\s\bbr^m\subset\SY$. Let $\omega\in\rz\bbn_{\infty}$. Then,
     the set  $\bsm{\SL}=\{f\in\dstrcmn:\rz\! j^{\om}_x f=0\;\text{for all}\;x\in\SY\}$ is a
    *cofinite dimensional subspace of \strcmn,  as this set of conditions
    on elements $f$ in \strcmn\; given by specifying the value of $j^{\om}_{x}f$, a *finite number of *Taylor coefficients at a *finite set of points in $\rz\bbr^m$, ie., the points of $\SY$ is *finite.
    Now, by construction,
   $\SL\cap\;^\s C^\infty(\bbr^m,\bbr^n)=\{0\}$, and
    we have the following diagram

\begin{align}
    \begin{CD}
         \SL\\
         @VjVV \\
         \dstrcmn  @>\text{ *$j^\om$}>> \rz \SJ^{\om}_{m,n}    @>\rho>>    \rz \SJ^{\om}_{m,n}|_{^\s\!\bbr^m}\\
         @AiAA \\
         \rz\bbr\otimes\;^\s C^\infty(\bbr^m,\bbr^n)
    \end{CD}
\end{align}
    where the maps $i$ and $j$ are $\rz\bbr$ subspace injections and $\rho$ is
    the highly external restriction to the fibers over $^\s\bbr^m$.
    Let $\Phi =\rho\circ\rz j^{\om}$. Then the following holds.
\begin{lemma}
   $\Phi|_{Im(j)}$ has image $\{0\}$ and $\Phi|_{Im(i)}$ is an injection.
\end{lemma}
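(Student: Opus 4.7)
The plan is to dispatch the two assertions separately, both proceeding essentially by unwinding definitions.

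For the first claim, take $f \in j(\SL)$, so $\rz j^\om_x f = 0$ for every $x \in \SY$. Since $^\s\bbr^m \subset \SY$ by construction, this forces $\rz j^\om_{\rz x} f = 0$ for every $x \in \bbr^m$; applying $\rho$ yields $\Phi(f) = 0$.

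For the second claim, the plan is to show that even the coarser map obtained by projecting $\rz j^\om$ to the $0$-jet (i.e.\ pointwise evaluation at standard points) is already injective on $\rz\bbr \otimes {}^\s C^\infty(\bbr^m,\bbr^n)$. Take $\sum_{i=1}^l c_i \otimes g_i$ in the tensor product and suppose $\Phi(i(\sum c_i \otimes g_i)) = 0$. Using tensor product identities one may replace the $g_i$ by an $\bbr$-basis of $\mathrm{span}_\bbr(g_1,\ldots,g_l)$, so we may assume $g_1,\ldots,g_l \in C^\infty(\bbr^m,\bbr^n)$ are $\bbr$-linearly independent; it then suffices to show each $c_i = 0$.

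The key input is a linear-algebra extraction: since the $g_i$ are $\bbr$-linearly independent smooth maps, the joint evaluation map $\mathrm{span}_\bbr(g_1,\ldots,g_l) \to \bbr^{\bbr^m \times \{1,\ldots,n\}}$ sending $g \mapsto (g^j(x))_{x,j}$ is injective, so by a standard rank argument there exist $x_1,\ldots,x_l \in \bbr^m$ and component indices $j_1,\ldots,j_l \in \{1,\ldots,n\}$ such that the $l \times l$ matrix $A = (g_i^{j_p}(x_p))_{p,i}$ is invertible over $\bbr$. The hypothesis $\Phi(\sum c_i \rz g_i) = 0$ gives in particular $\sum_i c_i \cdot \rz g_i(\rz x_p) = 0$ for each $p$ (apply $\pi^\om_0$ on each fiber over $^\s\bbr^m$); taking the $j_p$-th component yields $\sum_i c_i \rz(g_i^{j_p}(x_p)) = 0$, i.e.\ $\rz A \cdot (c_1,\ldots,c_l)^T = 0$ in $\rz\bbr^l$. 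By transfer, $\rz A$ is invertible over $\rz\bbr$, so each $c_i = 0$, as needed.

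The only real obstacle is the initial WLOG reduction to the $\bbr$-linearly independent case; this is routine tensor algebra but worth stating carefully so that the coefficients extracted are honest $\rz\bbr$-scalars attached to linearly independent standard functions, after which the linear algebra over $\bbr$ together with transfer finishes the argument.
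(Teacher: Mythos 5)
Your proof of the first assertion coincides with the paper's; both are immediate from the definition of $\SL$ since ${}^\s\bbr^m \subset \SY$.

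For the second assertion, your proof supplies a step that the paper leaves as an unjustified assertion. The paper verifies injectivity only for a single standard function: if $f \in {}^\s C^\infty(\bbr^m,\bbr^n)$ with $\Phi(f)=0$, the vanishing of $\rz j^\om_{\rz x}(f)$ at each standard $x$ forces $j^\infty_x f = 0$ for all $x\in\bbr^m$, hence $f=0$. It then simply declares that this ``therefore holds'' for all elements of $\rz\bbr\otimes{}^\s C^\infty(\bbr^m,\bbr^n)$, with no argument for how the conclusion propagates from pure standard functions to genuine $\rz\bbr$-linear combinations. Passing to $\sum_i c_i\,\rz g_i$ with nonstandard scalars $c_i\in\rz\bbr$ attached to standard functions $g_i\in C^\infty(\bbr^m,\bbr^n)$ is not automatic, and this is precisely the gap your rank argument closes: reduce to $\bbr$-linearly independent $g_i$ by routine tensor algebra, use the classical fact that linear independence of smooth maps yields finitely many evaluation pairs $(x_p,j_p)$ making $A=(g_i^{j_p}(x_p))_{p,i}$ invertible over $\bbr$, then transfer to get $\rz A$ invertible over $\rz\bbr$, so that $\rz A\cdot(c_1,\ldots,c_l)^T=0$ forces every $c_i=0$. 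This is correct and complete. Your version is also more economical than what the paper sketches: you need only the $0$-jet components of $\Phi$ (pointwise evaluation at standard points), whereas the paper appeals to the full $\om$-jet and the fact that it determines $j^\infty_x f$ at standard $x$.
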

\begin{proof}
    By construction, we have $\Phi(f)=0$ for every $f\in\SL$. On the other hand, if for any element
    $f\in\; \dcgcmn$ we have $\rz\!j^{\om}_x(\rz f)=0$ for each
    $x\in\SY$, we have in particular that $j^{\infty}_x f=0$, for
    each $x\in\bbr^m$, that is $f=0$. This therefore holds for all
    $f\in\rz\bbr^m\underset{\bbr^m}{\otimes}\dcgcmn$.
\end{proof}
    So we have that the subspace of elements of\; \strcmn \; whose $^\s$\! infinite *jet vanishes everywhere
    on $^\s\bbr^m$ is all of \;\strcmn\; up to a *finite dimensional subspace containing all
    standard smooth maps. It should therefore be clear that we have the immediate corollary that exemplifies the ability to
    bend almost all of \strcmn \; away from contact with the world
    of  standard differential equations, at least at standard points.
\begin{corollary}
    If $P\in NPDO_r$ for any $r\in\bbn$ such that $P(\text{zero map})= \text{zero map}$, then $\rz P(f)(\rz x)=0$ for all $f\in\SL$ and all $x\in\bbr^m$.
\end{corollary}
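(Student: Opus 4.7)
The plan is to reduce the claim to a purely jet-theoretic statement and then apply transfer. Recall that an element $P\in NPDO_r$ is described by a (possibly nonlinear) smooth bundle map $\la_P:\SJ^r_{m,1}\ra\bbr_m$ via the identity $P(f)(x)=\la_P(j^r_xf)$, and by transfer the analogous identity $\rz P(f)(\rz x)=\rz\la_P(\rz j^r_{*x}f)$ holds for internal $f\in\dstrcm$ and every $\rz x\in\rz\bbr^m$. So the assertion $\rz P(f)(\rz x)=0$ will follow as soon as we show that, for each standard $x$ and each $f\in\SL$, the internal $r$-jet $\rz j^r_{*x}f$ is the zero jet and that $\rz\la_P$ sends zero jets to zero.

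First, I would argue that $\rz j^r_{*x}f=0$ for every $f\in\SL$ and every $x\in\bbr^m$. By construction of $\SL$ we have $\rz j^\om_y f=0$ for each $y\in\SY$, and since $^\s\bbr^m\subset\SY$ this applies in particular to every $\rz x$ with $x$ standard. Because $r\in\bbn$ is a standard finite integer and $\om\in\rz\bbn_\infty$ is *infinite, we have $r\leq\om$, so the projection $\rz\pi^{\om}_r:\rz\SJ^\om_{m,1,*x}\ra\rz\SJ^r_{m,1,*x}$ (the transfer of the standard forgetful projection) sends the zero $\om$-jet to the zero $r$-jet. Hence $\rz j^r_{*x}f=\rz\pi^\om_r(\rz j^\om_{*x}f)=0$.

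Next I would translate the hypothesis $P(\mathbf{0})=\mathbf{0}$ into a pointwise statement about the symbol. Writing $\mathbf{0}\in\dcm$ for the zero function, the equality $P(\mathbf 0)(x)=\la_P(j^r_x\mathbf 0)=\la_P(0_x)$ for every $x\in\bbr^m$ (where $0_x$ is the zero jet at $x$) shows that $\la_P$ annihilates the zero section of $\SJ^r_{m,1}$. This is a standard statement, so by transfer $\rz\la_P(0_{*x})=0$ for every $\rz x\in\rz\bbr^m$, and in particular for every standard $x$. Combining with the previous paragraph, we conclude $\rz P(f)(\rz x)=\rz\la_P(\rz j^r_{*x}f)=\rz\la_P(0_{*x})=0$ for every $f\in\SL$ and every $x\in\bbr^m$.

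There is essentially no obstacle here beyond bookkeeping: the whole argument is the conjunction of three elementary observations (standardness of $r$ versus infiniteness of $\om$, the symbol representation of $P$, and transfer of $\la_P(0_x)=0$). The only place one might pause is in being careful that $\rz P(f)$ is given by $\rz\la_P\circ\rz j^r f$ even for \emph{internal} $f\in\dstrcm$, but this is immediate from transferring the defining identity of $NPDO_r$. No saturation, no partition of unity, and no appeal to the earlier lifting lemmas is needed; the corollary is really just a restatement of the fact that on all of $\SL$ the $r$-jet at standard points is already zero, together with $P(\mathbf 0)=\mathbf 0$.
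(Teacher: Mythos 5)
Your argument is correct and follows essentially the same route as the paper: factor $\rz P$ through $\rz\la_P$ and the $\om$-jet, use that $\rz j^\om_{*x}f=0$ for $f\in\SL$ and standard $x$, and project down via $\rz\pi^\om_r$. You make explicit the final use of the hypothesis $P(\mathbf 0)=\mathbf 0$ (via transfer of $\la_P(0_x)=0$), which the paper's one-line proof leaves implicit; otherwise the two arguments coincide.
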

\begin{proof}
    All classical differential operators $P$ of order $r$, factor as $\rz P=\rz\la_P\circ \rz j^r=\rz\la_P\circ\rz\pi^{\om}_r\circ
    \rz j^{\om}$ and by above $j^{\om}(\SL)|\;^\s\bbr^m =\{0\}$.
\end{proof}
    {\it That is, all classical partial differential operators sending the zero map to the zero map operate as zero maps on ``almost all'' of \;\strcmn\;.}

    One perspective on the results here should not be a surprise: that *smooth functions (and with some thought *analytic functions) are far too flabby on a full infinitesimal scale. From a positive viewpoint, one could see how this might allow an investigator to have wide latitude in `Tayloring' generalized functions (on the monadic level) to get appropriate rigidities-growth or to test various empirical results by infinitesimal adjustings of singular parameters. The  remark (in the introduction) with respect to the work of Baty, etal, see eg., \cite{BatyShockWave2008} seems relevant to the second perspective. The algebras of Oberguggenberger and Todorov, \cite{OT98} and the further developments in eg., Todorov and Vernaeve, \cite{TodorVern2008} seems to be good examples of the Tayloring capacities.

\subsection{Prospects and goals}
   Only the rudiments of jets on the one hand, and nonstandard
   analysis, on the other have been deployed in this paper.
   In follow up articles we intend to use (*transferred) tools from
   smooth function theory along with a more extensive use of jet theory to extend
   both the linear and nonlinear existence results.  Further, deploying more
   nuanced version of the jet material of section \ref{section: standard jet work,prolong and
   rank} over certain types of infinite points in the jet fibers, we
   intend to prove results on regularity of solutions of partial
   differential operators, linear or nonlinear, whose symbols satisfy certain properness
   conditions. Our first paper along this line, \cite{McGafRegSolnNPDE}, gives a regularity theorem for a broad class of nonlinear differential operators.
   We also intend to extend the results here to
   include the results of Akiyama, \cite{AkiyamaNSSolvabilityOpsOnVBs} into the framework
   established here in the manner we have included the results of Todorov.
   The method is by an extension from internal mapping with *finite support
   to internal smooth modules of bundle sections with *finite support.
   Furthermore, as noted in the introduction, we will refine the arguments in this paper to Todorov's nonstandard Colombeau algebras.
   Given that all of the usual constructions on the symmetries of
   differential equations (as in eg., Olver, \cite{Olver1993}) are
   straightforwardly lifted to the nonstandard universe, we are also
   looking into developing a theoretic framework on generalized
   symmetries (eg., shock symmetries) of differential equations, continuing within  the jet
   theoretic framework begun here.



\bibliographystyle{amsplain}
\bibliography{nsabooks}

\end{document}